\documentclass[10pt,reqno]{amsart}
\usepackage{palatino}
\usepackage{amsmath,amssymb,mathrsfs}

\newtheorem{thm}{Theorem}
\newtheorem{lemma}[thm]{Lemma}

\newtheorem{prop}[thm]{Proposition}
\newtheorem*{thm A}{ Theorem A}
\newtheorem*{thm B}{ Theorem B}

\newcommand{\N}{{\mathbb N}}
\newcommand{\Z}{{\mathbb Z}}
\newcommand{\C}{{\mathbb C}}
\newcommand{\s}{{\mathbb S}}
\newcommand{\R}{{\mathbb R}}

\newcommand{\T}{{\mathbb T}}
\newcommand{\D}{{\mathbb D}}

\newcommand{\be}{\mathcal{B}}

\newcommand{\HH}{\mathcal H}

\newcommand{\CC}{\mathcal C}
\newcommand{\RR}{\mathcal R}
\newcommand{\Sl}{\mathcal S}
\newcommand{\mm}{d\mu_{m,\alpha}(\zeta)}

\newcommand{\BB}{\mathcal B}

%%%%%%%%%%%%%%%DOCUMENT STARTS HERE%%%%%%%%%%%%%%

%\scrollmode

\begin{document}

\title[Hankel operators on vector-valued generalized Fock spaces on $\C^d$]{Small Hankel operators on vector-valued generalized Fock spaces on $\C^d$}

\author{H. Bommier-Hato} 

%\address{Bommier-Hato: Aix-Marseille Universit\'e, I2M UMR CNRS 7373, 39~Rue F.~Joliot-Curie,
%13453~Marseille Cedex~13, France}
%\email{ helene.bommier@gmail.com}
\address{Bommier-Hato: Faculty of Mathematics,
University of Vienna,
Oskar-Morgenstern-Platz 1, 1090 Vienna, Austria } \email{helene.bommier@gmail.com}

\thanks{The author was supported by the FWF project P 30251-N35.}

\subjclass[2010]{Primary 47B35 and 30H20, secondary 30H10 and 46E22}

\keywords{Vector-valued Fock spaces, Hankel operators.}

\maketitle

%%%%%%%%%%%%%%%%%%begining %%%%%%%%%%%%%%%%%%%%%%%%%%%%%%%%%%%%%
 
\begin{abstract} 
We study  small Hankel operators $h_b$ with operator-valued holomorphic symbol $b$ on a class of vector-valued Fock type spaces. We show that the boundedness / compactness of $h_b$ is equivalent to the membership of $b$ to a specific growth space, which is described via  a Littlewood-Paley type condition and  a Bergman type  projection, and estimate the norm of $h_b$. We also establish some properties of duality and density for these Fock spaces.

\end{abstract}

\section{Introduction}
Due to its numerous applications, the theory of  Hankel operators has been developped in various directions for the last decades. For instance, Hankel operators on the Hardy space are related to control theory, and the vectorial setting is used in non-commutative analysis, such as the theory of approximation by analytic 
matrices (see \cite{Nik,Pell2}). Small Hankel operators with analytic symbols on vector-valued  spaces have been studied  on Bergman spaces \cite{AlCo,Co}, and  on weighted Dirichlet spaces of the unit disk of $\C$ \cite{AlPe}.  Though the boundedness and the compactness  are characterized in a similar way  for scalar and  vectorial functions, namely the membership of the symbol to some Bloch space,  the vectorial case brings new difficulties.\\
In the context of entire functions, the most classical framework is the Segal-Bargmann space $F^{2}_{\alpha}=F^{2}_{\alpha}(\C^d,\C)$ of Quantum Mechanics (see \cite{JPR,Zf}), which is the space of all entire functions $f:\C^d\rightarrow\C$, $d\geq 1$, such that $\left|f\right|^2$ is integrable with respect to the
Gaussian 
$$d\mu_{\alpha}(z):=\left(\frac{\alpha}{\pi}\right)^d e^{-\alpha\left|z\right|^2}dv(z),$$
where $dv(z)$ stands for the Lebesgue volume on $\C^d$ and $\alpha>0$ \cite{Foll,JPR,Zf}. 
 
 Given real numbers $m\geq 1$, $\alpha>0$,  we consider the probability measure on $\C^d$
\begin{equation}\label{dmu cm alpha}
d\mu_{m,\alpha}(\zeta):=c_{m,\alpha}e^{-{\alpha}\left|\zeta\right|^{2m}}dv(\zeta),\ \text{where  }
	\ c_{m,\alpha}=\frac{m\alpha^{d/m}}{\pi^d}\frac{\Gamma(d)}{\Gamma\left(\frac{d}{m}\right)}. 
\end{equation}
%\begin{align}\label{dmu cm alpha1}
	%d\mu_{m,\alpha(\zeta)}&:=c_{m,\alpha}e^{-{\alpha}\left|\zeta\right|^{2m}}dv(\zeta), \ m\geq 1,\  \alpha>0,\\
	%\text{with  }c_{m,\alpha}&=\frac{m\alpha^{d/m}}{\pi^d}\frac{\Gamma(d)}{\Gamma\left(\frac{d}{m}\right)}. 
%\end{align}
For a complex Banach space $Y$, and $1\leq p \leq\infty$, $L^{p}_{m,\alpha}( Y):=L^{p}_{m,\alpha}(\C^d, Y)$ is the space of  $Y$-valued strongly measurable functions $f$ on $\C^d $ such that
\begin{gather*}
%\label{norm Fp Y}
	\left\|f\right\|_{L^{p}_{m,\alpha}( Y)}^p:=c_{m,\alpha}\int_{\C^d}\left\|f(\zeta) e^{-\frac{\alpha}{2}\left|\zeta\right|^{2m}}\right\|_Y^pdv(\zeta)<\infty,\ 1\leq p<\infty,\\
	\left\|f\right\|_{L^{\infty}_{m,\alpha}( Y)}:=\text{ess} \sup_{\zeta\in \C^d}\left\|f(\zeta) e^{-\frac{\alpha}{2}\left|\zeta\right|^{2m}}\right\|_Y,
\end{gather*}
and  $F^{p}_{m,\alpha}( Y):=F^{p}_{m,\alpha}(\C^d, Y)$ is the space of  entire functions which are in $L^{p}_{m,\alpha}( Y)$, equipped with the same norm. When $Y =\C, $ we
shall simply denote these spaces by $L^{p}_{m,\alpha}$ and $F^{p}_{m,\alpha}$.\\
Moreover, $F^{\infty,0}_{m,\alpha}( Y)$
is defined as  the space of entire functions $f$ such that
\begin{equation*}
%\label{F infty 0}
	\lim_{|\zeta|\rightarrow +\infty}\left\|f(\zeta)\right\|_Y e^{-\frac{\alpha}{2}\left|\zeta\right|^{2m}}=0.
\end{equation*}
 
In this paper,  $\HH$ is a separable Hilbert space. We are interested in Hankel operators on the vector-valued Fock type spaces   $F^{p}_{m,\alpha}(\C^d, X)$, when $X$ is the  space $\HH$ or a Schatten-class ideal, and the symbol is an entire operator-valued function. In the scalar case, Hankel operators on $F^{2}_{\alpha}$ have been considered in \cite{JPR,Zf}
%, and on $F^{p}_{2l,\alpha}(\C, \C)$ for  an integer $l$  in the preprint \cite{Cas}
 (our definition of $h_b$ may differ from theirs up to unitary operators). \\
To our knowledge, the vectorial setting has not yet been considered.
%it is likely that their methods could be adapted to the scalar  space $F^{p}_{m,\alpha}(\C, \C)$.
 It is worthwhile mentioning that the methods used in the scalar case  do not apply when the target space $X $ has infinite or finite dimension $\geq 2$. Besides, for general $m> 1$, the framework $F^{p}_{m,\alpha}(\C^d, X)$,  $d\geq 2$ is more involved than the case of one variable, and $m=1$. Moreover, we  also study the growth of the  operator norm of $h_b$  with respect to $m$  (see details below).\\

%we also show that the operator norm of $h_b$ grows as a power function of $m$

% $\frac{1}{p}+\frac{1}{p'}=1$, with the convention $\frac{1}{\infty}=0$)\\
Pointwise estimates (see Proposition \ref{pointwise Fp}) imply that $F^{p}_{m,\alpha}( Y)$ is a Banach space. For $1\leq p \leq\infty$,  $p'$ is the conjugate exponent of $p$, satisfying $\frac{1}{p}+\frac{1}{p'}=1$.
 The duality  $\left\langle .,.\right\rangle_{Y-Y^*}$, between    $Y$ and its  dual space  $Y^*$,  gives rise to the natural duality (see Proposition \ref{dual Fp 1 leq p})
\begin{equation*}
%\label{dual alpha}
 \left\langle x,y\right\rangle_{\alpha}:= \int_{\C^d}\left\langle x(\zeta),y(\overline \zeta)\right\rangle_{Y-Y^*}d\mu_{m,\alpha}(\zeta),\ x\in F^{p}_{m,\alpha}( Y),\ y\in F^{p'}_{m,\alpha}( Y^*).
\end{equation*} 

 %$$\left|\left\langle x,y\right\rangle_{\alpha}\right|\leq
%\left\|x\right\|_{F^{p}_{m,\alpha}(\C^d,Y )}\left\|y\right\|_{F^{p'}_{m,\alpha}(\C^d, Y^*)}$$
 %whenever $x\in F^{p}_{m,\alpha}(\C^d, Y)$ and $y\in F^{p'}_{m,\alpha}(\C^d, )$, .
In particular, $F^{2}_{m,\alpha}( \HH)$ is a Hilbert space, with inner product 
%$\left\langle .,.\right\rangle_{\alpha}$,
\begin{equation*}
%\label{inner F2}
 \left\langle x,y\right\rangle_{\alpha}:= \int_{\C^d}\left\langle x(\zeta),y(\zeta)\right\rangle_{\HH}d\mu_{m,\alpha}(\zeta),\ x,y\in F^{2}_{m,\alpha}(\HH),
\end{equation*}  
%\begin{equation}\label{inner F2}
and the orthogonal projection $P_{\alpha}:L^{2}_{m,\alpha}(\HH)\rightarrow  F^{2}_{m,\alpha}( \HH)$ is defined by
\begin{equation*}
%\label{P alpha}
	P_{\alpha}x(z)=\int_{\C^d}x(\zeta)K_{m,\alpha}(z,\zeta)d\mu_{m,\alpha}(\zeta),\ x\in L^{2}_{m,\alpha}( \HH),\  z\in\C^d,
\end{equation*}
 where  $K_{m,\alpha}$ denotes the  reproducing kernel in $F^{2}_{m,\alpha}$.\\

For a Banach space $X$, let $\be(X)$ be the set of all bounded operators on $X$. 

If $b$ is in $F^{\infty}_{m,\alpha}(\be(\HH) )$, the \textit{ small Hankel operator} $h_b$ is defined via the Hankel form
\begin{equation*}
%\label{Hankel form}
	\left\langle h_b x,y \right\rangle_{\alpha}=\int_{\C^d}\left\langle b(\zeta)x(\overline\zeta), y(\zeta)\right\rangle_{\HH}d\mu_{m,\alpha}(\zeta),
\end{equation*}
or via the projection $P_{\alpha}$
\begin{equation}\label{Hankel op}
h_b x=P_{\alpha}\left(bJx\right),
\end{equation}
where $x,y$ are $\HH$-valued polynomials and $(Jx)(z)=x(\overline z)$, $z\in\C^d$. \\
 
Our aim is to  characterize the boundedness and compactness of  small vectorial Hankel operators   on $F^{p}_{m,\alpha}( X)$, where $X$ is the Hilbert space $\HH$, or the Schatten class $\Sl^p(\HH)$.  For  a general Banach space  $Y$,  we   describe some density and duality results for the spaces $F^{p}_{m,\alpha}( Y)$, $1\leq p \leq\infty$ (section \ref{prelim}). Apart from being of independent interest, those properties enable us to give necessary and sufficient conditions for a  Hankel operator $h_b$ to be bounded (section \ref{boundedness}), or compact (section \ref{compact}) on $F^{p}_{m,\alpha}( X)$, when the symbol $b$ is a $\be(\HH)$-valued entire function.  In addition, we use precise estimates for kernel functions to show that 
%(? when the parameter $\alpha$ is bounded away from $0$),
 the norm of $h_b$ is equivalent to the norm $\left\|b\right\|_{F^{\infty}_{m,{\alpha}/{2}}( \be(\HH))}$ (section \ref{boundedness}). Moreover, we  give a characterization of the growth spaces  $ F^{\infty}_{m,\frac{\alpha}{2}}( Y) $ and $ F^{\infty,0}_{m,\frac{\alpha}{2}}( Y) $ involving a Bergman-type projection, and  a Littlewood-Paley type condition (Propositions \ref{charact F infty} and \ref{charact F infty 0}).\\

 We also consider the Hankel operator of symbol $T(b):\C^d\rightarrow\BB(\BB(\mathcal{H}))$  defined by
\begin{equation*}
\label{def tb1}
T(b)(z)S=b(z)S,\ S \in\BB(\mathcal{H}). 
\end{equation*}
 
For $1\leq p<\infty$, $\Sl^p=\Sl^p(\HH)$ denotes  the Schatten-von Neumann class, consisting in all operators $T$ on $\HH$  such that the sequence of the singular values of $\left|T\right|$ are in $l^p$.   $\Sl^{\infty}=\BB(\HH)$), and $\mathcal{K}(\HH)$ is the set of all compact operators on $\HH$. For the duality $\Sl^p-\Sl^{p'}$, we will use the notation
\begin{equation*}
%\label{trace}
	\left\langle A,B\right\rangle_{\text{tr}}=\text{tr}\left(AB^{*}\right),\ A\in\Sl^p,\ B\in\Sl^{p'}. 
\end{equation*}
%For $1\leq p<\infty$, 
On another hand, $G^p$ stands for one of the spaces 
\begin{equation}\label{Gp}
	G^p:=F^{p}_{m,\alpha}(\mathcal{S}^q(\mathcal{H})),\text{ where }q\in\left\{p,2\right\}.
\end{equation}
%We set 

Throughout the paper, $X,Y$ are complex Banach spaces, and $\alpha,\beta$  are positive real numbers. The norm on $\C^d$ is denoted simply by $\left|.\right|$, and the standard inner product by $\left\langle .,.\right\rangle$. The letter $C$ will stand for a positive constant, which may change from line to line, and whose dependence on parameters will be made precise if needed. For two functions $f, g$, the notation $f=O(g)$ or $f \lesssim g$,  means that there exists a constant $C$ such that $f\leq C g$ .
 If  $f=O(g)$ and $g=O(f)$, we  write $f\asymp g$.\\

  The following theorem characterizes the boundedness of the Hankel operator $h_b$.
%	We are also interested in the dependence of $\left\|h_b\right\|$ and $\left\|h_{T(b)}\right\|$ on the parameter $m$,
	Assuming
\begin{equation*}
%\label{alpha borne}
	m\geq 1\text{ and }0<\alpha_0 \leq\alpha\leq\alpha_1,
\end{equation*}
for some positive constants $\alpha_0 $, $\alpha_1$, we also show that the ratio $\left\|h_b\right\|^{-1}  \left\|b\right\|_{ F^{\infty}_{m,\frac{\alpha}{2}}(\C^d, \be(\HH))}$ grows as a power function of $m$.

%Our main result is stated as follows.
\begin{thm A}\label{thm A}
Suppose $1\leq p$, and $b\in F^{\infty}_{m,\alpha}(\C^d, \be(\HH))$. The following statements are equivalent:
\begin{enumerate}
	\item [(a)] $b\in F^{\infty}_{m,\frac{\alpha}{2}}(\C^d, \be(\HH))$;
	\item [(b)] $h_b$ is bounded  on $F^{p}_{m,\alpha}(\C^d, \HH)$;
	\item [(c)] $h_{T(b)}$ is bounded  on $G^p$.
	%$F^{p}_{m,\alpha}(\C^d, \Sl^p(\HH))$.
	%\item $h_{T(b)}$ is bounded  on $F^{p}_{m,\alpha}(\C^d, \Sl^2(\HH))$.
\end{enumerate}
%If $\left\|h_{T(b)}\right\|$ stands for the norm of $h_{T(b)}$ on on $F^{p}_{m,\alpha}(\C^d, \Sl^p(\HH))$ (resp. on $F^{p}_{m,\alpha}(\C^d, \Sl^2(\HH))$), 
Moreover, the quantities $\left\|b\right\|_{ F^{\infty}_{m,\frac{\alpha}{2}}(\C^d, \be(\HH))}$,  $\left\|h_b\right\|$,  $\left\|h_{T(b)}\right\|_{\be(G^p)}$ are comparable; more precisely
$$ C m^{-d}\left\|b\right\|_{ F^{\infty}_{m,\frac{\alpha}{2}}(\C^d, \be(\HH))}\leq \left\|h_b\right\| \leq \left\|h_{T(b)}\right\|_{\be(G^p)} \leq 2^{d}\left\|b\right\|_{ F^{\infty}_{m,\frac{\alpha}{2}}(\C^d, \be(\HH))},$$
where the  constant $C$  is independent of $m$.
% and
%\begin{equation}\label{sigma}
%	\sigma_p:=\max\left(1,\frac{d+1}{p}\right).
%\end{equation}
\end{thm A}
A "little oh" version of condition (a) from Theorem A characterizes the compactness of $h_b$ (see Theorem B, section \ref{compact}).

The proofs of Theorems A and B  also provide a characterization of functions in   $ F^{\infty}_{m,\frac{\alpha}{2}}( Y) $ and $ F^{\infty,0}_{m,\frac{\alpha}{2}}( Y) $, for any Banach space $Y$. Let us denote by $\mathcal{C}_{b}( Y)$ 
%(resp. $\mathcal{C}_{0}( Y)$) 
the space of all   $Y$-valued bounded functions on $\C^d$. 
%(resp. tend to $0$ as $\left|z\right|\rightarrow\infty$). 
The radial derivative of a holomorphic function  $f$ on $\C^d$ is
\begin{equation*}\label{}
	\RR f(z)=\sum^{d}_{j=1}z_j \frac{\partial f}{\partial z_j}(z),\ z\in\C^d.
\end{equation*}
\begin{prop}\label{charact F infty}
Let $\beta>0$, and suppose that $b$ is an $Y$-valued entire function. Then the following conditions are equivalent:
\begin{enumerate}
	\item [(a)] $b$ is in $F^{\infty}_{m,\beta}(Y)$; 
	 \item [(b)] There exists $c\in \mathcal{C}_{b}( Y)$ such that $b=P_{2\beta}c$;
	\item  [(c)] $\left(1+\left|\zeta\right|^{2m}\right)^{-k}\RR^kb(\zeta)$ is in  $L^{\infty}_{m,\beta}( Y)$ for every (some) nonnegative integer $k$.
\end{enumerate}
 In addition, we  have the equivalence of  norms 
\begin{equation*}
%\label{FSob infty}
	\left\|b\right\|_{F^{\infty}_{m,\beta}( Y)}\asymp\sup_{\zeta\in\C^d}\left(1+\left|\zeta\right|^{2m}\right)^{-k} \left\| \RR^kf(\zeta)\right\|_Ye^{-\frac{\beta}{2}\left|\zeta\right|^{2m}}+\sum^{k-1}_{l=0}\left\|\RR^{l}f(0)\right\|_Y.
\end{equation*}
%$\left\|b\right\|_{F^{\infty}_{m,\beta}(\C^d,Y)}$ and  $\left\|b\right\|_{\FF^{k,\infty}_{m,\beta}(\C^d, Y)}$ are all equivalent.
\end{prop}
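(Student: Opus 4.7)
The plan is to establish the cycle $(b)\Rightarrow(a)\Leftrightarrow(c)\Rightarrow(b)$, tracking constants along each implication to obtain the displayed norm equivalence. For $(b)\Rightarrow(a)$, I would pull the $Y$-norm under the integral defining $P_{2\beta}c$ and reduce the claim to the Schur-type kernel bound
\[ \int_{\C^d} |K_{m,2\beta}(z,\zeta)|\,d\mu_{m,2\beta}(\zeta)\lesssim e^{\beta|z|^{2m}/2}. \]
For $m=1$ this is an exact Gaussian computation (complete the square in $\zeta$ around $\zeta=z/2$); for general $m\geq 1$ the bound follows from the pointwise kernel estimates for generalized Fock spaces stated earlier in the paper. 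This directly yields $\|P_{2\beta}c\|_{F^\infty_{m,\beta}(Y)}\lesssim\|c\|_\infty$.

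For $(a)\Leftrightarrow(c)$, the forward direction is a Cauchy estimate on polydiscs of radius $r_\zeta\asymp (1+|\zeta|)^{1-2m}$ centered at $\zeta$, the radius being chosen so that $e^{\beta|w|^{2m}/2}$ remains comparable to $e^{\beta|\zeta|^{2m}/2}$ across the polydisc. This yields $\|\partial^\gamma b(\zeta)\|_Y\lesssim (1+|\zeta|)^{|\gamma|(2m-1)}e^{\beta|\zeta|^{2m}/2}\,\|b\|_{F^\infty_{m,\beta}(Y)}$, and expressing $\RR^k b$ as a linear combination of monomials $\zeta^\gamma\partial^\gamma b$ with $|\gamma|\leq k$ gives (c). The converse integrates the identity $\tfrac{d}{dt}b(t\zeta)=\tfrac{1}{t}\RR b(t\zeta)$ (with a $k$-fold Taylor-type extension for $k\geq 2$) along $t\in[0,1]$; the assumed growth of $\RR^k b$ controls the integrand, and the boundary values explain the additive $\sum_{\ell<k}\|\RR^\ell b(0)\|_Y$ terms on the right-hand side of the claimed norm.

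The principal obstacle is $(a)\Rightarrow(b)$: producing a \emph{bounded} $c\in\CC_b(Y)$ with $b=P_{2\beta}c$. The naive candidate $c(\zeta)=b(\zeta)e^{-\beta|\zeta|^{2m}}$ is bounded but does not satisfy $P_{2\beta}c=b$ (the extra weight disrupts the reproducing identity), and the reproducing formula $b=P_{2\beta}b$, valid via $F^\infty_{m,\beta}(Y)\hookrightarrow F^2_{m,2\beta}(Y)$, involves the unbounded symbol $b$ itself. I would use an atomic decomposition: select a lattice $\{z_n\}\subset\C^d$ sufficiently dense in the metric induced by the kernel of $F^2_{m,2\beta}$, expand
\[ b=\sum_n \lambda_n\,K_{m,2\beta}(\cdot,z_n)/\|K_{m,2\beta}(\cdot,z_n)\|_{L^2_{m,2\beta}} \]
with $(\lambda_n)\subset Y$ satisfying $\sup_n\|\lambda_n\|_Y\lesssim\|b\|_{F^\infty_{m,\beta}(Y)}$, and set $c:=\sum_n\lambda_n\chi_n/\|K_{m,2\beta}(\cdot,z_n)\|_{L^2_{m,2\beta}}$ for bump functions $\chi_n$ of bounded overlap supported near $z_n$, tuned so that $P_{2\beta}\chi_n$ approximates $K_{m,2\beta}(\cdot,z_n)$ up to a small error absorbed by a Neumann-series perturbation. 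Bounded overlap then yields $\|c\|_\infty\lesssim\sup_n\|\lambda_n\|_Y$, completing (b). The hard part is the atomic decomposition itself, which for $m>1$ and $d\geq 1$ rests on sharp off-diagonal estimates for the non-explicit kernel $K_{m,2\beta}$; extending the decomposition from scalar- to $Y$-valued $b$ is a routine notational upgrade once the scalar case is in hand.
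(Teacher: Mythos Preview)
Your cycle $(b)\Rightarrow(a)$ and $(c)\Rightarrow(a)$ match the paper, and your Cauchy-estimate approach to $(a)\Rightarrow(c)$ is a valid alternative to the paper's method (which instead differentiates the reproducing formula $b=P_\beta b$ under the integral and invokes the kernel asymptotics of Lemma~\ref{int blp beta}). Either route works here.

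The real discrepancy is $(a)\Rightarrow(b)$. Your atomic-decomposition plan is a substantial detour and, as you acknowledge, leaves a genuine gap: atomic decompositions for $F^\infty_{m,\beta}$ with $m>1$ are not available in the paper and rely on off-diagonal kernel estimates you would have to prove from scratch. The paper avoids all of this with a one-line dilation trick that you nearly found. You correctly rejected $c(\zeta)=b(\zeta)e^{-\beta|\zeta|^{2m}}$, but the fix is not atomic decomposition---it is to dilate first. Set $\alpha=2\beta$ and take
\[
c(\zeta)=2^{d/m}\,b(2^{1/m}\zeta)\,e^{-\alpha|\zeta|^{2m}}.
\]
Since $b\in F^\infty_{m,\alpha/2}(Y)$ gives $\|b(2^{1/m}\zeta)\|_Y\lesssim e^{\frac{\alpha}{2}|\zeta|^{2m}}$, this $c$ is bounded. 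The kernel scales as $K_{m,2\alpha}(z/2^{1/m},\zeta)=K_{m,\alpha}(z,\zeta)$ (both equal $C_mE^{(d-1)}_{1/m,1/m}(\alpha^{1/m}\langle z,\zeta\rangle)$), so applying the reproducing formula for $b(2^{1/m}\cdot)$ at the point $z/2^{1/m}$ and comparing measures yields $b=P_\alpha c=P_{2\beta}c$ exactly. No lattice, no Neumann series, no perturbation argument.
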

%In section we focus on the case of the Segal-Bargmann space , and use a weak factorization to interpret our results in terms of duality . 
Similarly, Proposition \ref{charact F infty 0} characterizes the membership in $F^{\infty,0}_{m,\beta}(Y)$ with a  "little oh" version of condition (b) and (c) from Proposition \ref{charact F infty}.

Thus $F^{\infty}_{m,\beta}(\be(\HH))$ (resp. $F^{\infty,0}_{m,\beta}(\mathcal{K}(\HH))$) can be viewed as a space of symbols of  bounded (resp. compact) Hankel operators, being the analog of  the Bloch space (resp. little Bloch space) of the disc $\D$ (see \cite{Zdisc}), or the unit ball in $\C^d$ (see  \cite{Zhankel} ).
 \\
The equivalence of norms in Proposition \ref{charact F infty} is a  Littlewood-Paley type condition (see similar properties  in \cite{CP}  in the $L^p$ setting, $0< p<\infty$,  and for so-called Fock-Sobolev spaces in \cite{Zcho}).\\
 %We mention that  the scalar space $F^{\alpha}_{2}$ ($m=1$) is presented in  \cite{JPR,Zf}, as well as  little Hankel operators .\\

 Up to our knowledge, our results are new, and we point out that our methods have to take into  account  the vectorial setting, for several variables and a general real number $m\geq1$.\\
First,  the techniques used to study scalar Hankel operators ($X=\C$) rely on the identity
$$ \left\langle P_{\alpha}(b \overline x), y\right\rangle_{\alpha}=\left\langle b,xy \right\rangle_{\alpha},\ x,y\text{ holomorphic},$$
which do not hold in the non-commutative case, when the target space $X$ has  higher dimension.\\
When $m=1$, the reproducing kernels are exponential functions, which gives  exact formulas (see  \cite{Zf}).  When $m>1$, we have to introduce new techniques, based on the    asymptotics of kernel functions.\\
Besides, the proofs in \cite{AlCo,Co}   for the unit disc   involve the derivative $b'$;  to deal with entire functions of  several variables, we use  the radial derivative of $b$. In addition,   we consider the spaces $F^{p}_{m,\beta}( Y)$ for  general exponents $p$ and a Banach space $Y$. Thus, our first step is to establish  duality and density results,   which  provide us with  tools to study the boundedness and the compactness of $h_b$.\\

\section{Preliminaries} \label{prelim}

%$d\sigma(\zeta)$ Lebesgue measure on $\s^{2d-1}$ the unit sphere in $\C^d$ (without normalisation)\\
%$$ \int_{\s^{2d-1}}d\sigma(\zeta)=\frac{2\pi^d}{\Gamma(d)} $$

Since the weight  $e^{-\alpha\left|\zeta\right|^{2m}}$ depends only on $\left|\zeta\right|$, 
the monomials $\left(v_{\nu}\right)_{\nu\in\N^d}$,with $v_{\nu}(\zeta)=\zeta^{\nu}$, form an orthogonal basis in $ F^{2}_{m,\alpha} $.   Here, we use  the standard multiindex notations $\nu!=\nu_1!\cdots\nu_d!$, $\zeta^{\nu}=\zeta_1^{\nu_1}\cdots\zeta_d^{\nu_d}$. It was proved in \cite{BEY,Bo} (the formulas are given with different normalizations) that
\begin{equation*}
%\label{s alpha}
	 s_{\alpha,\nu}:=\left\|\zeta^{\nu}\right\|^{2}_{F^{2}_{m,\alpha}}= C^{-1}_{m}  \frac{\nu!\Gamma\left(\frac{d+\left|\nu\right|}{m}\right)}{\Gamma\left(d+\left|\nu\right|\right)\alpha^{\frac{\left|\nu\right|}{m}}}, \text{ where   }  C_m:=\frac{\Gamma\left(\frac{d}{m}\right)}{\Gamma(d)}.
\end{equation*}
%where 
%\begin{equation}\label{Cm}
%	C_m:=\frac{\Gamma\left(\frac{d}{m}\right)}{\Gamma(d)}.
%\end{equation}
In \cite{BY}, we use  the  theory from Aronszajn \cite{Ar} to compute the reproducing kernel of $ F^{2}_{m,\alpha} $, 
\begin{equation}\label{kernel Fm}
	K_{m,\alpha}(\xi,\zeta)=C_m E^{(d-1)}_{\frac{1}{m},\frac{1}{m}}\left(\alpha^{1/m}\left\langle \xi,\zeta\right\rangle\right),\quad \xi,\zeta\in\C^d,
\end{equation}
where 
\begin{equation*}
%\label{Mittag L}
E_{\beta,\gamma}(z)=\sum^{+\infty}_{k=0}\frac{z^k}{\Gamma\left(\beta k+\gamma\right)},\  \beta,\gamma>0,
\end{equation*}
is the Mittag-Leffler function. Recall that
the  orthogonal projection $P_{m,\alpha}: L^{2}_{m,\alpha}\rightarrow F^{2}_{m,\alpha}$ 
 is given by
%the integral operator with kernel $K_{m,\alpha}$,
\begin{equation*}
%\label{proj p alpha}
P_{m,\alpha}f(\zeta)=\int_{\C^d}K_{m,\alpha}(\zeta,\xi)f(\xi)d\mu_{m,\alpha}(\xi),\quad \zeta\in\C^d.
\end{equation*}
Let $b\in F^{\infty}_{m,\alpha}(\BB(\HH))$. From its definition (\ref{Hankel op}), the Hankel operator $h_b$ is defined by

\begin{equation*}\label{hankel integral}
	h_b x(z)=\int_{\C^d}b(\zeta)x(\overline \zeta)K_{m,\alpha}(z,\zeta)d\mu_{m,\alpha}(\zeta),\quad z\in\C^d,
	\end{equation*}
provided that $\left\|b(.)x(.)K_{m,\alpha}(z,.)\right\|_{\HH}$ is in $L^1(d\mu_{m,\alpha})$ for all $z\in\C^d$. Then, relation (\ref{kernel Fm}) induces that 
  we shall  use
  asymptotics of  Mittag-Leffler function and its derivatives. We start with
%as $\left|z\right|\rightarrow+\infty$
\begin{equation}\label{asymp ML m>1/2}
	E_{\frac{1}{m},\frac{1}{m}}(z)=\begin{cases}
	mz^{m-1}e^{z^m}+O\left(z^{-1}\right),\ \left|\arg z\right|\leq\frac{\pi}{2m}\\
	O\left(\frac{1}{z}\right),\ \frac{\pi}{2m}<|\arg z|<\pi
	\end{cases}
\end{equation}
 %and by
%\begin{equation}\label{asymp ML m leq 1/2}
	%E_{\frac{1}{m},\frac{1}{m}}(z)=m\sum^{N}_{j=-N}z^{m-1}e^{2\pi ij(m-1)}e^{z^m e^{2\pi ijm}}+O\left(z^{-1}\right),\ \pi<\arg z\leq\pi,
	%\end{equation}
%for $0 < m \leq\frac{1}{2}$, 
% where $N$ is the integer satisfying $N <\frac{1}{2m}\leq N+1$,
 %and the powers $z^{m-1}$,  $z^{m}$are the principal branches
for $m > \frac{1}{2}$ and  $z\in\C\setminus\left\{0\right\}$  (see e.g. Bateman and Erdelyi \cite{Bo}, vol. III, §18.1, formulas (21)–(22);  Wong and Zhao \cite{WoZh},
and §5.1.4 in the book by Paris and Kaminski  \cite{PaKa}). 

The expansion can be differentiated termwise any number of times (see \cite{BaEr} for details). An
 induction argument shows that
\begin{equation*}\label{def pk}
	\frac{d^{k-1}}{dz^{k-1}}(mz^{m-1}e^{z^m})=\frac{p_k(z^m)}{z^k}e^{z^m},
\end{equation*}
where $p_k$ is  defined recursively by
\begin{equation*}\label{def p k+1}
p_0 = 1,\  p_{k+1}(x) = (mx-k)p_k(x) +mxp_{k}'(x).
\end{equation*}
Thus, for $k\geq 1$, $p_k$ is a polynomial of degree $k$,   with no constant term,   of the form $p_k(x)=\sum^{k}_{l=0}c_{k,l}x^l$. By  induction, we get that 
$\left|c_{k,l}\right|\leq k^{k}(m+1)^{k},$ for $  k\geq 1,\ 0\leq l\leq k.$ 
This leads to the uniform  estimate 
  with respect to $m$,
%indepently from $m$,
$$\left|p_k(x)\right|\asymp m^k \left|x\right|^k, \ \left|x\right|\rightarrow \infty, \ m\geq 1. $$
On another hand, termwise integration of (\ref{asymp ML m>1/2}) yields asymptotics of the primitive
\begin{equation*}\label{E-1}
	E_{\frac{1}{m},\frac{1}{m}}^{(-1)}(z)=\int^{z}_{0}E_{\frac{1}{m},\frac{1}{m}}(u)du,\ z\in\C.
\end{equation*}
 For any nonnegative integer $k$,  we  then obtain the following asymptotics, 
\begin{equation}\label{asymp deri ML m > 1/2}
	E^{(k-1)}_{\frac{1}{m},\frac{1}{m}}(z)=\begin{cases}
	 \frac{p_k(z^m)}{z^k} e^{z^m}+\eta_k(z),\ \left|\arg z\right|\leq\frac{\pi}{2m},\\
	\eta_k(z),\ \frac{\pi}{2m}<|\arg z|<\pi,
	\end{cases} 
\end{equation}
  %and  
	%the fact that $x\Gamma(x)\sim 1$ as $ x\rightarrow 0$, 
for $z\in\C\setminus\left\{0\right\}$. An inspection of the remainder shows that
	\begin{equation*}\label{remainder}
	\left|\eta_k(z)\right|\lesssim \begin{cases} m^{-1}\left|z\right|^{-k}\text{ if }k\geq 1,\\
	 m^{-1}\left|z\right|\text{ if }k=0,
	\end{cases}
\end{equation*}
	and  the function $E^{(k-1)}_{\frac{1}{m},\frac{1}{m}}(z)$ is bounded for $|z|\leq R_0<1$,  the constants   being independent of $m$ (see \cite{Bo,BaEr,WoZh}).  \\
 
 %and
%\begin{equation}\label{asymp deri ML m leq 1/2}
	%E^{(k-1)}_{\frac{1}{m},\frac{1}{m}}(z)=\sum^{N}_{j=-N}\frac{p_k(z^m e^{2\pi ijm})}{z^k}z^{m-1}e^{2\pi ij(m-1)}e^{z^m e^{2\pi ijm}}+O\left(z^{-k}\right),\ -\pi<\arg z\leq\pi,
	%\end{equation}

%for $0 <m <\frac{1}{2}$, $N<\frac{1}{2m}\leq N+1$. 

%In order to handle integrals involving the function $E$ and its derivatives,  we will use the Laplace method.
Let us recall the following result, known as the Laplace method \cite{FeEv}.

\begin{lemma}\label{laplace} 
Let  $a$ be a real number, and $f$,  $g$ be $\mathcal{C}^{\infty}$ real-valued functions on an interval $[a, b)$. For positive  real numbers $\lambda$, we consider the integrals
\begin{equation*}\label{int laplace }
		J(\lambda):=\int^{b}_{a}f(x)e^{\lambda g(x)}dx.
	\end{equation*}.
\begin{enumerate}
	\item [(a)] If $g$ attains its maximum at a unique interior point $x_0\in (a, b)$ and $g''(x_0)\neq 0,$
	\begin{equation}\label{laplace simple}
		J(\lambda)\asymp f(x_0)e^{\lambda g(x_0)}\sqrt{\frac{-2\pi}{ \lambda g''(x_0)}}\text{ as }\lambda\rightarrow +\infty.
	\end{equation}
	
\item [(b)] If $g$ attains its
maximum at the boundary point $x_0 =a$ and $g'(a)\neq 0$,
\begin{equation}\label{laplace dk}
J(\lambda)\asymp e^{\lambda g(a)}\sum^{+\infty}_{k=0} c_k \lambda^{-k-1},
\end{equation}
where
$$ c_k=\left(\frac{1}{-g'(x)}\frac{d}{dx}\right)^k\left.\frac{f(x)}{-g'(x)}\right|_{x=a}.$$
\end{enumerate}
\end{lemma}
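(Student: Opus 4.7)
The plan is to give the standard proofs of the Laplace-method asymptotics: part (a) by localization plus a Gaussian substitution, and part (b) by iterated integration by parts.

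For part (a), note that $g''(x_0)\neq 0$ together with $x_0$ being a maximum forces $g''(x_0)<0$, so $g$ has a non-degenerate quadratic peak at $x_0$. I would split $J(\lambda)=J_1(\lambda)+J_2(\lambda)$, where $J_1$ is the integral over $[x_0-\delta,x_0+\delta]$ and $J_2$ the integral over the complement in $[a,b)$. Since $x_0$ is the unique maximum, one can choose $\delta$ small enough that $g(x)\leq g(x_0)-\eta$ for some $\eta>0$ on the complement, whence $J_2=O(e^{\lambda(g(x_0)-\eta)})$, which is exponentially smaller than the predicted leading term. On $[x_0-\delta,x_0+\delta]$ I would Taylor expand $g(x)=g(x_0)+\tfrac{1}{2}g''(x_0)(x-x_0)^2+O((x-x_0)^3)$, use continuity of $f$ at $x_0$, and substitute $t=(x-x_0)\sqrt{-\lambda g''(x_0)/2}$. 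The resulting integral is $f(x_0)\,e^{\lambda g(x_0)}\sqrt{2/(-\lambda g''(x_0))}\int_{\mathbb R}e^{-t^{2}}\,dt$, up to a factor $1+o(1)$. Using $\int_{\mathbb R}e^{-t^{2}}\,dt=\sqrt{\pi}$ then yields \eqref{laplace simple}.

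For part (b), since $g$ is maximized at $a$ and $g'(a)\neq 0$ we necessarily have $g'(a)<0$, and $g'$ stays strictly negative on some $[a,a+\delta]$; on $[a+\delta,b)$ we again have $g(x)\leq g(a)-\eta$, producing an exponentially negligible contribution. On $[a,a+\delta]$ I would rewrite
\[
\int_a^{a+\delta}f(x)\,e^{\lambda g(x)}\,dx=\frac{1}{\lambda}\int_a^{a+\delta}\frac{f(x)}{g'(x)}\bigl(e^{\lambda g(x)}\bigr)'dx
\]
and integrate by parts. The boundary term at $x=a$ yields $c_0\lambda^{-1}e^{\lambda g(a)}$ with $c_0=f(a)/(-g'(a))$; the boundary term at $x=a+\delta$ is exponentially small; and the remaining integral is of the same form as the original, with $f$ replaced by the new function obtained by applying the operator $\tfrac{1}{-g'}\tfrac{d}{dx}$ to $f/(-g')$. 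Iterating this procedure $N+1$ times produces the partial sum $e^{\lambda g(a)}\sum_{k=0}^{N}c_k\lambda^{-k-1}$, where the coefficients match the formula defining $c_k$, plus a remainder integral of order $\lambda^{-N-2}e^{\lambda g(a)}$; this is exactly the asymptotic series \eqref{laplace dk}.

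The main obstacle is the bookkeeping in part (b): one must verify by induction on $k$ that each successive integration by parts reproduces precisely the operator $\bigl(\tfrac{1}{-g'}\tfrac{d}{dx}\bigr)$ acting on the previous coefficient function, and that the remainders can be controlled uniformly so that they gain a full factor of $\lambda^{-1}$ at each step. This requires keeping track of signs carefully and using the strict negativity of $g'$ on $[a,a+\delta]$ so that $1/g'$ is $\mathcal{C}^{\infty}$ there. For (a), the only delicate point is bounding the effect of the cubic Taylor error after the rescaling: it contributes a multiplicative factor $1+O(\lambda^{-1/2})$, which is absorbed into the $\asymp$ notation of \eqref{laplace simple}.
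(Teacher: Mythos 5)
The paper does not prove this lemma at all: it is stated as a recollection of the classical Laplace method and justified only by the citation to Fedoryuk, so there is no in-paper argument to compare against. Your proposal is the standard and correct proof of exactly that classical result (localization plus Gaussian substitution for the interior non-degenerate maximum, iterated integration by parts for the endpoint case, with the coefficient bookkeeping $c_k=\bigl(\tfrac{1}{-g'}\tfrac{d}{dx}\bigr)^k\tfrac{f}{-g'}\big|_{x=a}$ coming out correctly), modulo the tacit integrability/decay hypotheses near $b$ that the lemma statement itself suppresses.
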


In view of (\ref{asymp deri ML m > 1/2}), we set $I_m=\left[-\frac{\pi}{2m},\frac{\pi}{2m}\right]$, 
%\begin{equation}\label{def kappa}
%\kappa(t)=\begin{cases}
%1, \ |t|\geq 1\text{ and } \arg t \in I_m,\\
%0 \text{ otherwise},
%\end{cases}
%\end{equation}
and for any fixed $z\in\C^d$, $R>0$, 
\begin{align}\label{def SR}
	S_{R}&=S_{R}(z):=\left\{\zeta\in\C^d,\ \left|\left\langle z,\zeta\right\rangle\right|\geq R \text{ and } \arg\left\langle z,\zeta\right\rangle\in I_m \right\}.
\end{align}
 Moreover,  $\chi_R$ will denote the characteristic function of the closed ball in $\C^d$,
 centered at $0$, of radius $R$, and $d\sigma(\zeta)$ is the normalized Lebesgue measure on $\s^{2d-1}$ the unit sphere in $\C^d$.
 
The following Lemmas  supply estimates of  quantities depending on the parameter $m\geq 1$ and the variable $z\in\C^d$.
%involving the functions $E^{(k-1)}_{\frac{1}{m},\frac{1}{m}}$,  and we shall stress the dependence on $m$.
%The next technical Lemmas will be used to estimate integrals involving the functions $E^{(k-1)}_{\frac{1}{m},\frac{1}{m}}$. 

\begin{lemma}\label{int sphere}
Assume that $R_0, R>0$,  $r>0$, $p\geq 1$ and $l$ is a nonnegative integer. For $c$  a real number and 
 $z\in\C^d$,   put
\begin{align*}
%\label{Q def}
Q_{c,l,p,\beta}(R,r,z)&:=\int_{\s^{2d-1}\cap S_{R}(z)}\left|\left\langle z,r\zeta\right\rangle\right|^c\left|E^{(l-1)}_{\frac{1}{m},\frac{1}{m}}\left(\left\langle\beta^{1/m} z,r\zeta\right\rangle\right)\right|^pd\sigma(\zeta),\\
 Q^e_{c,l,p,\beta}(R,r,z)&:=\int_{\s^{2d-1}\setminus S_{R}(z)}\left|\left\langle z,r\zeta\right\rangle\right|^c\left|E^{(l-1)}_{\frac{1}{m},\frac{1}{m}}\left(\left\langle\beta^{1/m} z,r\zeta\right\rangle\right)\right|^pd\sigma(\zeta).\nonumber
\end{align*}

%where $d\sigma$ denotes the surface measure on the unit sphere $\s^{2d-1}$ of $\C^d$ and
When $\left|z\right|\geq R_0$, we have
%$r|z|\rightarrow\infty$, we have
\begin{align}
\label{asymp Q}
Q_{c,l,p,\beta}(R,r,z)&\asymp m^{pl-d}\left(r|z|\right)^{pl(m-1)-\frac{m}{2}+c-m(d-1)}e^{p\beta\left(r|z|\right)^m},\\
Q^e_{c,l,p,\beta}(R,r,z)&\lesssim\left(r|z|\right)^c.\nonumber
\end{align}
%where $a_Q(m,p,l,\beta)=m^{pl-d}\beta^{pl\left(1-\frac{1}{m}\right)+\frac{1}{2}-d}.$

\end{lemma}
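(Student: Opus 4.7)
The plan is to reduce both integrals to one-dimensional problems using rotational invariance of $d\sigma$, insert the asymptotic \eqref{asymp deri ML m > 1/2}, and apply Laplace's method in the angular and radial variables. Taking $z=|z|e_1$ and writing $\zeta_1=ue^{i\phi}$ with $u\in[0,1]$, $\phi\in[-\pi,\pi)$, the standard slice formula converts a sphere integral of a function of $\zeta_1$ into
\begin{equation*}
\frac{d-1}{\pi}\int_0^1\!\!\int_{-\pi}^{\pi}(\cdots)\,u(1-u^2)^{d-2}\,d\phi\,du
\end{equation*}
when $d\geq 2$, with the obvious replacement for $d=1$. Since $\langle z,\zeta\rangle=|z|\overline{\zeta_1}$, the slice $\s^{2d-1}\cap S_R(z)$ becomes $\{u\geq R/|z|,\ \phi\in I_m\}$, which is nonempty as soon as $|z|\geq R$; enlarging $R_0$ if necessary, I may assume this holds.

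On that slice, $w:=\beta^{1/m}r|z|u\,e^{-i\phi}$ satisfies $\arg w\in I_m$ and $|w|^m\gtrsim \beta r^m R^m$, so the principal branch of \eqref{asymp deri ML m > 1/2} applies. Using $|p_l(w^m)|\asymp m^l|w|^{lm}$ uniformly in $m\geq 1$, together with the remainder estimate on $\eta_l$, yields
\begin{equation*}
\bigl|E^{(l-1)}_{1/m,1/m}(w)\bigr|^p \;\asymp\; m^{pl}(r|z|u)^{pl(m-1)}\,e^{p\beta(r|z|u)^m\cos(m\phi)}
\end{equation*}
up to $\beta$-dependent constants. Substituting $\psi=m\phi$, the angular integral becomes $\frac{2}{m}\int_0^{\pi/2}e^{\lambda(u)\cos\psi}d\psi$ with $\lambda(u)=p\beta(r|z|u)^m$, and Lemma~\ref{laplace}(a), applied at the interior maximum $\psi=0$ of $\cos$ (with $g''(0)=-1$), produces the factor $m^{-1}(r|z|u)^{-m/2}e^{p\beta(r|z|u)^m}$. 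The remaining radial integral peaks at the boundary $u=1$, where the prefactor $(1-u^2)^{d-2}$ vanishes for $d\geq 2$; Lemma~\ref{laplace}(b) therefore does not give the leading order directly, so I localise via $s=1-u$, expand $u^m=1-ms+O(s^2)$ and $(1-u^2)^{d-2}\sim 2^{d-2}s^{d-2}$, and apply Watson's lemma to obtain $\int_0^\varepsilon s^{d-2}e^{-\Lambda ms}\,ds\asymp\Gamma(d-1)(\Lambda m)^{-(d-1)}$ with $\Lambda=p\beta(r|z|)^m$. Combining, the powers of $m$ add to $m^{pl-1-(d-1)}=m^{pl-d}$ and those of $r|z|$ to $c+pl(m-1)-m/2-m(d-1)$, which is the claimed asymptotic.

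For $Q^e_{c,l,p,\beta}$ I split $\s^{2d-1}\setminus S_R(z)$ into (i) $\{u<R/|z|\}$, where $|w|\leq\beta^{1/m}rR$ is bounded independently of $|z|$ and $m$, so $|E^{(l-1)}(w)|$ is bounded by an $m$-independent constant and $|\langle z,r\zeta\rangle|^c=(r|z|u)^c\leq(rR)^c\lesssim(r|z|)^c$; and (ii) $\{u\geq R/|z|,\ \phi\notin I_m\}$, where the second branch of \eqref{asymp deri ML m > 1/2} gives $|E^{(l-1)}(w)|\lesssim m^{-1}|w|^{-l}$ for $l\geq 1$, respectively $m^{-1}|w|$ for $l=0$. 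In either regime the integrand has at most polynomial growth in $|z|$ and integration over the sphere produces a majorant of the form $(r|z|)^c$ times a constant. The main obstacle is the radial Laplace analysis: the vanishing prefactor $(1-u^2)^{d-2}$ prevents a direct appeal to Lemma~\ref{laplace}(b), so a Watson-type expansion must be done by hand, with careful bookkeeping of all factors of $m$ in order to reach the uniform exponent $m^{pl-d}$; a secondary subtlety is to check that the remainders $\eta_l$ are dominated by the exponential main term uniformly in $m\geq 1$, which follows from the explicit remainder estimates recalled just after \eqref{asymp deri ML m > 1/2}.
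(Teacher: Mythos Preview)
Your proposal is correct and follows essentially the same route as the paper: rotational reduction to $z=|z|e_1$, the slice formula on $\s^{2d-1}$, insertion of the asymptotic \eqref{asymp deri ML m > 1/2}, Laplace's method in the angular variable, and then a boundary-type Laplace/Watson analysis in the radial variable near $u=1$.

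One small correction: your claim that Lemma~\ref{laplace}(b) ``does not give the leading order directly'' because $(1-u^2)^{d-2}$ vanishes is not quite right, and the paper in fact does invoke \eqref{laplace dk} at this step. The point is that the expansion in \eqref{laplace dk} is a full asymptotic series $e^{\lambda g(a)}\sum_k c_k\lambda^{-k-1}$; when $f$ vanishes to order $d-2$ at the boundary, the coefficients $c_0,\dots,c_{d-3}$ vanish and the first surviving term is $c_{d-2}\lambda^{-(d-1)}$, which carries exactly the factor $m^{-(d-1)}$ (through $g'(0)=-m$) and the power $(r|z|)^{-m(d-1)}$ that you obtain. Your explicit Watson computation with $s=1-u$ is simply the hands-on version of this, so the two arguments are equivalent. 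A second minor point: after writing $\tfrac{2}{m}\int_0^{\pi/2}e^{\lambda\cos\psi}\,d\psi$ you call $\psi=0$ an ``interior maximum'', but on $[0,\pi/2]$ it is a boundary point; the correct phrasing is either to keep the symmetric interval $[-\pi/2,\pi/2]$ before applying Lemma~\ref{laplace}(a), or to quote the half-interval variant of Laplace's method. Neither issue affects the outcome.
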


\begin{proof}
  The proof being similar to that of Lemma 15 in \cite{BEY},  we only give a sketch. 
%On another hand, we will perform the computation up to some constant involving the dimension $d$, which is immaterial for our purpose. 
Since the measure $d\sigma$ is unitarily invariant, we may assume  that $z = (|z|,0,\cdots,0)$. Thus, 
$$ Q:=Q_{c,l,p,\beta}(R,r,z)=\int_{\s^{2d-1}}\kappa_R\left( |z|r\zeta_1\right)\left||z|r\zeta_1\right|^c\left|E^{(l-1)}_{\frac{1}{m},\frac{1}{m}}\left(\beta^{1/m}|z|r\zeta_1\right)\right|^pd\sigma(\zeta), $$
where $\kappa_R$ is the indicator function of the set $\Sigma_R=\left\{\xi\in\C^d,\ \left|\xi\right|\geq R, \ \arg \xi\in I_m \right\}$.
Assume $d\geq 2$.  Up to a constant factor, $Q$ is equal to  
$$\int^{1}_{0}\left||z|r \rho \right|^c\int^{\pi}_{-\pi}\kappa_R\left( |z|r \rho e^{i\theta}\right)\left|E^{(l-1)}_{\frac{1}{m},\frac{1}{m}}\left(\beta^{1/m}|z|r \rho e^{i\theta}\right)\right|^p d\theta\left(1-\rho^2\right)^{d-2}\rho  d\rho.$$
  For  $y=|z|r \rho$, the interior integral has the form
$$ J(y):=\int^{\pi}_{-\pi}\kappa_R\left( y e^{i\theta}\right)\left|E^{(l-1)}_{\frac{1}{m},\frac{1}{m}}\left(\beta^{1/m}y e^{i\theta}\right)\right|^p d\theta.$$
%Since $x\Gamma(x)\sim 1$ as $x\rightarrow 0$, the integral corresponding to $\pi/2m<\left|\theta\right|  <\pi$  is 
%comparable to $m^{-1} R^{-l-1}$ 
%bounded, independently of $m$ (Wong Zhao, erdelyi).
 Then, by (\ref{asymp deri ML m > 1/2}) and (\ref{laplace simple}), we get
\begin{align*}
J(y)&\asymp m^{pl}(1-\chi_R(y))\int^{\pi/2m}_{-\pi/2m}\left(\beta^{1/m}y\right)^{pl(m-1)} e^{\beta p y^m \cos(m\theta)}d\theta,\\
&\asymp m^{pl-1}(1-\chi_R(y))y^{pl(m-1)-\frac{m}{2}}e^{\beta p y^m}  .
\end{align*}
We now use the  change of variables $\rho=1-x$ and (\ref{laplace dk}), and
% with $\left[a,b\right]= \left[0,1\right],\  \lambda =p \beta r^m|z|^m,\  g(x) = (1 - x)^m,$
  observe that $$f (x) =
(1 - x)^{(m-1)pl-m/2+c}
(1- x)(2 - x)^{d-2}x^{d-2}$$ vanishes at $x_0=0$ to order $d-2$.  Letting $r|z|$ tend to $\infty$, we get (\ref{asymp Q}), which also holds for $d=1$.\\
The estimate for $Q^e_{c,l,p,\beta}(R,r,z)$ follows from the fact that $E^{(l-1)}_{\frac{1}{m},\frac{1}{m}}(z)$ is bounded outside $\Sigma_R$.

\end{proof}

\begin{lemma}\label{int blp beta}
Assume that $R,R_0 >0$,  $r>0$,  $p\geq 1$, and $l$ is a nonnegative integer. For $c$  a real number  and 
 $z\in\C^d$,   put
$$ I(z)= I_{c,l,p,\beta,\alpha}(R,z):=\int_{S_{R}(z)}\left|\left\langle z,\xi\right\rangle\right|^c\left|E^{(l-1)}_{\frac{1}{m},\frac{1}{m}}\left(\alpha^{1/m}\left\langle z,\xi\right\rangle\right)\right|^pe^{-\beta\left|\xi\right|^{2m}}dv(\xi),$$
and if  $c>-2d$,
$$ I^e(z)= I^e_{c,l,p,\beta,\alpha}(R,z):=\int_{\C^d\setminus S_{R}(z)}\left|\left\langle z,\xi\right\rangle\right|^c\left|E^{(l-1)}_{\frac{1}{m},\frac{1}{m}}\left(\alpha^{1/m}\left\langle z,\xi\right\rangle\right)\right|^pe^{-\beta\left|\xi\right|^{2m}}dv(\xi).$$
Then for $\left|z\right|\geq R_0,$
\begin{align}
\label{asymp I}
  I(z)&\asymp m^{pl-d-1}\left|z\right|^{2\left(pl-d\right)\left(m-1\right)+2c}e^{\frac{p^2\alpha^2}{4\beta}\left|z\right|^{2m}},\\
I^c(z)&\lesssim	\left|z\right|^b.\nonumber
\end{align}
	  
\end{lemma}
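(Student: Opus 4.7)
My plan is to reduce the lemma to two one-dimensional tasks: an application of Lemma \ref{int sphere} to handle the spherical integration, and then the Laplace method (Lemma \ref{laplace}(a)) to handle the remaining radial integration.

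First I would pass to polar coordinates $\xi = r\zeta$ with $r = |\xi|>0$ and $\zeta\in\s^{2d-1}$, so that $dv(\xi) = c_d\, r^{2d-1}\,dr\,d\sigma(\zeta)$. Under this change of variables the inner integrand $\left|\left\langle z,\xi\right\rangle\right|^c \bigl|E^{(l-1)}_{1/m,1/m}(\alpha^{1/m}\langle z,\xi\rangle)\bigr|^p$ depends on $\zeta$ exactly through the quantity $r|\langle z,\zeta\rangle|$ and its argument, so the set $S_R(z)$ translates into $\{r|\langle z,\zeta\rangle|\geq R,\ \arg\langle z,\zeta\rangle\in I_m\}$. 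Therefore Lemma \ref{int sphere} directly yields
\begin{equation*}
I(z) \;\asymp\; \int_0^{\infty} Q_{c,l,p,\alpha}(R,r,z)\,r^{2d-1}e^{-\beta r^{2m}}\,dr \;\asymp\; m^{pl-d}|z|^{A}\!\int_0^{\infty}\! r^{A+2d-1}e^{p\alpha(r|z|)^m - \beta r^{2m}}\,dr,
\end{equation*}
with $A := pl(m-1)-\frac{m}{2}+c-m(d-1)$.

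Next I would analyze the radial integral via the Laplace method. Rescaling $r = t|z|$ turns the exponent into $|z|^{2m}g(t)$ where $g(t)=p\alpha t^m - \beta t^{2m}$. The function $g$ attains its unique interior maximum at $t_0=(p\alpha/2\beta)^{1/m}$, with $g(t_0)=p^2\alpha^2/(4\beta)$ and $g''(t_0)=-pm^2\alpha\, t_0^{m-2}$. Applying (\ref{laplace simple}) with $\lambda = |z|^{2m}\to\infty$ gives the inner integral $\asymp m^{-1}|z|^{-m}\exp\!\bigl(\tfrac{p^2\alpha^2}{4\beta}|z|^{2m}\bigr)$, where I use the crucial observation that $t_0^m = p\alpha/(2\beta)$ is a pure constant, so all $m$-dependent powers $t_0^{\gamma m}$ that appear in the prefactor collapse to constants independent of $m\geq 1$. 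Combining with the overall factor $|z|^{A}\cdot |z|^{A+2d}$ from the rescaling and the prefactor $m^{pl-d}$, a straightforward arithmetic check $2A + 2d - m = 2(pl-d)(m-1)+2c$ delivers exactly the stated asymptotic for $I(z)$.

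The exterior integral $I^e(z)$ is much softer: on $\C^d\setminus S_R(z)$, formula (\ref{asymp deri ML m > 1/2}) together with the boundedness of $E^{(l-1)}_{1/m,1/m}$ on bounded sets gives $|E^{(l-1)}_{1/m,1/m}(\alpha^{1/m}\langle z,\xi\rangle)|\lesssim 1+ |\langle z,\xi\rangle|^{-1}$, so the integrand is bounded by a polynomial in $|\langle z,\xi\rangle|$ against the Gaussian $e^{-\beta|\xi|^{2m}}$, producing only polynomial growth in $|z|$; expanding $|\langle z,\xi\rangle|^{c}\leq |z|^c|\xi|^c$ (or using $c>-2d$ near the origin) and integrating against the Gaussian yields the bound $|z|^c$ (up to constants).

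The main obstacle I anticipate is tracking the $m$-dependence uniformly. The exponent $A$ depends linearly on $m$, so a priori $t_0^{A+2d-1}$ could misbehave; the rescue is precisely that $t_0^m$ is a constant, which forces the Laplace prefactor to produce only $m^{-1}$ and the precise power $|z|^{-m}$, with no hidden $m$-dependence in the constants. Verifying this uniformity carefully — and likewise verifying that the error $\eta_k$ in (\ref{asymp deri ML m > 1/2}) remains negligible both inside $S_R$ (where it is dominated by the exponentially growing main term) and outside (where it yields the polynomial bound) — is the delicate part of the argument.
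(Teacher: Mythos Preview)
Your proposal is correct and follows essentially the same route as the paper: pass to polar coordinates, apply Lemma~\ref{int sphere} to the spherical integral, and then treat the remaining one-variable radial integral by the Laplace method. The only cosmetic difference is in the radial substitution: the paper sets $t=r^m$ (and then $x=t/\lambda$ with $\lambda=\frac{p\alpha}{2\beta}|z|^m$), which turns the exponent into the purely quadratic $-\beta\lambda^2(x-1)^2$ and makes the $m$-uniformity automatic, whereas you rescale $r=t|z|$ and keep the $m$-dependent exponent $g(t)=p\alpha t^m-\beta t^{2m}$, recovering uniformity from the observation that $t_0^m=p\alpha/(2\beta)$ is constant. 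Both yield the same asymptotic; the paper's substitution is marginally cleaner for tracking constants, while yours is slightly more direct.
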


\begin{proof}
%\gamma=\gamma(m)=\frac{b}{m}+pl\left(1-\frac{1}{m}\right)-\frac{1}{2}-d+\frac{2d}{m}.
%Since $x\Gamma(x)\sim 1$ as $x\rightarrow 0$, 
%comparable to $m^{-1} R^{-l-1}$ 
%bounded, independently of $m$ (Wong Zhao, erdelyi).
%In $I(z)$, the contribution from $\xi\in \C^d\setminus S_m$ is bounded, independently of $m$.
  %Hence, 
	Using spherical coordinates, Lemma    \ref{int sphere}, the change of variable $t=r^m$,  and setting $\lambda=\frac{\alpha p}{2\beta}|z|^m$,
%$g(t)=-\beta\left(t-\lambda\right)^2$, 
$f(t)=t^{2\frac{d}{m}-1+pl\left(1-\frac{1}{m}\right)-\frac{1}{2}+\frac{c}{m}-(d-1)}$,   we get
\begin{align*}
I(z)&\asymp\int^{+\infty}_{0}r^{2d-1}Q_{c,l,p,\alpha}(R,r,z)e^{-\beta r^{2m}}dr\\
&\asymp m^{-1} m^{pl-d} |z|^{pl(m-1)-\frac{m}{2}+c-m(d-1)}
e^{\frac{\alpha^2 p^2}{4\beta}|z|^{2m}}\int^{+\infty}_{\frac{1}{\lambda}}f(x)f(\lambda)e^{-\beta\lambda^2 (x-1)^2}\lambda dx.
\end{align*}

Relation (\ref{laplace simple}) with $g(x)=-\beta(x-1)^2 $ and $x_0=1$ gives (\ref{asymp I}). The estimate for $I^e(z)$ follows from the estimate for $Q^e_{c,l,p,\beta}(R,r,z)$ in Lemma \ref{int sphere}.

\end{proof}

\begin{lemma}\label{estimate E in L infty}
Let $\alpha$ be a positive real number, $c$ a nonnegative real number, and $l$ a positive integer. 
%For $z\in\C^d$, set
%$$ T_{m,b,l,\alpha}(z):=\sup_{\zeta\in\C^d}\left|\left\langle z,\zeta \right\rangle\right|^b \left|E^{(l-1)}_{\frac{1}{m},\frac{1}{m}}\left(\alpha^{\frac{1}{m}}\left\langle %z,\zeta \right\rangle\right)\right|e^{-\frac{\alpha}{2}|\zeta|^{2m}}.$$
As $|z|\rightarrow\infty$, we have 
%$$T_{m,b,l,\alpha}(z)\lesssim m^l |z|^{2b+2l(m-1)} e^{\frac{\alpha}{2}|z|^{2m}}.$$
$$\sup_{\zeta\in\C^d}\left|\left\langle z,\zeta \right\rangle\right|^c \left|E^{(l-1)}_{\frac{1}{m},\frac{1}{m}}\left(\alpha^{\frac{1}{m}}\left\langle z,\zeta \right\rangle\right)\right|e^{-\frac{\alpha}{2}|\zeta|^{2m}}\lesssim m^l |z|^{2c+2l(m-1)} e^{\frac{\alpha}{2}|z|^{2m}}.  $$

\end{lemma}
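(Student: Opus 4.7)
The plan is to bound the expression pointwise in $\zeta$ using the asymptotic expansion \eqref{asymp deri ML m > 1/2} for $E^{(l-1)}_{\frac{1}{m},\frac{1}{m}}$, combined with Cauchy--Schwarz in the form $|\langle z,\zeta\rangle|\leq|z||\zeta|$, and then to maximize the resulting one-variable expression in $|\zeta|$.

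Set $w:=\alpha^{1/m}\langle z,\zeta\rangle$ and split the analysis according to whether $|w|\leq R_0$, or $|w|\geq R_0$ with $\arg w\in I_m$, or $|w|\geq R_0$ with $\arg w\notin I_m$. The main regime is the second. There, the coefficient estimate $|c_{l,j}|\leq l^l(m+1)^l$ recorded earlier yields $|p_l(w^m)|\lesssim m^l|w|^{lm}$ for $|w|\geq R_0$, and together with \eqref{asymp deri ML m > 1/2} gives
\begin{equation*}
\bigl|E^{(l-1)}_{\frac{1}{m},\frac{1}{m}}(w)\bigr|\lesssim m^l|w|^{l(m-1)}e^{\re(w^m)},
\end{equation*}
the error $\eta_l(w)$ being absorbed into the main term. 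Substituting $|w|\leq\alpha^{1/m}|z||\zeta|$ and $\re(w^m)\leq|w|^m\leq\alpha|z|^m|\zeta|^m$, the quantity on the left of the lemma is dominated by
\begin{equation*}
C\,m^l(|z||\zeta|)^{c+l(m-1)}\exp\!\bigl(\alpha|z|^m|\zeta|^m-\tfrac{\alpha}{2}|\zeta|^{2m}\bigr).
\end{equation*}
Setting $s=|\zeta|^m$, the supremum of $s^{(c+l(m-1))/m}\exp(\alpha|z|^m s-\tfrac{\alpha}{2}s^2)$ over $s\geq 0$ is attained at $s=|z|^m(1+o(1))$ as $|z|\to\infty$, producing the factor $e^{\frac{\alpha}{2}|z|^{2m}}$ and a polynomial factor $|z|^{c+l(m-1)}$, which combines with the $|z|^{c+l(m-1)}$ already present to give exactly $m^l|z|^{2c+2l(m-1)}e^{\frac{\alpha}{2}|z|^{2m}}$.

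The other two regimes are harmless. When $|w|\leq R_0$, the function $E^{(l-1)}_{\frac{1}{m},\frac{1}{m}}(w)$ is bounded by a constant independent of $m$, and $|\langle z,\zeta\rangle|^c$ is bounded by a constant depending only on $\alpha,c$, so the whole expression is $O(1)$. When $|w|\geq R_0$ and $\arg w\notin I_m$, only $|\eta_l(w)|\lesssim m^{-1}|w|^{-l}$ is available, leaving the expression bounded by a constant times $|\langle z,\zeta\rangle|^{c-l}e^{-\frac{\alpha}{2}|\zeta|^{2m}}$; since $|\langle z,\zeta\rangle|$ is bounded away from $0$ in this region, optimizing in $|\zeta|$ yields at most a polynomial bound in $|z|$, which is dwarfed by $m^l|z|^{2c+2l(m-1)}e^{\frac{\alpha}{2}|z|^{2m}}$ as $|z|\to\infty$.

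The main technical point is to carry the precise power $m^l$ through the estimate: this factor arises solely from the leading behaviour of $p_l$, so it is essential to use the uniform bound on $|c_{l,j}|$ rather than treating $p_l$ as an $m$-independent polynomial. Beyond this bookkeeping, the argument is a one-variable maximization; since only a pointwise supremum is sought, no appeal to Lemma~\ref{laplace} is needed here.
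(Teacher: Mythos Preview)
Your proof is correct and follows essentially the same approach as the paper: both use the asymptotic \eqref{asymp deri ML m > 1/2} together with $|\langle z,\zeta\rangle|\le|z||\zeta|$ to reduce to a one-variable maximization in $|\zeta|$, and both locate the maximum near $|\zeta|^m=|z|^m$ to produce the factor $|z|^{2c+2l(m-1)}e^{\frac{\alpha}{2}|z|^{2m}}$. The only difference is presentational: the paper compresses the argument by completing the square in the exponent and writing the bound directly as $m^l|z|^{c+l(m-1)}e^{\frac{\alpha}{2}|z|^{2m}}u(|\zeta|)$ with $u(t)=t^{c+l(m-1)}e^{-\frac{\alpha}{2}(t^m-|z|^m)^2}$, whereas you handle the three regimes for $w$ explicitly before maximizing.
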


\begin{proof}
From (\ref{asymp deri ML m > 1/2}), we deduce that, when $|z|\rightarrow\infty$,
$$ \left|\left\langle z,\zeta \right\rangle\right|^c \left|E^{(l-1)}_{\frac{1}{m},\frac{1}{m}}\left(\alpha^{\frac{1}{m}}\left\langle z,\zeta \right\rangle\right)\right|e^{-\frac{\alpha}{2}|\zeta|^{2m}} \lesssim m^l |z|^{c+l(m-1) }e^{\frac{\alpha}{2}|z|^{2m}}u\left(\left|\zeta\right|\right),$$
where 
$$ u(t)=t^{c+l(m-1) }e^{-\frac{\alpha}{2}\left(t^m-|z|^{m}\right)^2},\ t\in \R_+.$$
We conclude by estimating the maximum value of $u$.

\end{proof} 
 Straightforward consequences of  Lemmas \ref{int blp beta}  and  \ref{estimate E in L infty}    are estimates of the norms of the reproducing kernels
$$ K_{m,\alpha,z}=K_{m,\alpha}(.,z),\ z\in \C^d .$$    When $\left|z\right|\rightarrow\infty$,
%\begin{prop}\label
%for $ z\in\C^d  $,
\begin{align}\label{estimate kern in F}
\left\|K_{m,\alpha,z}\right\|_{F^{p}_{m,\alpha}}&\asymp  m^{d-\frac{d+1}{p}} \left|z\right|^{2d\left(1-\frac{1}{p}\right)(m-1)}e^{\frac{\alpha}{2}\left|z\right|^{2m}},\ 1\leq p<\infty,\\
\left\|K_{m,\alpha,z}\right\|_{F^{\infty}_{m,\alpha}}&\lesssim  m^{d} \left|z\right|^{2d(m-1)}e^{\frac{\alpha}{2}\left|z\right|^{2m}}.
	%\left\|K_{m,\alpha,z}\right\|_{F^{\infty}_{m,\alpha}}&\lesssim m^d \left|z\right|^{2d(m-1)}e^{\frac{\alpha}{2}\left|z\right|^{2m}}.
	 \end{align}

%\end{prop} 

We now establish pointwise estimates for functions in $F^{p}_{m,\beta}(Y)$.
\begin{prop}\label{pointwise Fp}
For $p>0$,  set
\begin{equation*}
%\label{tau p}
	\tau_p:=\frac{2}{p}d(m-1).
\end{equation*}
 Then  for any $f\in F^{p}_{m,\beta}(Y)$,
$$\left\| f(z)\right\|_Y\leq C\left\|f\right\|_{ F^{p}_{m,\beta}(Y)}\left(1+\left|z\right|\right)^{\tau_p}e^{\frac{\beta}{2}\left|z\right|^{2m}},\ z\in\C^d, $$
where the constant $C$ does not depend on $m$.
\end{prop}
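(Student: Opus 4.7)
The plan is to reduce to scalar-valued $f$ via Hahn--Banach, and then exploit the reproducing formula of $F^{2}_{m,\beta}$ together with H\"older's inequality and the kernel norm estimate (\ref{estimate kern in F}) just assembled.

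The case $p=\infty$ is immediate from the definition of $\|\cdot\|_{F^{\infty}_{m,\beta}(Y)}$, since $\tau_{\infty}=0$. For $1\le p<\infty$ and $f\in F^{p}_{m,\beta}(Y)$, observe that for every $y^{*}\in Y^{*}$ with $\|y^{*}\|_{Y^{*}}\le 1$, the entire scalar function $g_{y^{*}}:=\langle f(\cdot),y^{*}\rangle_{Y-Y^{*}}$ belongs to $F^{p}_{m,\beta}$ with $\|g_{y^{*}}\|_{F^{p}_{m,\beta}}\le \|f\|_{F^{p}_{m,\beta}(Y)}$. Since $\|f(z)\|_{Y}=\sup_{\|y^{*}\|\le 1}|g_{y^{*}}(z)|$ by Hahn--Banach, the statement reduces to the scalar case.

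For scalar $f\in F^{p}_{m,\beta}$, I would invoke the reproducing formula
\[
f(z)=\int_{\C^{d}}f(\zeta)\,K_{m,\beta}(z,\zeta)\,d\mu_{m,\beta}(\zeta),
\]
which holds for polynomials and extends to all of $F^{p}_{m,\beta}$ via the density results of Section \ref{prelim} together with continuity of point evaluation (which itself follows from a preliminary coarse bound). H\"older's inequality with the conjugate pair $(p,p')$ then yields $|f(z)|\le \|f\|_{F^{p}_{m,\beta}}\,\|K_{m,\beta,z}\|_{F^{p'}_{m,\beta}}$, and applying (\ref{estimate kern in F}) at exponent $p'$ (using $1-1/p'=1/p$) bounds the kernel norm by $\lesssim (1+|z|)^{\tau_{p}}e^{\beta|z|^{2m}/2}$, from which the claim follows.

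The hard part will be to arrange the constant to be independent of $m$, since (\ref{estimate kern in F}) carries a prefactor of the form $m^{d-(d+1)/p'}$ which need not be uniformly bounded in $m\ge 1$. To bypass this I would replace the H\"older step by plurisubharmonicity of $|f|^{p}$ and an iterated sub-mean-value inequality on an anisotropic polydisk $P(z,r_{1},\dots,r_{d})$ with radii $r_{1}\asymp 1/\bigl(m(1+|z|)^{2m-1}\bigr)$ in the direction of $z$ and $r_{j}\asymp 1/\bigl(\sqrt{m}(1+|z|)^{m-1}\bigr)$ in the transverse complex directions. These radii are calibrated precisely so that the linear and quadratic terms in the expansion of $|\zeta|^{2m}-|z|^{2m}$ on $P$ are each uniformly bounded in $m$; the sub-mean-value bound $|f(z)|^{p}\le v(P)^{-1}\int_{P}|f|^{p}\,dv$, combined with the explicit expression for $c_{m,\beta}$ appearing in (\ref{dmu cm alpha}), should then produce the sharp exponent $\tau_{p}$ with a constant depending only on $d,\beta,p$, as claimed.
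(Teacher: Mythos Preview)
Your first route via the reproducing formula is logically delicate here: in this paper the reproducing identity on $F^{p}_{m,\beta}$ for general $p$ (Proposition~\ref{reproducing ppty}) is itself established \emph{using} the present pointwise estimate, in order to place the dilates $f_r$ in $F^{2}_{m,\beta}$. You can break the loop with a preliminary coarse bound as you suggest, but---as you already observe---the H\"older step then inherits the prefactor $m^{d-(d+1)/p'}$ from (\ref{estimate kern in F}) and is not uniform in $m$.

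The real gap is in the anisotropic polydisk step. Your radii $r_{1}\asymp \bigl(m(1+|z|)^{2m-1}\bigr)^{-1}$ and $r_{j}\asymp \bigl(\sqrt{m}\,(1+|z|)^{m-1}\bigr)^{-1}$ for $j\ge 2$ do control the weight variation, but the volume is then
\[
v(P)\;\asymp\; r_{1}^{2}\prod_{j\ge 2}r_{j}^{2}\;\asymp\; m^{-(d+1)}(1+|z|)^{-2\bigl(m(d+1)-d\bigr)},
\]
so (using $c_{m,\beta}\asymp 1$) the sub-mean-value bound yields
\[
\|f(z)\|_{Y}^{p}\;\lesssim\; m^{d+1}(1+|z|)^{2m(d+1)-2d}\,e^{\frac{p\beta}{2}|z|^{2m}}\,\|f\|_{F^{p}_{m,\beta}(Y)}^{p},
\]
which exceeds the target $(1+|z|)^{2d(m-1)}e^{p\beta|z|^{2m}/2}\|f\|^{p}$ by a factor of $m^{d+1}(1+|z|)^{2m}$. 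This cannot be repaired by a different polydisk: bounding the radial weight variation forces a radial extent $\lesssim\bigl(m|z|^{2m-1}\bigr)^{-1}$, while matching the target volume requires each radius $\gtrsim |z|^{1-m}$, and these are incompatible by exactly the factor $m|z|^{m}$.

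The paper's device is to avoid comparing the weight directly. On the \emph{isotropic} ball $B=B(z,r|z|^{1-m})$, whose volume has the correct order $|z|^{-2d(m-1)}$, the kernel $K_{m,\beta}(\cdot,z)$ is holomorphic and non-vanishing for large $|z|$, so one applies the sub-mean-value inequality to the subharmonic function $w\mapsto\|f(w)\|_{Y}^{p}\,|K_{m,\beta}(w,z)|^{-p}$ rather than to $\|f\|_{Y}^{p}$ alone. The off-diagonal estimate $|K_{m,\beta}(w,z)|^{2}\asymp K_{m,\beta}(z,z)\,K_{m,\beta}(w,w)$ on $B$ (with $m$-uniform constants) turns $|K(w,z)|^{-p}$ into $\bigl(K(z,z)K(w,w)\bigr)^{-p/2}$; the diagonal asymptotic $K_{m,\beta}(w,w)\asymp m^{d}|w|^{2d(m-1)}e^{\beta|w|^{2m}}$ then converts $K(w,w)^{-p/2}$ into the Fock weight times a polynomial correction $m^{-dp/2}|w|^{-dp(m-1)}$. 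Unlike the exponential weight, this polynomial factor \emph{is} $m$-uniformly comparable to $|z|^{-dp(m-1)}$ on $B$, and the powers $m^{\pm dp/2}$ cancel between the two diagonal values, producing the $m$-independent constant.
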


\begin{proof}
For $z\neq 0$,  $B(z,r)$ denotes the ball of center $z$ and radius $r \left|z\right|^{1-m}$, with  euclidean volume $\left|B(z,r)\right|$. For large $|z|$,   the holomorphic function $w \mapsto K_{m,\beta}(w,z)$ does not vanish on
$B(z, r)$ by (\ref{asymp deri ML m > 1/2}). Thus the function $w \mapsto \left\|f(w)\right\|_{Y}^p|K_{m,\beta}	(w,z)|^{-p}$ is subharmonic in $B(z, r)$.
 Indeed if  $g:\C^d\rightarrow Y$ is  holomorphic,  the function $\zeta\mapsto \left\|g(\zeta)\right\|_Y$ is subharmonic, being the pointwise supremum of a family of subharmonic functions
$$\left\|g(\zeta)\right\|_Y=\sup_{y'\in Y^*\\
\left\|y'\right\|_{Y^*}=1}\left|\left\langle g(\zeta),y'\right\rangle_{Y, Y^*}\right|.$$
A straightforward calculation shows that,  if $|z|\rightarrow\infty$,   and  $w\in B(z, r)$, we have $ \left|w\right|\asymp\left|z\right|$ and 
$$\left|K_{m,\beta}(w,z)\right|^2\asymp K_{m,\beta}(z,z)K_{m,\beta}(w,w),$$
  (see also \cite{SY}).   Therefore,
\begin{align*}
\left\|f(z)\right\|_{Y}^p|K_{m,\beta}	(z,z)|^{-p}&\leq \frac{1}{\left|B(z, r)\right|}\int_{B(z, r)}\left\|f(w)\right\|_{Y}^p|K_{m,\beta}	(w,z)|^{-p}dv(w)\\
&\lesssim \left|z\right|^{2d(m-1)}\int_{B(z, r)}\left\|f(w)\right\|_{Y}^p|K_{m,\beta}	(w,w)K_{m,\beta}	(z,z)|^{-p/2}dv(w).
\end{align*}
Relation (\ref{asymp deri ML m > 1/2}) provides
% and the fact that for any $w\in B(z, r)$, we have $ \left|w\right|\asymp\left|z\right|$,
 %$$
%\left\|f(z)\right\|_{Y}^p \left(e^{\beta\left|z\right|^{2m}}\left|z\right|^{2d(m-1)}\right)^{-p/2}\lesssim \left|z\right|^{2d(m-1)}\int_{B(z, r)}\left\|f(w)\right\|_{Y}^p \left(e^{\beta\left|w\right|^{2m}}\left|w\right|^{2d(m-1)}\right)^{-p/2}dv(w),
%$$
%which gives 
the desired estimate.
\end{proof}

It follows from  Proposition \ref{pointwise Fp} that $F^{p}_{m,\beta}(Y)$ is a Banach space.\\

%In order to establish density results, which  are  known for the scalar Segal-Bargmann space  \cite{Zf},
% For a $Y$-valued entire function $f(\zeta)=\sum_{\nu\in \N^d} \hat{f}_{\nu} \zeta^{\nu}$, and a nonnegative integer $n$, $T_nf$ will denote the $n$-th Taylor polynomial of $f$, that is 
%\begin{align}\label{n th Taylor}
%	T_nf(\zeta)&=\sum_{\left|\nu\right|\leq n}\frac{1}{\nu!}\frac{\partial^{\nu}f}{\partial \zeta^{\nu}}\left(0\right)\zeta^{\nu}\\
%	&=\sum_{\left|\nu\right|\leq n}\hat{f}_{\nu} \zeta^{\nu}.
%\end{align}
 Now, recall the definition of 
the surface area integral means of a function $f$
\begin{equation*}
%\label{Mfr}
	 M^{p}_{p}(f,r):=\int_{\mathbb{S}^{2d-1}}\left\|f(r\zeta)\right\|^p_Yd\sigma(\zeta),\  0<p<\infty,\  r>0,
\end{equation*}
$$ M_{\infty}(f,r):=\sup_{|z|=r}\left\|f(z)\right\|_Y,$$
and the dilates of $f$, defined by  $f_r(\zeta)=f(r\zeta)$, $0<r<1$. The next result is known for the scalar Segal-Bargmann space (see \cite{Zf}), and its proof is standard; however, we give it for the sake of completeness.
%In our proof, $C$ is a  constant that may change from line to line.

\begin{prop}\label{density dilations }
Let $\beta>0$ , $1\leq p< \infty$ and let $Y$ be a Banach space.

\begin{enumerate}
	\item [(a)] If  $f$ is in $F^{p}_{m,\beta}(Y)$, then $\left\|f-f_r\right\|_{F^{p}_{m,\beta}(Y)}$ tends to $0$ as $r\rightarrow 1^-.$\\
	\item [(b)] If  $f$ is in $F^{\infty,0}_{m,\beta}(Y)$, then $\left\|f-f_r\right\|_{F^{\infty}_{m,\beta}(Y)}$ tends to $0$ as $r\rightarrow 1^-.$\\.
\end{enumerate}
\end{prop}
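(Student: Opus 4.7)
The plan is a standard $\varepsilon/2$ splitting argument on a ball $\{|\zeta|\le R\}$ and its complement. On the ball, $f$ is continuous, so the dilates $f_r$ converge to $f$ uniformly on $\overline{B(0,R)}$ as $r\to 1^-$; this handles the compact part of the integral (resp.\ supremum) in both (a) and (b) once $R$ has been chosen. Everything therefore hinges on giving an $r$-uniform bound on the tail that vanishes as $R\to\infty$.

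For (a), I apply the elementary inequality $\|f(\zeta)-f(r\zeta)\|_Y^p\le 2^{p-1}(\|f(\zeta)\|_Y^p+\|f(r\zeta)\|_Y^p)$. The first summand has vanishing tail by absolute continuity of the integral defining $\|f\|_{F^{p}_{m,\beta}(Y)}$. For the second summand, I substitute $\xi=r\zeta$ to obtain
\[
c_{m,\beta}\int_{|\zeta|\ge R}\|f(r\zeta)\|_Y^p\, e^{-\frac{p\beta}{2}|\zeta|^{2m}}\,dv(\zeta)= r^{-2d}\, c_{m,\beta}\int_{|\xi|\ge rR}\|f(\xi)\|_Y^p\, e^{-\frac{p\beta}{2r^{2m}}|\xi|^{2m}}\,dv(\xi).
\]
Since $r\le 1$ implies $r^{-2m}\ge 1$, the new weight dominates $e^{-p\beta|\xi|^{2m}/2}$; restricting to $r\in[\tfrac12,1]$ bounds the right-hand side by $2^{2d}\int_{|\xi|\ge R/2}\|f(\xi)\|_Y^p\, e^{-p\beta|\xi|^{2m}/2}\,dv(\xi)$, which vanishes as $R\to\infty$ independently of $r$. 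Given $\varepsilon>0$, I first fix $R$ large enough that both tail contributions are $<\varepsilon/2$ for every $r\in[\tfrac12,1]$, and then shrink $1-r$ so that the integral over $\overline{B(0,R)}$ is also $<\varepsilon/2$.

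For (b), the sup-norm analogue is even simpler. The hypothesis $f\in F^{\infty,0}_{m,\beta}(Y)$ furnishes $R_0$ with $\|f(\xi)\|_Y e^{-\beta|\xi|^{2m}/2}<\varepsilon$ for $|\xi|\ge R_0$. Take $R=2R_0$: for $|\zeta|\ge R$ and $r\in[\tfrac12,1]$ we have $|r\zeta|\ge R_0$, and because $r^{2m}\le 1$,
\[
\|f(r\zeta)\|_Y\, e^{-\frac{\beta}{2}|\zeta|^{2m}}\le \|f(r\zeta)\|_Y\, e^{-\frac{\beta}{2}|r\zeta|^{2m}}<\varepsilon,
\]
with an identical bound for $\|f(\zeta)\|_Y\, e^{-\beta|\zeta|^{2m}/2}$. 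Uniform convergence of $f_r$ on $\overline{B(0,R)}$ (together with $e^{-\beta|\zeta|^{2m}/2}\le 1$) handles the compact part.

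The only point requiring a bit of care is the $r$-uniform tail estimate in (a); the change of variables combined with the inequality $r^{-2m}\ge 1$ is the mechanism that produces it. No kernel asymptotics, density of polynomials, or invocation of Proposition \ref{pointwise Fp} are needed for this argument.
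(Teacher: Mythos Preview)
Your argument is correct. For part (b) it is essentially the same as the paper's, with the cosmetic difference that you fix $r\in[\tfrac12,1]$ and take $R=2R_0$, while the paper introduces an auxiliary $r_0\in(0,1)$ and takes $R_0$ adapted to it; the underlying mechanism (use $e^{-\beta|\zeta|^{2m}/2}\le e^{-\beta|r\zeta|^{2m}/2}$ to transfer the $F^{\infty,0}$ smallness to the dilate) is identical.

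For part (a) the paper takes a slightly different, somewhat slicker route: it passes to spherical coordinates and uses the classical monotonicity of the integral means $s\mapsto M_p^p(f,s)$ for holomorphic $f$ to obtain the single dominating function $2^p\, s^{2d-1}M_p^p(f,s)e^{-p\beta s^{2m}/2}$, after which one direct application of dominated convergence finishes the proof without any $\varepsilon/2$ splitting or change of variables. Your approach trades this complex-analytic input for a purely real-variable argument: the substitution $\xi=r\zeta$ together with $r^{-2m}\ge 1$ replaces the monotonicity of integral means by the monotonicity of the weight. Both yield the required $r$-uniform tail control; the paper's version is shorter, while yours is more self-contained in that it does not invoke the increasing property of $M_p^p$.
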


\begin{proof}
%Set ?
%$$ c=2d c_{m,\beta}  $$
 By integration in spherical coordinates, $\left\|f-f_r\right\|^{p}_{F^{p}_{m,\beta}(Y)}$ is a constant multiple of
$$\int^{+\infty}_{0}s^{2d-1} M^{p}_{p}(f-f_r,s) e^{-\frac{\beta}{2}ps^{2m}}ds. $$ 
The function $ r\mapsto M^{p}_{p}(g,r)$ is
 increasing,
 %( a mettre? see Krantz p 104 l 2.1.17), 
thus 
$$ M^{p}_{p}(f-f_r,s)\leq 2^p M^{p}_{p}(f,s). $$
Since  for any $s>0$,
$$\lim_{r\rightarrow 1^-}M^{p}_{p}(f-f_r,s)=0,$$
the dominated convergence theorem gives (a).

In order to prove (b), take $f\in F^{\infty,0}_{m,\beta}(Y)$,   $\epsilon>0$ and $0< r_0< 1$. We have
$$\sup_{|z|>R_0 r_0}\left\|f(z)\right\|_{Y} e^{-\frac{\beta}{2}\left|z\right|^{2m}} \leq\frac{\epsilon}{4} $$
for some $R_0>0$. 
Since $f_r $ converges uniformly to $f$ on compact sets,  there exists $r_1<1$ such that 
$$ \sup_{|z|\leq R_0 }\left\|f(z)-f_r(z)\right\|_{Y}e^{-\frac{\beta}{2}\left|z\right|^{2m}}\leq \frac{\epsilon}{2}\ \text{, whenever } r_1<r<1.$$
Besides, if $|z|>R_0$ and $1>r>\max(r_0,r_1)$, we have 
$$\left\|f(z)-f_r(z)\right\|_{Y} e^{-\frac{\beta}{2}\left|z\right|^{2m}} \leq\left\|f(z)\right\|_{Y} e^{-\frac{\beta}{2}\left|z\right|^{2m}}+\left\|f_r(z)\right\|_{Y} e^{-\frac{\beta}{2}\left|z\right|^{2m}}\leq \frac{\epsilon}{2}.$$
%& \leq \frac{\epsilon}{4}+\sup_{|z|>R_0 }\left\|f(rz)\right\|_{Y} e^{-\frac{\beta}{2}\left|z\right|^{2m}}\\
%& \leq \frac{\epsilon}{4}+\sup_{|z|>R_0 }\left\|f(rz)\right\|_{Y} e^{-\frac{\beta}{2}\left|rz\right|^{2m}}\\
%& \leq \frac{\epsilon}{4}+\sup_{|z|>R_0 r_0}\left\|f(z)\right\|_{Y} e^{-\frac{\beta}{2}\left|z\right|^{2m}}\\
%& \leq \frac{\epsilon}{2},
Now (b) follows from the inequality
$$\left\|f-f_r\right\|_{F^{\infty}_{m,\beta}(Y)}\leq  \sup_{|z|\leq R_0 }\left\|f(z)-f_r(z)\right\|_{Y}e^{-\frac{\beta}{2}\left|z\right|^{2m}}+\sup_{|z|> R_0 }\left\|f(z)-f_r(z)\right\|_{Y}e^{-\frac{\beta}{2}\left|z\right|^{2m}}.$$\\
\end{proof}
When $\HH$ is a Hilbert space, the orthogonality of monomials implies that every $f$ in $F^{2}_{m,\beta}(\HH)$ can be approximated by its Taylor polynomials.
For a general Banach space $Y$, the next proposition shows that  holomorphic polynomials are dense in $F^{p}_{m,\beta}(Y)$ for $1\leq p< \infty$, as well as  in $F^{\infty,0}_{m,\beta}(Y)$. The proof relies on convolutions with Fej\'er kernels and  may be known to specialists, but we include it for the sake of completeness.

\begin{prop}\label{density pol in F}
   Let $F\in\left\{F^{p}_{m,\beta}(Y), \ F^{\infty,0}_{m,\beta}(Y) \right\}$, for $1\leq p< \infty$. 
   If  $f$ is in $F$, there exists a sequence of holomorphic $Y$-valued  polynomials $(p_n)_n$ such that
	$ \left\|f-p_n\right\|_{F}$ tends to $0$ as $n\rightarrow +\infty.$

\end{prop}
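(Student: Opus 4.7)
The plan is to combine the density of dilates with a Fej\'er-type summation argument. Given $f\in F$ and $\varepsilon>0$, I first invoke Proposition \ref{density dilations } to choose $r\in(0,1)$ with $\|f-f_r\|_F<\varepsilon/2$, so the problem reduces to approximating the dilate $f_r$ in the $F$-norm by holomorphic $Y$-valued polynomials. The key feature of $f_r$ is that, by (the rescaled form of) Proposition \ref{pointwise Fp}, $\|f_r(z)\|_Y\lesssim (1+|z|)^{\tau_p}e^{(\beta/2)r^{2m}|z|^{2m}}$, so $f_r$ enjoys Gaussian decay of the form $e^{-(\beta/2)(1-r^{2m})|z|^{2m}}$ relative to the weight of $F$.

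Next, I introduce the $d$-dimensional Fej\'er kernel $F_N$ on $\T^d$ and, writing $R_\theta z=(e^{i\theta_1}z_1,\ldots,e^{i\theta_d}z_d)$ for $\theta\in\T^d$, define the Fej\'er means
\[
\sigma_N g(z):=\int_{\T^d} F_N(\theta)\, g(R_\theta z)\,d\theta.
\]
Integrating the Taylor expansion $f_r(z)=\sum_\nu r^{|\nu|}a_\nu z^\nu$ term-by-term against $F_N$ gives
\[
\sigma_N f_r(z)=\sum_{0\le\nu_j\le N}\prod_{j=1}^d\left(1-\frac{\nu_j}{N+1}\right) r^{|\nu|}a_\nu z^\nu,
\]
a holomorphic $Y$-valued polynomial of degree at most $Nd$.

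The uniform bound $\|\sigma_N g\|_F\leq\|g\|_F$ follows from Minkowski's integral inequality, together with the rotational invariance of $d\mu_{m,\beta}$ under each $R_\theta$ and the fact that $F_N\geq 0$ is a probability kernel. To conclude $\sigma_N f_r\to f_r$ in $F$, I plan to show that $\theta\mapsto f_r\circ R_\theta$ is continuous from $\T^d$ into $F$; the classical Fej\'er convergence theorem for continuous Banach-space-valued functions on $\T^d$ then yields the limit at $\theta=0$. For $F=F^{p}_{m,\beta}(Y)$, the continuity comes from dominated convergence applied to $\|f_r(R_\theta z)-f_r(z)\|_Y^p e^{-(p\beta/2)|z|^{2m}}$, whose pointwise limit is $0$ and which is dominated by $C(1+|z|)^{p\tau_p}e^{(p\beta/2)(r^{2m}-1)|z|^{2m}}\in L^1(dv)$; for $F=F^{\infty,0}_{m,\beta}(Y)$ the same dominant vanishes at infinity, while uniform convergence on compact sets handles bounded $|z|$.

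The step I expect to be the main obstacle is precisely this $F$-norm continuity of the rotation action. The strict inequality $r<1$ is essential, because it supplies a dominating bound with Gaussian decay strictly better than the weight of $F$, which is exactly what drives both the dominated convergence in the $F^p$ case and the uniform decay at infinity in the $F^{\infty,0}$ case. With the continuity in hand, choosing $N$ large gives $\|f_r-\sigma_N f_r\|_F<\varepsilon/2$, and $p_N:=\sigma_N f_r$ is then a holomorphic polynomial satisfying $\|f-p_N\|_F<\varepsilon$.
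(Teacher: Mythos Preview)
Your proof is correct and shares the paper's core strategy: Fej\'er means on $\T^d$ together with continuity of the rotation action $\theta\mapsto g\circ R_\theta$ in the $F$-norm. The one structural difference is that the paper applies the Fej\'er averaging directly to $f$ rather than to a dilate; it proves the rotation continuity for $f$ itself, using in the $F^p$ case dominated convergence combined with the rotation invariance $\|f\circ R_\theta\|_{F^p}=\|f\|_{F^p}$ (which controls the tails uniformly in $\theta$), and in the $F^{\infty,0}$ case the defining decay of $f$ together with rotation invariance. Your preliminary reduction to $f_r$ via Proposition~\ref{density dilations } is therefore unnecessary, but it buys you something: the strict inequality $r<1$ produces an explicit $\theta$-independent pointwise dominant with genuine Gaussian decay, so the dominated-convergence step becomes completely transparent in both the $F^p$ and the $F^{\infty,0}$ cases, avoiding the mild subtlety in the paper's one-line appeal to DCT.
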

\begin{proof}
%We will approximate a function $f\in F$ by  . 
If $\left(\theta_1,\cdots,\theta_d \right)\in \mathbb{R}^d$, consider the unitary linear transformation in $\mathbb{C}^d$ defined by
$R_{\theta}(z):=\left(e^{i\theta_1}z_1,\cdots,e^{i\theta_d}z_d\right)  $, for all $z=\left(z_1,\cdots,z_d\right)\in \C^d$. 
The torus $\T^d=\left\{\left(e^{i\theta_1},\cdots,e^{i\theta_d}\right),\ \left(\theta_1,\cdots,\theta_d\right)\in\left[-\pi,\pi\right]^d\right\}$ is equipped with the Haar measure $d\theta$, and, for a nonnegative integer $N$,  the Fej\'er kernel $F_N$ (see \cite{Gra}) is given by
\begin{equation*}
%\label{Fejer}
F_N\left(e^{i\theta_1},\cdots,e^{i\theta_d}\right):=\sum_{\left|m_j\right|\leq N,\\
m\in \Z}\left(1-\frac{\left|m_1\right|}{N+1}\right)\cdots \left(1-\frac{\left|m_d\right|}{N+1}\right)e^{i m\cdot \theta},
\end{equation*}
where $m\cdot \theta=m_1\theta_1+\cdots+m_d\theta_d $.
  %($F_N$ is an approximation of the identity on  $\T^d$). 
	The convolution 
\begin{equation}\label{convolution}
f_N(z)=\int_{\T^d}f\left( R_{-\theta}z\right)F_N\left(e^{i\theta_1},\cdots,e^{i\theta_d}\right)d\theta,\ z\in\C^d,
\end{equation}
is then a holomorphic polynomial with coefficients in $Y$, which belongs to $F$, and
   we have
$$  f_N-f=\int_{\T^d}(f \circ R_{-\theta}-f) F_N\left(e^{i\theta_1},\cdots,e^{i\theta_d}\right)d\theta. $$
Our aim is to prove that
\begin{equation}
\label{lim poly}
	\lim_{N\rightarrow\infty}\left\|f_N-f\right\|_F=0.
\end{equation}
To do so, we first observe that 
\begin{eqnarray*}
%\label{fejer00}
 \left\|f_N-f\right\|_F&\leq &\left(\int_{ V_\delta }+ \int_{\T^d\setminus V_\delta }\right)\left\|f \circ R_{-\theta}-f\right\|_F F_N\left(e^{i\theta_1},\cdots,e^{i\theta_d}\right)d\theta,
%&\leq &  \nonumber
\end{eqnarray*}
where $V_\delta$  $(\delta>0)$ denotes the set
%neighborhood of $0$ given by
$V_\delta:=\left\{\left(\theta_1,\cdots,\theta_d\right)\ :\ \ \left|\theta_j\right|\leq \delta,\, 1\le j\le d\right\}. $
 It is then enough to show that 
\begin{eqnarray}\label{fejer01}
\lim_{\delta\rightarrow 0} \sup_{\theta\in V_\delta} \left\|f \circ R_{-\theta}-f\right\|_F=0.
\end{eqnarray}
Indeed, suppose (\ref{fejer01}) holds and take $\varepsilon>0$. 
Due to the rotation invariance, we obtain
$$ \left\|f_N-f\right\|_F\leq \sup_{\theta\in V_\delta} \left\|f \circ R_{-\theta}-f\right\|_F\int_{ V_\delta }  F_N d\theta+
 2\left\|f\right\|_F\int_{\T^d\setminus V_\delta } F_N d\theta.
 $$
%By the  properties of the approximate identity $F_N$,  
One can choose $  \delta>0$ such that 
\begin{eqnarray*}
%\label{fejer02}
\sup_{\theta\in V_\delta} \left\|f \circ R_{-\theta}-f\right\|_F\leq \varepsilon,
\end{eqnarray*}
and an integer $N_0$ such that 
\begin{eqnarray*}
%\label{fejer03}
 \int_{\T^d\setminus V_\delta }F_N\left(e^{i\theta_1},\cdots,e^{i\theta_d}\right)d\theta\leq\varepsilon\ \text{ for }N\geq N_0,
\end{eqnarray*}
which implies (\ref{lim poly}).

It remains to show (\ref{fejer01}), which derives from the dominated convergence theorem when $F=F^{p}_{m,\beta}(Y)$.\\
 Now, let  $f$ be in  $F=F^{\infty,0}_{m,\beta}(Y)$, 
%\begin{align*}
%\left\|f \circ R_{-\theta}-f\right\|_F \leq  \left(\sup_{\left|z\right|\leq R}+\sup_{\left|z\right|> R}\right)\left\|\left[f \circ R_{-\theta}(z)-f(z) \right]e^{-\frac{\beta}{2}\left|z\right|^{2m}}\right\|_Y.
%\end{align*}
and $\varepsilon>0$.  
%\begin{eqnarray}\label{fejer05}
%\sup_{\left|z\right|> R}\left\|f(z) e^{-\frac{\beta}{2}\left|z\right|^{2m}}\right\|_Y\leq \varepsilon,
%\end{eqnarray}
%and by 
  By the  rotation invariance, there exists $R>0$ such that
\begin{eqnarray*}
%\label{fejer06}
\sup_{\left|z\right|> R}\left\|\left[f \circ R_{-\theta}(z)-f(z) \right]e^{-\frac{\beta}{2}\left|z\right|^{2m}}\right\|_Y\leq \varepsilon,
\end{eqnarray*}
uniformly in $\theta$. Since $f$ is uniformly continuous   on the compact set $\left\{z\in\C^d,\ \left|z\right|\leq R\right\}$,
% we have 
%\begin{eqnarray}\label{fejer06}
%\lim_{\theta\rightarrow 0}\sup_{\left|z\right|\leq R}\left\|\left[f \circ R_{-\theta}(z)-f(z) \right]e^{-\frac{\beta}{2}\left|z\right|^{2m}}\right\|_Y=0,
%\end{eqnarray}
%and thus 
we can choose $\delta>0$ such that for every $\theta\in V_\delta$ 
\begin{eqnarray*}
%\label{fejer07}
\sup_{\left|z\right|\leq R}\left\|\left[f \circ R_{-\theta}(z)-f(z) \right]e^{-\frac{\beta}{2}\left|z\right|^{2m}}\right\|_Y \leq \varepsilon.
\end{eqnarray*}
It follows that, whenever $\theta\in V_\delta$ ,
\begin{eqnarray*}
%\label{fejer08}
%\left\|f \circ R_{-\theta}-f\right\|_F = 
 \sup_{z\in\C^d}\left\|\left[f \circ R_{-\theta}(z)-f(z) \right]e^{-\frac{\beta}{2}\left|z\right|^{2m}}\right\|_Y\leq 2\varepsilon,
\end{eqnarray*}
which gives (\ref{fejer01}), and completes the proof.

\end{proof}

For  a Banach space $Y$,  we  consider the mapping $P_{\beta}: L^{p}_{m,\beta}( Y)\rightarrow F^{p}_{m,\beta}( Y)$, defined by
\begin{equation*}
%\label{def P beta}
P_{\beta}f(z):=\int_{\C^d}f(\zeta)K_{m,\beta}(z,\zeta)d\mu_{m,\beta}(\zeta),\ z\in \C^d.
\end{equation*}
 The  case when $m=1$ and $Y=\C$ has been studied in \cite{JPR,DZ}. However,  due to the lack of an appropriate group  of automorphisms  when $m\neq 1$, we need another approach for the spaces $F^{p}_{m,\alpha}(Y)$.\\

\begin{prop}\label{reproducing ppty}
Let $1\leq p\leq \infty$. Then $P_{\beta}: L^{p}_{m,\beta}( Y)\rightarrow F^{p}_{m,\beta}( Y)$ is bounded, and for  $f$ in $F^{p}_{m,\beta}(Y)$, the following reproducing formula holds
\begin{equation}\label{reproducing formula}
P_{\beta}f=f.
	%f(z)=P_{\beta}f(z)=\int_{\C^d}f(\zeta)K_{m,\beta}(z,\zeta)d\mu_{m,\beta}(\zeta),\ z\in \C^d.
\end{equation}
\end{prop}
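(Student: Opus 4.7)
\emph{Plan.} The proof splits into two independent parts, which I would carry out in order: first the $L^p$ boundedness of $P_\beta$ for every $1\leq p\leq\infty$, then the reproducing identity $P_\beta f=f$ on $F^{p}_{m,\beta}(Y)$. The boundedness will follow from Schur's test applied to a weight-normalised kernel; the reproducing identity will be established first on holomorphic polynomials, from the series expansion of $K_{m,\beta}$, and then transferred to $F^{p}_{m,\beta}(Y)$ by density for $p<\infty$ and by a dilation trick for $p=\infty$.

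\emph{Boundedness.} The substitution $g(\zeta)=f(\zeta)\,e^{-\frac{\beta}{2}|\zeta|^{2m}}$ identifies the norm of $L^{p}_{m,\beta}(Y)$ (up to the constant $c_{m,\beta}^{1/p}$) with the norm of $L^{p}(dv,Y)$, and turns $P_\beta$ into the integral operator with scalar kernel
\[
\widetilde K(z,\zeta)=c_{m,\beta}\,K_{m,\beta}(z,\zeta)\,e^{-\frac{\beta}{2}(|z|^{2m}+|\zeta|^{2m})}.
\]
I would apply Lemma \ref{int blp beta} with $p=1$, $l=d$, $c=0$, $\alpha=\beta$, and weight parameter $\beta/2$, together with the contribution from the complement of $S_R(z)$ furnished by the same lemma, to obtain the kernel estimate
\[
\int_{\C^d}|K_{m,\beta}(z,\zeta)|\,e^{-\frac{\beta}{2}|\zeta|^{2m}}\,dv(\zeta)\lesssim m^{-1}\,e^{\frac{\beta}{2}|z|^{2m}},
\]
which gives $\sup_z\int|\widetilde K(z,\zeta)|\,dv(\zeta)\leq C$. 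The conjugate symmetry $|K_{m,\beta}(z,\zeta)|=|K_{m,\beta}(\zeta,z)|$ yields the companion bound in the other variable, so Schur's test with the constant weight $h\equiv 1$ gives the boundedness on $L^p(dv)$ for every $1<p<\infty$, and the two endpoint kernel bounds directly handle $p=1$ (Fubini) and $p=\infty$. The vectorial case reduces to the scalar one via $\|P_\beta f(z)\|_Y\leq\int\|f(\zeta)\|_Y\,|K_{m,\beta}(z,\zeta)|\,d\mu_{m,\beta}(\zeta)$, and holomorphy of $P_\beta f$ follows from the standard differentiation-under-the-integral argument.

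\emph{Reproducing formula.} Orthogonality of the monomials $v_\nu(\zeta)=\zeta^\nu$ in $F^{2}_{m,\beta}$ gives the series expansion $K_{m,\beta}(z,\zeta)=\sum_\nu s_{\beta,\nu}^{-1}\,v_\nu(z)\,\overline{v_\nu(\zeta)}$, whence $P_\beta v_\nu=v_\nu$ and, by linearity, $P_\beta p=p$ for every $Y$-valued holomorphic polynomial $p$. For $1\leq p<\infty$, Proposition \ref{density pol in F} supplies polynomials $p_n\to f$ in $F^{p}_{m,\beta}(Y)$, and the boundedness just established yields $P_\beta f=\lim P_\beta p_n=\lim p_n=f$. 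The main obstacle will be the case $p=\infty$, where polynomials are \emph{not} dense in $F^{\infty}_{m,\beta}(Y)$. Here I would use the dilates $f_r(z)=f(rz)$, $0<r<1$: the elementary bound
\[
\|f_r(z)\|_Y\,e^{-\frac{\beta}{2}|z|^{2m}}\leq \|f\|_{F^{\infty}_{m,\beta}(Y)}\,e^{-\frac{\beta}{2}(1-r^{2m})|z|^{2m}}
\]
shows that $f_r\in F^{\infty,0}_{m,\beta}(Y)$, a space in which polynomials \emph{are} dense, so $P_\beta f_r=f_r$ by the argument above. Letting $r\to 1^{-}$, Proposition \ref{pointwise Fp} and the kernel bound above provide the integrable majorant $\|f\|_{F^{\infty}_{m,\beta}(Y)}\,e^{\frac{\beta}{2}|\zeta|^{2m}}|K_{m,\beta}(z,\zeta)|\in L^{1}(d\mu_{m,\beta})$, and the dominated convergence theorem yields $P_\beta f(z)=\lim P_\beta f_r(z)=\lim f_r(z)=f(z)$ pointwise, completing the proof.
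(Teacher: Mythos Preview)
Your proof is correct. For boundedness you make explicit the Schur-type argument that the paper delegates to the reference [BEY]; the kernel bound you extract from Lemma~\ref{int blp beta} (with $l=d$, $p=1$, $c=0$) is exactly the one needed for the constant-weight Schur test, and the symmetry of $|K_{m,\beta}|$ gives the companion bound.

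For the reproducing formula your organisation genuinely differs from the paper's. The paper first establishes $P_\beta f=f$ directly for every $f\in F^{2}_{m,\beta}(Y)$ by expanding the kernel in the monomial basis, truncating the integral to $\{|\zeta|\le R\}$, using the locally uniform convergence of the Taylor series of $f$ to compute $\int_{|\zeta|\le R} f(\zeta)\overline{\zeta}^{\nu}\,d\mu_{m,\beta}$, and letting $R\to\infty$. It then treats \emph{all} $1\le p\le\infty$ at once by observing, via Proposition~\ref{pointwise Fp}, that each dilate $f_r$ of $f\in F^{p}_{m,\beta}(Y)$ lies in $F^{2}_{m,\beta}(Y)$, and passing to the limit $r\to 1^{-}$ by dominated convergence. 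You instead check the identity on polynomials, use Proposition~\ref{density pol in F} together with the $L^{p}$-boundedness to extend to $F^{p}_{m,\beta}(Y)$ for $p<\infty$, and for $p=\infty$ dilate into $F^{\infty,0}_{m,\beta}(Y)$ (where polynomials \emph{are} dense) rather than into $F^{2}$. Your path for finite $p$ is arguably tidier since the density statement is already available, while the paper's single dilation-to-$F^{2}$ argument has the advantage of handling all exponents uniformly without a case split.
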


\begin{proof}
The boundedness of  $P_{\beta}$ on $L^{p}_{m,\beta}( Y)$ is a consequence of  the estimates (\ref{estimate kern in F}) for the kernel. Indeed, the arguments of Theorem 1 in \cite{BEY} and the remark thereafter remain valid when $Y$ is a Banach space.
For $f$ in $L^{p}_{m,\beta}( Y)$, the analyticity of $P_{m,\beta}f$ is deduced from the scalar case  via linear functionals.

The reproducing formula is well known in the Hilbert case $F^{2}_{m,\beta}(\HH)$. For $Y$  a Banach space, we first consider the case when $f(\zeta)=\sum_{\nu\in\N^d}\hat f_{\nu}\zeta^{\nu}$ is in $ F^{2}_{m,\beta}(Y)$.
% Using that
%$$ P_{\beta}f(z)=\int_{\C^d}f(\zeta)\sum_{\nu\in\N^d} \frac{z^{\nu}\overline \zeta^{\nu}}{s_{\beta,\nu}}  d\mu_{m,\beta}(\zeta), $$
%Since
%$$ \frac{\left|z^{\nu}\right|}{s_{\beta,\nu}}\int_{\C^d}  \left\|f(\zeta)\right\|_Y\left|\zeta^{\nu}\right| d\mu_{m,\beta}(\zeta)\leq \frac{\left|z^{\nu}\right|}{s_{\beta,\nu}}\left\|f\right\|_{F^{2}_{m,\beta}(\C^d,Y)}s_{\beta,\nu}^{1/2}=\frac{\left|z^{\nu}\right|}{s_{\beta,\nu}^{1/2}}\left\|f\right\|_{F^{2}_{m,\beta}(\C^d,Y)},$$
 %Stirling's formula shows that the series
%$$ \sum \frac{\left|z^{\nu}\right|}{s_{\beta,\nu}}\int_{\C^d}  \left\|f(\zeta)\right\|_Y\left|\zeta^{\nu}\right| d\mu_{m,\beta}(\zeta)$$
%is convergent and thus by 
If  $K_{m,\beta}(z,\zeta)$ is replaced by its expansion, the dominated convergence theorem  yields
\begin{equation}\label{P beta f}
	 P_{\beta}f(z)=\sum_{\nu\in\N^d} \frac{z^{\nu}}{s_{\beta,\nu}}\int_{\C^d}f(\zeta) \overline \zeta^{\nu}  d\mu_{m,\beta}(\zeta),\ z\in\C^d.
\end{equation}
%Integration in polar coordinates gives
%\begin{align*}
%\int_{\C^d}f(\zeta) \overline \zeta^{\nu}  d\mu_{m,\beta}(\zeta)&=\int^{+\infty}_{0}2dr^{2d-1 +\left|\nu\right|}\int_{\mathbb{S}^{2d-1}}f(r\xi)\overline \xi^{\nu}d\sigma(\xi)e^{-\beta r^{2m}}dr.\\
%&=\int^{+\infty}_{0}2r^{2d-1 +\left|\nu\right|}\int_{\mathbb{S}^{2d-1}}\sum_{\nu'\in\N^d}\hat f_{\nu'}r^{\nu'}\xi^{\nu'}\overline \xi^{\nu}d\sigma(\xi)e^{-\beta r^{2m}}dr\\
%\end{align*}
%For each $\nu \in\N^d$, the uniform convergence of the power series of $f$ on compact sets gives
%\begin{align*}
%\int_{\mathbb{S}^{2d-1}}f(r\xi)\overline \xi^{\nu}d\sigma(\xi)&=\int_{\mathbb{S}^{2d-1}}\sum_{\nu'\in\N^d}\hat f_{\nu'}r^{\left|\nu'\right|}\xi^{\nu'}\overline \xi^{\nu}d\sigma(\xi)\\
%&=\sum_{\nu'\in\N^d}\hat f_{\nu'}r^{\left|\nu'\right|}\int_{\mathbb{S}^{2d-1}}\xi^{\nu'}\overline \xi^{\nu}d\sigma(\xi)\\
%&=\hat f_{\nu}r^{\left|\nu\right|}\int_{\mathbb{S}^{2d-1}} \left|\xi^{\nu}\right|^2 d\sigma(\xi)\\
%\end{align*}
%and it follows  that formula \ref{reproducing formula} is true for all $f\in F^{2}_{m,\beta}(\C^d,Y)$.
Using the uniform convergence of the power series of $f$ on compact sets, and integrating in spherical coordinates, we get that, for $R>0$,
\begin{align*}
\int_{\left|\zeta\right|\leq R}f(\zeta) \overline \zeta^{\nu}  d\mu_{m,\beta}(\zeta)&=\sum_{\nu'\in\N^d}\hat f_{\nu'}\int_{\left|\zeta\right|\leq R}\zeta^{\nu'}  \overline \zeta^{\nu}  d\mu_{m,\beta}(\zeta)
=\hat f_{\nu}\int_{\left|\zeta\right|\leq R}\left| \zeta^{\nu} \right|^2 d\mu_{m,\beta}(\zeta).
\end{align*}
Taking limit as $R\rightarrow+\infty$, and combining with   (\ref{P beta f}), we see that $ P_{\beta}f(z)=f(z)$.

 Now if $f$ is in $F^{p}_{m,\beta}(Y)$, the pointwise estimate of Proposition \ref{pointwise Fp}
%$$\left\|f(\zeta)\right\|_Y\lesssim \left\|f\right\|_{F^{p}_{m,\beta}(Y)}|\zeta|^{\frac{2}{p}d(m-1)}e^{\frac{\beta}{2}\left|\zeta\right|^{2m}},\ \left|\zeta\right|\rightarrow\infty,$$
shows that for $0<r<1$, the dilation $f_r$  is in $F^{2}_{m,\beta}(Y)$. We obtain  (\ref{reproducing formula}) by applying the dominated convergence Theorem and taking limit as $r\rightarrow 1$ in the  formula $P_{\beta}f_r(z)=f_r(z)$. 

\end{proof}

%The next Lemma will be used to characterize the dual space of $F^{p}_{m,\beta}(\C^d,Y)$.
Given   a scalar or $\be(X)$-valued entire function $u$, and $x\in X$, one  defines the $X$-valued entire function
\begin{equation*}
%\label{tenso fct}
(u\otimes x)(z):= u(z)x,\ z\in\C^d.
\end{equation*}
We now identify the dual space of $F^{p}_{m,\beta}(Y)$, $1\leq p<\infty$,  and of $F^{\infty,0}_{m,\beta}(Y)$.
\begin{prop}\label{dual Fp 1 leq p}
 Let $Y$ be a Banach space, and $Y^*$ its dual space.
\begin{enumerate}
	\item [(a)] For $ 1<p<\infty$, the dual space of $F^{p}_{m,\beta}(Y)$ is isometrically isomorphic to $F^{p'}_{m,\beta}(Y^*)$.
 \item [(b)] The dual space of $F^{1}_{m,\beta}(Y)$ is isometrically isomorphic to $F^{\infty}_{m,\beta}(Y^*)$.
\item  [(c)] The dual space of $F^{\infty,0}_{m,\beta}(Y)$ is isometrically isomorphic to $F^{1}_{m,\beta}(Y^*)$.
\end{enumerate}

\end{prop}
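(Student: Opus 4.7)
The three parts share a common scheme realized by the bilinear pairing $\langle\cdot,\cdot\rangle_\beta$: in each case the pairing induces a bounded linear map $T$ from the ``right-hand'' space into the dual of the ``left-hand'' space, and I must show $T$ is an isometric isomorphism.

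Boundedness and injectivity of $T$ are direct. For $y \in F^{p'}_{m,\beta}(Y^*)$ and $x \in F^p_{m,\beta}(Y)$, H\"older's inequality applied to the Bochner integrand, combined with the change of variable $\zeta \mapsto \overline\zeta$ (which preserves $d\mu_{m,\beta}$), yields $|\langle x, y\rangle_\beta| \leq \|x\|_{F^p_{m,\beta}(Y)}\,\|y\|_{F^{p'}_{m,\beta}(Y^*)}$. Testing against monomials $x = \zeta^\nu \otimes u$ with $u \in Y$ and using orthogonality in $L^2_{m,\beta}$ gives $\langle \zeta^\nu \otimes u, y\rangle_\beta = s_{\beta,\nu}\,\hat y_\nu(u)$, where $\hat y_\nu \in Y^*$ denotes the $\nu$-th Taylor coefficient of $y$; injectivity of $T$ follows immediately.

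For surjectivity in (a) and (b), given $\Phi$ in the dual, set
$$ y(z)(u) := \Phi\!\bigl(K_{m,\beta}(\cdot,\overline z)\otimes u\bigr), \qquad z\in\C^d,\ u\in Y. $$
Estimate (\ref{estimate kern in F}) ensures that $K_{m,\beta}(\cdot,\overline z)\otimes u\in F^p_{m,\beta}(Y)$ for every $1\le p\le\infty$, so $y(z)\in Y^*$ has norm at most $\|\Phi\|\,\|K_{m,\beta}(\cdot,\overline z)\|_{F^p_{m,\beta}}$; holomorphy of $y$ follows from the holomorphic dependence of $z\mapsto K_{m,\beta}(\zeta,\overline z)$ together with the continuity of $\Phi$. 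The Taylor expansion $K_{m,\beta}(\zeta,\overline z) = \sum_\nu \zeta^\nu z^\nu/s_{\beta,\nu}$ identifies $\hat y_\nu(u) = \Phi(\zeta^\nu\otimes u)/s_{\beta,\nu}$, matching $\Phi$ with $T(y)$ on polynomials; by Proposition \ref{density pol in F} the identification extends to the whole space. For (b), $\|y\|_{F^\infty_{m,\beta}(Y^*)}\le\|\Phi\|$ is immediate from the pointwise definition. For (a), the bound $\|y\|_{F^{p'}_{m,\beta}(Y^*)}\le\|\Phi\|$ is derived from the key identity
$$ \langle P_\beta f, y\rangle_\beta = \int_{\C^d} \bigl\langle f(\zeta), y(\overline\zeta)\bigr\rangle_{Y-Y^*}\,d\mu_{m,\beta}(\zeta),\qquad f\in L^p_{m,\beta}(Y), $$
proved by Fubini and the reproducing formula (\ref{reproducing formula}) applied to $y$, after exploiting the Taylor symmetry $K_{m,\beta}(\overline\zeta,\xi) = \sum_\nu \overline{\xi}^\nu\,\overline{\zeta}^\nu/s_{\beta,\nu}$; taking supremum over $\|f\|_{L^p_{m,\beta}}\le 1$ and using the isometric embedding $L^{p'}_{m,\beta}(Y^*) \hookrightarrow (L^p_{m,\beta}(Y))^*$ delivers the desired norm bound, combined with the boundedness of $P_\beta$ from Proposition \ref{reproducing ppty}.

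Part (c) is more delicate. The same definition of $y$ works because $K_{m,\beta}(\cdot,\overline z)\otimes u \in F^{\infty,0}_{m,\beta}(Y)$ by the Mittag-Leffler asymptotics (\ref{asymp deri ML m > 1/2}), but the pointwise kernel bound on $\|y(z)\|_{Y^*}$ is not integrable on $\C^d$. The remedy is to extend $\Phi$ via Hahn-Banach to the subspace of $\mathcal{C}_b(Y)$ consisting of continuous $f$ with $\|f(\zeta)\|_Y e^{-\beta|\zeta|^{2m}/2}\to 0$ as $|\zeta|\to\infty$; after isometric identification with $C_0(\C^d,Y)$ via the weight, the extended functional is represented by a $Y^*$-valued Radon measure, to which the Bergman-type projection $P_\beta$ (with its $L^1$-boundedness from Proposition \ref{reproducing ppty}) applies, producing $y \in F^1_{m,\beta}(Y^*)$ with the correct norm control. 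The main obstacle I foresee is upgrading each topological isomorphism to an \emph{isometry} when $Y^*$ does not have the Radon-Nikodym property; this should be achieved by pairing $y$ against a near-extremal sequence of holomorphic test functions constructed from $K_{m,\beta}$, exploiting that $P_\beta$ acts as the identity on $F^p_{m,\beta}(Y)$.
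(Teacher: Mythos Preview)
Your scheme for (a) and (b) is essentially the paper's: define the candidate $y$ by evaluating $\Phi$ on kernel functions, then show $y$ lies in the correct space by pairing with $L^p_{m,\beta}(Y)$ through $P_\beta$. One point needs repair, however. Your ``key identity'' cannot be justified by ``the reproducing formula applied to $y$,'' since at that stage $y$ is not yet known to belong to any $F^{p'}$ space, so Proposition~\ref{reproducing ppty} does not apply to it. The non-circular derivation (which the paper uses) is to interchange $\Phi$ with the Bochner integral defining $P_\beta f$ for compactly supported $f$, obtaining $\Phi(P_\beta f)=\int\langle f(\zeta),y(\overline\zeta)\rangle\,d\mu_{m,\beta}(\zeta)$ directly from the \emph{definition} of $y$; one then invokes the characterization of the $L^{p'}(Y^*)$-norm as a supremum over $L^p(Y)$ together with a truncation $\chi_n$ and monotone convergence. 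That last step requires a reference (e.g.\ Dinculeanu), because for a general Banach space $Y$ the embedding $L^{p'}(Y^*)\hookrightarrow (L^p(Y))^*$ need not be surjective, and one must check that the supremum formula still computes the $L^{p'}$-norm on the image.

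For (c) there is a genuine gap. Hahn--Banach plus Singer's vector-valued Riesz theorem produce a $Y^*$-valued Radon measure $\nu$ of bounded variation representing the extended functional on $C_0(\C^d,Y)$, but $P_\beta$ as defined in the paper acts on $L^p$ functions, not on measures; the $L^1$-boundedness of $P_\beta$ from Proposition~\ref{reproducing ppty} says nothing about $P_\beta\nu$, and there is no reason for $\nu$ to be absolutely continuous with an $L^1$ density. To make this route work you would need a separate argument that integrating the kernel against a finite vector measure lands in $F^1_{m,\beta}(Y^*)$ with the right norm bound, and that the singular part of $\nu$ annihilates $F^{\infty,0}_{m,\beta}(Y)$ --- neither of which follows from what is cited. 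The paper avoids this by running the same template as in (a): one checks (via density of compactly supported functions and the closedness of $F^{\infty,0}_{m,\beta}(Y)$ in $F^\infty_{m,\beta}(Y)$) that $P_\beta$ maps the ``little-oh'' weighted $L^\infty$ space boundedly into $F^{\infty,0}_{m,\beta}(Y)$, so that $\Phi\circ P_\beta$ yields a bounded functional to which the $L^1(Y^*)$-norm-as-supremum formula and the truncation argument apply. Your closing remark about the isometry is well taken; the argument as written --- both yours and the paper's --- only gives a topological isomorphism with constants coming from $\|P_\beta\|$.
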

%,  under the pairing
 %\begin{equation}\label{pairing beta}
%\left\langle g,f\right\rangle_{\beta}=\int_{\C^d}\left\langle g(\overline{\zeta}), f(\zeta)\right\rangle_{Y, Y^*} d\mu_{m,\beta}(\zeta),
%\end{equation}
%where $f\in F^{p'}_{m,\beta}(\C^d,Y^*)$ and $g\in F^{p}_{m,\beta}(\C^d,Y).$

\begin{proof}Our proof is inpired of \cite{AB,Co}. The notation  $\left\langle .,.\right\rangle_{Y-Y^*}$  is shortened into $\left\langle .,.\right\rangle$.
 First assume that $ 1<p<\infty$. By H\"older's inequality, the linear operator $M$ defined  by
\begin{equation}
\label{def M}
Mf(g):=\int_{\C^d}\left\langle  g(\zeta),f(\overline{\zeta})\right\rangle d\mu_{m,\beta}(\zeta),\ f\in F^{p'}_{m,\beta}(Y^*),\ g\in F^{p}_{m,\beta}(Y), 
\end{equation}
  is bounded from  $F^{p'}_{m,\beta}(Y^*)$ to $\left(F^{p}_{m,\beta}(Y)\right)^*$ and $\left\|M\right\|\leq 1.$ \\
	If  $f$ is in $F^{p'}_{m,\beta}(Y^*)$ and $Mf=0$, we apply $Mf$ to $K_{m,\beta,\overline z}\otimes x$ for any $z\in \C^d$ and $x\in Y$, and   the reproducing property implies that $f=0$, which shows  that $M$ is injective.\\
		%the reproducing property enhances that for any $z\in \C^d$ and $x\in Y$, 
%	$$ 0=Jf\left(K_{m,\beta,\overline z}\otimes x\right)= \left\langle  x,f(z)\right\rangle,$$
%which shows that $f=0$.
In order to prove the surjectivity, take $\Phi$ a bounded linear functional on $F^{p}_{m,\beta}(Y)$, and define the $ Y^*$-valued entire function $\phi$  by
\begin{equation*}
%\label{def phi}
\left\langle  x ,\phi({z}) \right\rangle=\Phi\left( K_{m,\beta, \overline z}\otimes x\right),\ x\in Y,\ z\in \C^d .
\end{equation*}
Our aim is to show that $\phi$ is in $L^{p'}_{m,\beta}(Y^*)$  and that $\Phi=M\phi$. To do so, we set $\phi_1(\zeta)=\phi(\zeta)c_{m,\beta}e^{-\frac{\beta}{2}\left|\zeta\right|^{2m}} $, and  consider the integrals 
\begin{equation*}
%\label{def phi1}
\Phi_1(h)=\int_{\C^d}\left\langle  h(\zeta),\phi_1( \overline\zeta)\right\rangle dv(\zeta),\ h\in L^p(\C^d,dv,Y).
\end{equation*}
   For  any  continuous compactly supported $Y$-valued function  $h$,  we see that
$$  \Phi_1(h)
%=\int_{\C^d}\left\langle h(\zeta),\phi( \overline\zeta)\right\rangle c_{m,\beta}e^{-\frac{\beta}{2}\left|\zeta\right|^{2m}} dv(\zeta)
=\Phi\left(\int_{\C^d}K_{m,\beta,\zeta}\otimes  h_1(\zeta) d\mu_{m,\beta}(\zeta)\right)= \Phi P_{\beta}h_1,$$
where   $h_1(\zeta)=h(\zeta)e^{\frac{\beta}{2}\left|\zeta\right|^{2m}}$. 
%\begin{align*}
%\Phi_1(h)&=\int_{\C^d}\left\langle h(\zeta),\phi( \overline\zeta)\right\rangle c_{m,\beta}e^{-\frac{\beta}{2}\left|\zeta\right|^{2m}} dv(\zeta)\\
%&=\int_{\C^d}\Phi\left(K_{m,\beta,\zeta}\otimes  h(\zeta)\right)e^{\frac{\beta}{2}\left|\zeta\right|^{2m}} d\mu_{m,\beta}(\zeta)\\
%&=\int_{\C^d}\Phi\left(K_{m,\beta,\zeta}\otimes  h_1(\zeta)\right) d\mu_{m,\beta}(\zeta)\\
%&=\Phi\left(\int_{\C^d}K_{m,\beta,\zeta}\otimes  h_1(\zeta) d\mu_{m,\beta}(\zeta)\right)\\
%\end{align*}
%since **** for each $\zeta$, $K_{m,\beta,\zeta  }\otimes  h_1(\zeta) $ is in $F^{p}_{m,\beta}(\C^d,Y)$ and the integral $\int_{\C^d}K_{m,\beta,\zeta}\otimes  h_1(\zeta) d\mu_{m,\beta}(\zeta)= P_{\beta}h_1$ is also in $F^{p}_{m,\beta}(\C^d,Y)$ 
%because  ****
%$$\left\| K_{m,\beta,z}\otimes  h_1(\zeta) \right\|_{F^{p}_{m,\beta}(\C^d,Y)}\lesssim \left\|h_1(\zeta) \right\|_{Y}\left|\zeta\right| ^{2\frac{d}{p'}(m-1)} e^{\frac{\beta}{2}\left|\zeta\right|^{2m}}$$
%and
%$h_1$ is compactly supported, and $\Phi$ is bounded on $F^{p}_{m,\beta}(\C^d,Y)$ (Dinculeanu p 123). 
%or
%$ \Phi_1(h)$
%Thus $\Phi_1$ defines a bounded linear functional on the space $L^p(\C^d,dv,Y)$.
The boundedness of $P_{\beta}$ on $L^{p}_{m,\beta}(Y)$ yields that $\Phi_1$ is bounded on $L^p(\C^d,dv,Y)$
%$L^{p}_{m,\beta}(Y)$
 with $\left\|\Phi_1\right\|\leq\left\|\Phi\right\|\left\|P_{\beta}\right\|$, and we deduce that $\phi_1\in L^{p'}(\C^d,dv,Y^*)$.
%, and thus $\phi_1$ is in $L^{p'}(\C^d,dv,Y^*)$ by an approximation argument.\\
%Due to the boundedness of $P_{\beta}$ on $L^{p}_{m,\beta}(\C^d,Y)$ we get
%$$ \left|\Phi_1(h)\right|\leq\left\|\Phi\right\| \left\|P_{\beta}\right\|\left\|h_1\right\|_{L^{p}_{m,\beta}(\C^d,Y)}=\left\|\Phi\right\| \left\|P_{\beta}\right\|\left\|h\right\|_{L^{p}(\C^d,Y)};$$
%thus $\Phi_1$ is bounded on $L^{p}_{m,\beta}(\C^d,Y)$ with $\left\|\Phi_1\right\|\leq\left\|\Phi\right\|\left\|P_{\beta}\right\|$. 
%This  implies that $\phi_1$ is in $L^{p'}(\C^d,Y^*)$ by an approximation argument (dinc).\\
Indeed,  for every integer $n$, let $\chi_n$ be  the characteristic function of the closed ball of $\C^d$ of center $0$ and  radius $n$; we observe that
$$  \Phi_1(h\chi_n)=\int_{\C^d}\left\langle  h(\zeta),\left(\phi_1\chi_n\right)\left(\overline \zeta\right) \right\rangle dv(\zeta),\ h\in L^{p}(\C^d,Y).$$
Recall that  the $L^{p'}$-norm (see \cite{Din} Theorem 6,  chap. 2) is given by
 %$ \phi_1\chi_n $ induces a bounded linear functional on $L^{p}(\C^d,Y)$, and
$$ \left\|\phi_1\chi_n\right\|_{L^{p'}(\C^d,Y^*)}=\sup_{\left\|h\right\|_{L^{p}(\C^d,Y)}=1}\left|\int_{\C^d}\left\langle  h(\zeta),\left(\phi_1\chi_n\right)\left(\overline \zeta\right) \right\rangle dv(\zeta)\right| \leq\left\|\Phi_1\right\|.$$
%$$\leq\left\|\Phi\right\|\left\|P_{\beta}\right\|$$
Now Beppo-Levi's Theorem implies that $\phi_1\in L^{p'}(\C^d,dv,Y^*)$, or equivalently $\phi\in L^{p'}_{m,\beta}(Y^*)$.\\
 It remains to prove that , for any $f\in F^{p}_{m,\beta}(Y)$,
\begin{equation*}
%\label{j phi f}
	\Phi(f)=M\phi(f).
\end{equation*}
  Considering  the dilate $f_r$, $0<r<1$, and  $R>0$, we split the integral
\begin{align*}
M\phi(f_r)
%&=\int_{\C^d}\left\langle f_r(\zeta),\phi(\overline\zeta)\right\rangle d\mu_{m,\beta}(\zeta)\\
&=\int_{\C^d}\Phi\left(K_{m,\beta,\zeta}\otimes f_r(\zeta)\right)d\mu_{m,\beta}(\zeta)\\
&=\int_{\left|\zeta\right|\leq R}\Phi\left(K_{m,\beta,\zeta}\otimes f_r(\zeta)\right)d\mu_{m,\beta}(\zeta)+\int_{\left|\zeta\right|> R}\Phi\left(K_{m,\beta,\zeta}\otimes f_r(\zeta)\right)d\mu_{m,\beta}(\zeta).\\
\end{align*}
 Since  the first integral is equal to $\Phi P_{\beta}\left( \chi_R f_r\right)$, and  $\lim_{R\rightarrow \infty}\left\|\left(1-\chi_R\right)f_r\right\|_{F^{p}_{m,\beta}(Y)}=0$,  the boundedness of $\Phi P_{\beta}$ on $L^{p}_{m,\beta}(Y)$ ensures that
$$ \lim_{R\rightarrow \infty} \int_{\left|\zeta\right|\leq R}\Phi\left(K_{m,\beta,\zeta}\otimes f_r(\zeta)\right)d\mu_{m,\beta}(\zeta)=\Phi P_{\beta}\left(  f_r\right)=\Phi \left(  f_r\right).$$
%\begin{align*}
  %\int_{\left|\zeta\right|\leq R}\Phi\left(K_{m,\beta,\zeta}\otimes f_r(\zeta)\right)d\mu_{m,\beta}(\zeta)
	%&=\Phi\left(\int_{\left|\zeta\right|\leq R}K_{m,\beta,\zeta}\otimes f_r(\zeta)d\mu_{m,\beta}(\zeta)\right)\\
	%&=\Phi\left( \int_{\C^d}K_{m,\beta,\zeta}\otimes\chi_R (\zeta) f_r(\zeta)d\mu_{m,\beta}(\zeta)\right)\\
	%&=
	%\end{align*}
	%and since $$\lim_{R\rightarrow \infty}\left\|\left(1-\chi_R\right)f_r\right\|_{F^{p}_{m,\beta}(Y)}=0,$$
	%the boundedness of $\Phi P_{\beta}$ on $L^{p}_{m,\beta}(Y)$ implies that
	%$$ \lim_{R\rightarrow \infty} \int_{\left|\zeta\right|\leq R}\Phi\left(K_{m,\beta,\zeta}\otimes f_r(\zeta)\right)d\mu_{m,\beta}(\zeta)=\Phi P_{\beta}\left(  f_r\right)=\Phi \left(  f_r\right).$$
On another hand, the estimates established in (\ref{estimate kern in F}) and  Proposition \ref{pointwise Fp} give
\begin{align*}
\left|\int_{\left|\zeta\right|> R}\Phi\left(K_{m,\beta,\zeta}\otimes f_r(\zeta)\right)d\mu_{m,\beta}(\zeta)\right|
%&\leq 
%\int_{\left|\zeta\right|> R}\left\|\Phi\right\|\left\|K_{m,\beta,\zeta}\otimes f_r(\zeta)\right\|_{F^{p}_{m,\beta}(\C^d,Y)}d\mu_{m,\beta}(\zeta)\\
&\leq\int_{\left|\zeta\right|> R}\left\|\Phi\right\|\left\|K_{m,\beta,\zeta}\right\|_{F^{p}_{m,\beta}}\left\|f_r(\zeta)\right\|_Y d\mu_{m,\beta}(\zeta)\\
%&\lesssim \left\|\Phi\right\|  \left\|f\right\|_{F^{p}_{m,\beta}(\C^d,Y)}\int_{\left|\zeta\right|> R}\left|\zeta\right|^{2\frac{d}{p'}(m-1)}e^{\frac{\beta}{2}\left|\zeta\right|^{2m}}\left|\zeta\right|^{2\frac{d}{p}(m-1)}e^{\frac{\beta}{2}r^{2m}\left|\zeta\right|^{2m}}d\mu_{m,\beta}(\zeta)\\
&\lesssim \left\|\Phi\right\| \left\|f\right\|_{F^{p}_{m,\beta}(Y)} \int_{\left|\zeta\right|> R}\left|\zeta\right|^{2d(m-1)}e^{-\frac{\beta}{2}(1-r^{2m})\left|\zeta\right|^{2m}}dv(\zeta),\\
\end{align*}
which tends to $0$ as $R\rightarrow \infty.$ We have proved that 
$ \Phi(f_r)=M\phi(f_r) $, for $0<r<1$. Taking limit as $r\rightarrow 1$, we obtain that   $M$ is surjective. 

The cases $F^{1}_{m,\beta}(Y)$ and $F^{\infty,0}_{m,\beta}(Y)$ are handled similarly.

%\par

%it is sufficient to prove that for any $f\in \mathcal{L}^{p}_{m,\beta}(\C^d,Y)$, 
%\begin{equation}\label{phi in Lp'}
%\left|\int_{\C^d}\left\langle f(z), \phi(\overline{z}) \right\rangle_{Y, Y^*}d\mu_{m,\beta}(z)\right|\leq C \left\|f\right\|_{\mathcal{L}^{p}_{m,\beta}(\C^d,Y)}
%\end{equation}
%for some constant $C$. First we observe that by Lemma \ref{int blp beta} we get
%\begin{align*}
%\left|\left\langle f(z), \phi(\overline{z}) \right\rangle_{Y, Y^*}\right|&\leq\left\|\Phi\right\|\left\|f(z)K_{m,\beta,z}\right\|_{F^{p}_{m,\beta}(\C^d,Y)}\\
%&=\left\|\Phi\right\|\left\|f(z)\right\|_Y\left\|K_{m,\beta,z}\right\|_{F^{p}_{m,\beta}(\C^d,\C)}\\
%&\lesssim \left\|\Phi\right\|\left\|f(z)\right\|_Y\left|z\right|^{\frac{2}{p'}d(m-1)}e^{\frac{\beta}{2}\left|z\right|^{2m}},\\
%\end{align*}
%which ensures the absolute convergence of the integral in \ref{phi in Lp'}. Now, using Holder inequality, we get
%\begin{align*}
%\int_{\C^d}\left|\left\langle f(z), \phi(\overline{z}) \right\rangle_{Y, Y^*}\right|d\mu_{m,\beta}(z)&\lesssim %\left\|\Phi\right\|\int_{\C^d}\left\|f(z)\right\|_Y\left|z\right|^{\frac{2}{p'}d(m-1)}e^{\frac{\beta}{2}\left|z\right|^{2m}}d\mu_{m,\beta}(z)\\
%&\lesssim \left\|\Phi\right\|\left\|f\right\|_{\mathcal{L}^{p}_{m,\beta}(\C^d,Y)},
%\end{align*}
%which proves that
%$$  \left\|\phi\right\|_{F^{p'}_{m,\beta}(\C^d,Y^*)}\lesssim \left\|\Phi\right\|.$$

\end{proof}

%When the target space is a Hilbert space $\HH$,
 Proposition \ref{density kernel} proves another density result for $F^{p}_{m,\beta}(Y )$, which will be  used in section \ref{compact}.

\begin{prop}\label{density kernel}
 The set $V$ of functions of the form 
$$ h(z)=\sum^{n}_{l=1}K_{m,\beta}(z,a_l)c_l, \qquad a_l\in\C^d,\ c_l\in Y,\ n\text{ integer},$$
is dense in $F$, if $F\in \left\{F^{p}_{m,\beta}(Y),F^{\infty,0}_{m,\beta}(Y)\right\} $, $1\leq p$.

\end{prop}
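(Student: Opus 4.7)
The plan is to argue by contradiction using the Hahn--Banach theorem and the duality results of Proposition \ref{dual Fp 1 leq p}. Suppose $V$ is not dense in $F$. Then there exists a nonzero bounded linear functional $\Phi$ on $F$ that vanishes on $V$, and by Proposition \ref{dual Fp 1 leq p} it admits a representation $\Phi = Mf$ with $f$ in the appropriate dual Fock space (namely $F^{p'}_{m,\beta}(Y^*)$ when $F = F^{p}_{m,\beta}(Y)$ with $1 \leq p < \infty$, and $F^{1}_{m,\beta}(Y^*)$ when $F = F^{\infty,0}_{m,\beta}(Y)$), where $M$ is the operator from (\ref{def M}).

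The heart of the argument is to evaluate $Mf$ on a generator $K_{m,\beta}(\cdot,a)\otimes c \in V$ and recognize the outcome as a point evaluation of $f$. First I would rewrite
\[
Mf\bigl(K_{m,\beta}(\cdot,a)\otimes c\bigr) = \int_{\C^d} K_{m,\beta}(\zeta,a)\,\langle c, f(\bar\zeta)\rangle_{Y-Y^*}\, d\mu_{m,\beta}(\zeta),
\]
using the bilinearity of the $Y$--$Y^*$ pairing to pull out the scalar. Next, the change of variable $\zeta \mapsto \bar\zeta$, which preserves $d\mu_{m,\beta}$ because the weight depends only on $|\zeta|$, together with the explicit formula (\ref{kernel Fm}) and the fact that the Mittag--Leffler series has real coefficients, yields the symmetry $K_{m,\beta}(\bar\zeta,a) = K_{m,\beta}(\bar a,\zeta)$. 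Pulling the functional $\langle c,\cdot\rangle$ back under the integral (permissible since it is a bounded linear map on $Y^*$) and invoking the reproducing formula of Proposition \ref{reproducing ppty} applied to the $Y^*$-valued function $f$ at the point $\bar a$ then collapses the integral to
\[
Mf\bigl(K_{m,\beta}(\cdot,a)\otimes c\bigr) = \langle c, f(\bar a)\rangle_{Y-Y^*}.
\]

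With this identity in hand the conclusion is immediate: the assumption $\Phi|_V = 0$ forces $\langle c, f(\bar a)\rangle = 0$ for every $a \in \C^d$ and every $c \in Y$, hence $f \equiv 0$, hence $\Phi = 0$, contradicting the choice of $\Phi$. The main technical obstacle is the bookkeeping with complex conjugation: one must be careful with the non-holomorphic substitution $\zeta \mapsto \bar\zeta$, with the convention that $\langle z,w\rangle$ is conjugate-linear in $w$, and with the fact that the reproducing formula has to be applied to a $Y^*$-valued function (which is precisely why Proposition \ref{reproducing ppty} was established for an arbitrary Banach space and for all $1 \leq p \leq \infty$).
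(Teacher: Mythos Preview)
Your proposal is correct and follows essentially the same approach as the paper: Hahn--Banach combined with the duality of Proposition~\ref{dual Fp 1 leq p} and the reproducing formula of Proposition~\ref{reproducing ppty}. The only cosmetic difference is that the paper tests against $K_{m,\beta,\overline a}\otimes c$ directly and reads off $\langle c,h(a)\rangle=0$, whereas you test against $K_{m,\beta}(\cdot,a)\otimes c$ and perform the conjugation $\zeta\mapsto\bar\zeta$ explicitly to arrive at $\langle c,f(\bar a)\rangle=0$; since $a$ ranges over all of $\C^d$, the two computations are equivalent.
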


\begin{proof}
We  apply the Hahn-Banach Theorem to the space
$$ V^{\perp}=\left\{ h\in F^{  *},\ Mh(g)=0, \text{ for every }g\in V\right\},$$
 where $M $ is defined in  (\ref{def M}), namely
$$Mh(g):=\int_{\C^d}\left\langle  g(\zeta),f(\overline{\zeta})\right\rangle d\mu_{m,\beta}(\zeta).  $$
 If $h\in V^{\perp}$, we have $Mh( K_{m,\beta,\overline a}\otimes c)=\left\langle c,h(a)\right\rangle=0$ for every $a\in \C^d,\ c\in Y$. Hence $ V^{\perp}$  reduces to $\left\{0\right\}$, which completes the proof.
\end{proof}
\section{Boundedness of Hankel operators}\label{boundedness}

In this section, we characterize the boundedness of the Hankel operator $h_b$ and study the dependence of $\left\|h_b\right\|$ on the parameter $m\geq 1$. \\
When $b$ is in $L^{\infty}_{m,\alpha}( \mathcal{B}(\HH))$, a straightforward computation shows that $h_b$ and $h_{P_{\alpha}b}$ coincide on the set of holomorphic polynomials, which is dense in $F^{p}_{m,\alpha}( \HH)   $, $1\leq p<\infty$. 
 Thus, we may assume that  $b$ is in $F^{\infty}_{m,\alpha}( \mathcal{B}(\HH))$, and in such case $h_b$ is densely defined. \\
% Recall that from (\ref{sigma}),
%$$ \sigma_p=\max\left(1,\frac{d+1}{p}\right). $$

\begin{lemma}\label{hb bd implies Rdb p}
%Let $\alpha_0, $ be a positive numbers and $\alpha \geq \alpha_0>0$.
Suppose that $b$ is in $F^{\infty}_{m,\alpha}( \mathcal{B}(X))$ and  that $h_b$ extends to a bounded linear operator on $F^{p}_{m,\alpha}( X)   $, $1\leq p<\infty$. Then 
\begin{equation*}
%\label{cond deri radial}
	\left\|\RR b\left(z\right)\right\|_{\mathcal{B}(X)}\leq C\left\|h_b\right\|\left( m^{d+1} \left|z\right|^{2m}+\left|z\right|\right)e^{\frac{\alpha}{4}\left|z\right|^{2m}},\ z\in\C^d,
\end{equation*}
for some constant $C$ which is independent of $m$.
\end{lemma}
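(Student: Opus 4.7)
The plan is to test the Hankel form $\langle h_b x,y\rangle_\alpha$ against functions built from the ``half-parameter'' reproducing kernel $K_{m,\alpha/2}(\cdot,\bar z)$, arranged so that the product $\|x\|_{F^p_{m,\alpha}}\cdot\|y\|_{F^{p'}_{m,\alpha}}$ of the $F^p$ and $F^{p'}$ norms carries only the exponential factor $e^{\alpha|z|^{2m}/8}\cdot e^{\alpha|z|^{2m}/8}=e^{\alpha|z|^{2m}/4}$, rather than the $e^{\alpha|z|^{2m}/2}$ that the naive choice $K_{m,\alpha}(\cdot,\bar z)$ would yield. This sharper weight is exactly what the statement requires.

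Starting from the reproducing identity $b=P_\alpha b$ of Proposition \ref{reproducing ppty} and differentiating under the integral in $z$, I would first write, for $c\in X$ and $e\in X^*$,
$$\langle \mathcal{R}b(z)c,e\rangle = \int_{\mathbb{C}^d}\langle b(\zeta)c,e\rangle\,\mathcal{R}_z K_{m,\alpha}(z,\zeta)\,d\mu_{m,\alpha}(\zeta).$$
Using (\ref{kernel Fm}) and the asymptotic (\ref{asymp deri ML m > 1/2}) of $E^{(d)}_{1/m,1/m}$, the anti-holomorphic (in $\zeta$) factor $\mathcal{R}_z K_{m,\alpha}(z,\zeta)$ splits (up to lower-order remainders) as a product of two anti-holomorphic pieces each carrying roughly half of the exponential growth: schematically $\mathcal{R}_z K_{m,\alpha}(z,\zeta)\sim K_{m,\alpha/2}(z,\zeta)\,\overline{\psi_z(\zeta)}$ with $\psi_z$ holomorphic. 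For $m=1$ this splitting is exact (since $e^{\alpha\langle z,\zeta\rangle}=e^{\alpha\langle z,\zeta\rangle/2}\cdot e^{\alpha\langle z,\zeta\rangle/2}$); for $m>1$ it is controlled by the Mittag-Leffler asymptotics with errors of strictly smaller order.

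The splitting dictates the test functions $x(w)=K_{m,\alpha/2}(w,\bar z)\,c$ and $y(w)=\psi_z(w)\,e$. Thanks to the identity $K_{m,\alpha/2}(\bar\zeta,\bar z)=K_{m,\alpha/2}(z,\zeta)$, one has $x(\bar\zeta)=K_{m,\alpha/2}(z,\zeta)c$, and $\langle h_b x,y\rangle_\alpha$ reproduces $\langle \mathcal{R}b(z)c,e\rangle$ up to a controlled error. The norms of the test functions are then evaluated via Lemma \ref{int blp beta} with inside-$E$ parameter $\alpha/2$ and outer weight $\beta=\alpha p/2$: one obtains
$$\|x\|_{F^p_{m,\alpha}(X)}\lesssim \Theta_1(m,|z|)\,e^{\alpha|z|^{2m}/8}\,\|c\|, \qquad \|y\|_{F^{p'}_{m,\alpha}(X^*)}\lesssim m^{d+1}\,\Theta_2(m,|z|)\,|z|^{2m}\,e^{\alpha|z|^{2m}/8}\,\|e\|,$$
with polynomial factors $\Theta_i$; the $m^{d+1}$ is traceable to the bound $|p_{d+1}(x)|\asymp m^{d+1}|x|^{d+1}$ from the text, and the $|z|^{2m}$ to the $\langle z,\zeta\rangle^m$ prefactor in $\psi_z$.

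Combining via Proposition \ref{dual Fp 1 leq p} and the boundedness hypothesis yields
$$|\langle \mathcal{R}b(z)c,e\rangle|\,\le\, \|h_b\|\,\|x\|_{F^p_{m,\alpha}}\,\|y\|_{F^{p'}_{m,\alpha}}\,\lesssim\, \|h_b\|\,m^{d+1}|z|^{2m}\,e^{\alpha|z|^{2m}/4}\,\|c\|\|e\|,$$
which, after taking the supremum over $\|c\|,\|e\|\le 1$, gives the large-$|z|$ part of the claim. The complementary $|z|$ term, which dominates near the origin where $\mathcal{R}b(0)=0$, is obtained separately using $h_b c=bc$ for a constant $c$ (so that $\mathcal{R}(h_b c)=\mathcal{R}b\cdot c$), the pointwise estimate of Proposition \ref{pointwise Fp}, and Cauchy's formula on a fixed ball about $z$. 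The main obstacle is Step 2: controlling the error in the splitting of $\mathcal{R}_z K_{m,\alpha}$ for $m>1$ and keeping the $m$-dependence uniform as claimed; this relies on the uniform polynomial bounds on $p_k$ established in the text and on the sharp asymptotic estimates of Lemmas \ref{int sphere} and \ref{int blp beta}.
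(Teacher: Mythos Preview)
Your overall strategy---test $\langle h_b x,y\rangle_\alpha$ against holomorphic functions engineered so that the kernel factorizes into two pieces each carrying half the exponential growth---is exactly the paper's idea. The paper's implementation differs in two ways worth noting. First, rather than building $x$ from $K_{m,\alpha/2}$, the paper evaluates $\mathcal{R}b$ at the dilated point $2^{1/m}z$, so that the integrand involves $F\big((2\alpha)^{1/m}\langle z,\zeta\rangle\big)$; the splitting then uses the exact polynomial identity $p_{1+d}(2T)=p_1(T)\sum_{l=0}^d a_l p_l(T)$, which yields $x_z(\zeta)=2^{-d/m}C_mE_{1/m,1/m}(\alpha^{1/m}\langle\zeta,\bar z\rangle)x_0$ (built from $E$, not $E^{(d-1)}$) and $y_z=\sum_l a_l G_l(\alpha^{1/m}\langle\cdot,z\rangle)y_0$. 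Both test functions sit at parameter $\alpha$ in $F^p_{m,\alpha}$ and $F^{p'}_{m,\alpha}$, giving $\|x_z\|\|y_z\|\asymp |z|^{2m}e^{\alpha|z|^{2m}}$; after undoing the dilation $w=2^{1/m}z$ this becomes $|w|^{2m}e^{\alpha|w|^{2m}/4}$. Your $K_{m,\alpha/2}$ choice would need its own algebraic decomposition (since $p_{d+1}/p_d$ is not a polynomial, a single $\psi_z$ will not suffice---you would end up with a sum as in the paper), but it should work once that is carried out.

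There is, however, a genuine gap in your near-origin step. Using the pointwise estimate of Proposition~\ref{pointwise Fp} on $h_bc=bc\in F^p_{m,\alpha}(X)$ and then Cauchy on a fixed ball brings in the factor $(1+|w|)^{\tau_p}$ with $\tau_p=\tfrac{2}{p}d(m-1)$; on any ball of fixed positive radius this grows like $C^{m}$ and destroys the $m$-uniformity that the lemma asserts. The paper avoids this by staying with the integral representation $\mathcal{R}b(z)x_0=C_mc_{m,\alpha}\int (h_bx_0)(\zeta)F(\alpha^{1/m}\langle z,\zeta\rangle)e^{-\alpha|\zeta|^{2m}}dv(\zeta)$, applying H\"older, and estimating $\int |F(\alpha^{1/m}\langle z,\zeta\rangle)|^{p'}e^{-p'\alpha|\zeta|^{2m}/2}dv(\zeta)$ directly via (\ref{asymp deri ML m > 1/2}) and a Laplace-type computation; this produces $\|\mathcal{R}b(z)x_0\|\lesssim \|h_b\|\,\|x_0\|\,|z|$ for $|z|\le R_0$, with a constant independent of $m$.
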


\begin{proof}
Let  $0<R_0<1$ and $x_0\in X$. 
%The letter $C$ will denote a constant that may change from line to line,  but is independent of $b,m$ and $z$.
   We apply $h_b$ to the constant function $x_0$.   The reproducing property implies  that $h_b x_0=b\otimes x_0$, and 
$$(h_b x_0)(z)=b(z)x_0=\int_{\C^d}(h_b x_0)(\zeta) K_{m,\alpha}\left(z,\zeta\right)d\mu_{m,\alpha}(\zeta),\ z\in\C^d.$$
With the notation
\begin{equation*}%\label{def F}
	F(t):=tE^{(d)}_{\frac{1}{m},\frac{1}{m}}(t),\ t\in\C,
\end{equation*}
differentiation under the integral shows that 
\begin{equation}\label{calcul Rb}
	 \RR b(z)x_0= C_m c_{m,\alpha}\int_{\C^d}(h_bx_0)(\zeta)F\left(\left\langle \alpha^{1/m}z,\zeta\right\rangle\right) e^{-{\alpha}\left|\zeta\right|^{2m}}dv(\zeta).
\end{equation}
We shall estimate $\left\|\RR b(z)x_0\right\|_X$ for $\left|z\right|\leq R_0$ and $\left|z\right|\geq R_0$ separately.\\

	\textbf{step 1: $\left|z\right|\leq R_0$}.\\ 
  Definitions (\ref{dmu cm alpha}) imply $C_mc_{m,\alpha}\asymp m$. We assume that $1< p$, the case $p=1$ being similar. It follows from H\"older's inequality and the boundedness of $h_b$ that
$$ \left\|\RR b(z)x_0\right\|_X\lesssim m\left\|h_b\right\|  \left\|x_0\right\|_X I_m(z),$$
where
$$I_m(z)^{p'}:=\int_{\C^d}\left| F \left(\left\langle \alpha^{1/m}z,\zeta\right\rangle\right)\right|^{p'}e^{-p'\frac{\alpha}{2}\left|\zeta\right|^{2m}}dv(\zeta).$$
Since the power series of $F$ has positive coefficients,  the estimate (\ref{asymp deri ML m > 1/2}) implies that
$$  \left| F \left(\left\langle \alpha^{1/m}z,\zeta\right\rangle\right)\right|\lesssim \left|z\right|\left(1+ m^{d+1} \left|\zeta\right|\left(R_0 \left|\zeta\right| \right)^{(m-1)(d+1)}\right)e^{\alpha R_0^m\left|\zeta\right|^m}.$$
 Therefore
 \begin{align*}
 I_m(z)^{p'}
%&\lesssim\left|z\right|^{p'}\left[\int_{\left|\zeta\right|\leq 1}+\int_{\left|\zeta\right|> 1}\right]\left|\left|\zeta\right| E^{(d)}_{\frac{1}{m},\frac{1}{m}} \left( \alpha^{1/m}R_0\left|\zeta\right|\right)\right|^{p'}e^{-p'\frac{\alpha}{2}\left|\zeta\right|^{2m}}dv(\zeta)\\ 
%&\leq \left|z\right|^{p'}\int_{\C^d}\left|\left|\zeta\right|E^{(d)}_{\frac{1}{m},\frac{1}{m}} \left( \alpha^{1/m} R_0\left|\zeta\right|\right)\right|^{p'}e^{-p'\frac{\alpha}{2}\left|\zeta\right|^{2m}}dv(\zeta)\\
%&\lesssim c_{m,\alpha}\left|z\right|^{p'}\alpha^{p'/m}\int_{\C^d} \left|\left|\zeta\right|m^{d+1}\left( \alpha^{1/m} R_0\left|\zeta\right|\right)^{(d+1)(m-1)} e^{\alpha R_0^m\left|\zeta\right|^m} \right|^{p'}e^{-p'\frac{\alpha}{2}\left|\zeta\right|^{2m}}dv(\zeta)\\
&\lesssim \left|z\right|^{p'}\left(1+m^{p'(d+1)}R_0^{p'(d+1)(m-1)}J_m(z)\right),
%&\lesssim \left|z\right|^{p'}\left(1+m^{p'(d+1)}R_0^{p'(d+1)(m-1)}\int_{\C^d} \left|\zeta\right|^{p'\left(1+(d+1)(m-1)\right)}
 %e^{p'\alpha R_0^m\left|\zeta\right|^m}e^{-p'\frac{\alpha}{2}\left|\zeta\right|^{2m}}dv(\zeta)\right),
\end{align*} 
and the integral
$$ J_m(z):=\int_{\C^d} \left|\zeta\right|^{p'\left(1+(d+1)(m-1)\right)}
 e^{p'\alpha R_0^m\left|\zeta\right|^m}e^{-p'\frac{\alpha}{2}\left|\zeta\right|^{2m}}dv(\zeta) $$ 
is computed  via spherical coordinates and  a change of variable.  Thus,  for some constant $C$,
$$  J_m(z)=C{m}^{-1}e^{p'\frac{\alpha}{2}R^{2m}_{0}}\int^{+\infty}_{0}t^{\gamma_m} e^{-p'\frac{\alpha}{2}\left(t-R^{m}_{0}\right)^2} dt,$$
 the exponent $ \gamma_m:= 2\frac{d}{m}-1+\frac{p'}{m}\left(1+(d+1)(m-1)\right) $ being bounded above and below, independently of $m$. Hence,
 $mJ_m(z)$ is bounded, 
$I_m(z)^{p'}\lesssim\left|z\right|^{p'}$, and  
\begin{equation*}%\label{z leq R0}
	\left\|\RR b(z)x_0\right\|_X
%&\leq C_m  \left\|h_bx_0\right\|_{F^{p}_{m,\alpha}(X )}I_m(z)\\
\lesssim \left\|h_b\right\|  \left\|x_0\right\|_X\left|z\right|,\ \left|z\right|\leq R_0. 
\end{equation*}

\textbf{step 2: $\left|z\right|\geq R_0$}.\\
%Next we shall estimate $\left\|\RR b(z)\right\|_{\be(X)}$ for $\left|z\right|\geq R_0$.
 Let $y_0\in X^{*}$. From (\ref{calcul Rb}), we get
\begin{align*}%\label{diff1 rep form}
\left\langle \left(\RR b\right)(2^{1/m}z)x_0,y_0\right\rangle_{X-X^{*}}&=C_m   c_{m,\alpha}\int_{\C^d}\left\langle b(\zeta)x_0,y_0\right\rangle_{X-X^{*}}  F\left(\left\langle \left(2\alpha\right)^{1/m}z,\zeta\right\rangle\right)d\mu_{m,\alpha}(\zeta)\\
&=\int_{S_R} \cdots+ \int_{\C^d\setminus S_R}\cdots=I(z)+J(z),\nonumber
\end{align*}
%\begin{equation}\label{Jk tilde split}
%{J}(z)=\int_{S_m} \cdots+ \int_{\C^d\setminus S_m}\cdots= I+II.
%\end{equation}
where $S_R=S_R(z)$ was defined in (\ref{def SR}).\\

  In order to study $I(z)$, the integral over $S_R$,  we set
	$$t= \left\langle z,\zeta\right\rangle,\text{ and }t_{\beta}=\beta^{1/m}t, \  \beta>0, \text{  with }\arg t\in I_m\text{ and }|t|\geq R,$$
	and  establish asymptotics of the integrand when $|t|\rightarrow \infty$. \\
	Recall that (\ref{asymp deri ML m > 1/2}) implies that, for  $t\in\C\setminus \left\{0\right\}$, and $\left|\arg t\right|\leq\frac{\pi}{2m}$,
\begin{equation}\label{asym Ek-1}
	E^{(k-1)}_{\frac{1}{m},\frac{1}{m}}(t)=
	 \frac{p_k(t^m)}{t^k} e^{t^m}+O(1)m^{-1}\left|t\right|^{-k},\ \text{ for any integer }k\geq 1, 
\end{equation}
	where the polynomial $p_k$ has degree $k$ and that 
	\begin{equation}\label{asym E -1}
	E^{(-1)}_{\frac{1}{m},\frac{1}{m}}(t)=
	  e^{t^m}+O(1)m^{-1}\left|t\right|.
\end{equation}
   On another hand,  there exist real numbers $a_l$ such that
\begin{equation}\label{eucli divi p 1+d}
p_{1+d}(2T)=p_{1}(T)\sum^{d}_{l=0}a_{l}p_l(T).
\end{equation}
%because $p_{1+d}(0)=0$
A careful inspection of the coefficients show that  $ a_{d}=2^{1+d}$, and
\begin{gather*}
\left|a_{l}\right|\lesssim (m+1)^{d-l},\ 0\leq l\leq d.
\end{gather*}
%with a constant  which is independent of $m$.\\ 
%(it depends on $d$ only).\\
For an entire   function $f(t), \ t\in \C$,	and   an  integer $k\geq 0$, let  $T_kf$ denote the Taylor polynomial of $f$ of degree $k$ at $0$, and set $T_{-1}f=T_{-2}f=0$. For $0\leq l\leq d$, we  define the  entire function 
\begin{equation*}%\label{def Gl}
	G_l(t):=\frac{1}{t^{d-l-1}}\left[E^{(l-1)}(t)-T_{d-l-2}E^{(l-1)}(t)\right],\  t\in \C.
\end{equation*}
%With this notation, 
Using (\ref{asym Ek-1}), we derive   asymptotics of $F\left(t_{2\alpha}\right )$ in terms of $p_{d+1}$. Taking into account (\ref{eucli divi p 1+d}) and   asymptotics for $E_{\frac{1}{m},\frac{1}{m}}$ and $G_l$ given by (\ref{asym Ek-1}) and  (\ref{asym E -1}), we have 
\begin{equation}\label{F t 2 alpha}
F\left(t_{2\alpha}\right )=2^{-d/m}\left[\sum^{d}_{l=0}a_{l}A_l(t_{\alpha})\right]+O\left(1\right)t^{-d},
\end{equation}
where
\begin{align}\label{def Al}
A_l(t)&:=\frac{p_1\left(\alpha t^m\right)}{t_{\alpha}}   e^{\alpha t^m}\frac{1}{\left(t_{\alpha}\right)^{d-l-1}}\frac{p_l\left(\alpha t^m\right)}{\left(t_{\alpha}\right)^{l}}e^{\alpha t^m}\\
%&=\left[E_{\frac{1}{m},\frac{1}{m}}\left(t_{\alpha}\right)+O\left(1\right)m^{-1}t_{\alpha}^{-1}\right]\left[G_l(t_{\alpha})+ O\left(1\right)m^{-1}\right]\\\nonumber
%&=E_{\frac{1}{m},\frac{1}{m}}\left(t_{\alpha}\right)G_l(t_{\alpha})+O\left(1\right)m^{-1}\left[ t_{\alpha}^{-1}G_l(t_{\alpha})+E_{\frac{1}{m},\frac{1}{m}}\left(t_{\alpha}\right)+m^{-1}\right]\\
&=E_{\frac{1}{m},\frac{1}{m}}\left(t_{\alpha}\right)G_l(t_{\alpha})+O\left(1\right)m^{-1}\left[t_{\alpha}^{2+l-d}E_{\frac{1}{m},\frac{1}{m}}^{(l-1)}(t_{\alpha})+tE_{\frac{1}{m},\frac{1}{m}}\left(t_{\alpha}\right)+t^2\right]\nonumber.
%&=\left[E^{(1-1)}\left(t_{\alpha}\right)+O(1)t_{\alpha}^{-1}\right]\left[\frac{1}{\left(t_{\alpha}\right)^{d-l-1}}E^{(l-1)}\left(t_{\alpha}\right)+O(1 )t_{\alpha}^{1-d}\right],
\end{align}

 % has the same  asymptotics as $\frac{E^{(l-1)}(t)}{t^{d-l-1}}$, namely,  as $\left|t\right|\to \infty,\ \arg t\in I_m$,
%\begin{align}\label{asymp Gl}
%	G_l(t)&=\frac{E^{(l-1)}(t)}{t^{d-l-1}}+O\left(t\right),
%	\end{align}
Combining  (\ref{def Al}) and (\ref{F t 2 alpha}), one gets
$$ I(z)=C_m 2^{-d/m}\sum^{d}_{l=0}a_{l} \sum^{4}_{i=1}B_{l,i}, $$
where
%and, for $0\leq l\leq d$, there exist  bounded functions  $\epsilon_{l,i}(t)$, $i=2,3$ such that
%\begin{multline*}
%B_l=\int_{S_m}\left\langle b(\zeta)x_0,y_0\right\rangle_{X-X^{*}}\left[E\left(\alpha^{1/m}\left\langle z,\zeta\right\rangle\right)G_l(\alpha^{1/m}\left\langle z,\zeta\right\rangle)\right.\\
%\left. +O(1)\left(\left\langle z,\zeta\right\rangle^{l-d}E^{(l-1)}\left(\alpha^{1/m}\left\langle z,\zeta\right\rangle\right)+\left\langle z,\zeta\right\rangle E\left(\alpha^{1/m}\left\langle z,\zeta\right\rangle\right)\right)\right]\mm,
%\end{multline*}
 %Therefore 
%\begin{multline*}
\begin{align*}
B_{l,1}&=\int_{S_R}\left\langle b(\zeta)x_0,y_0\right\rangle_{X-X^{*}}   E\left(\alpha^{1/m} \left\langle z,\zeta\right\rangle \right)G_l\left(\alpha^{1/m}\left\langle z,\zeta\right\rangle\right)\mm,\\
B_{l,2}&=\int_{S_R}\epsilon_{l,2}\left(\alpha^{1/m}\left\langle z,\zeta\right\rangle\right)\left\langle (h_bx_0)(\zeta),y_0\right\rangle_{X-X^{*}} \left\langle z,\zeta\right\rangle^{2+l-d}  E^{(l-1)}\left(\alpha^{1/m}\left\langle z,\zeta\right\rangle\right)\mm,\\
B_{l,3}&=\int_{S_R}\epsilon_{l,3}\left(\alpha^{1/m}\left\langle z,\zeta\right\rangle\right)\left\langle (h_bx_0)(\zeta),y_0\right\rangle_{X-X^{*}} \left\langle z,\zeta\right\rangle  E\left(\alpha^{1/m}\left\langle z,\zeta\right\rangle\right)\mm,\\
B_{l,4}&=\int_{S_R}\epsilon_{l,4} \left(\alpha^{1/m}\left\langle z,\zeta\right\rangle\right)  \left\langle z,\zeta\right\rangle^2 \left\langle b(\zeta)x_0,y_0\right\rangle_{X-X^{*}} \mm,
\end{align*}
and   $\left|\epsilon_{l,i}(t)\right|\lesssim m^{-1}$, $i=2,3,4$. 
In order to estimate $\sum^{d}_{l=0}a_l B_{l,1}$, let us introduce the following test functions
\begin{gather}\label{test fct}
x_z(\zeta)=2^{-\frac{d}{m}}C_m E\left(\alpha^{1/m}\left\langle \zeta,\overline{z}\right\rangle\right)x_0,\\
y_{l,z}(\zeta)=G_l\left(\alpha^{1/m}\left\langle \zeta,z\right\rangle\right)y_0,\  0\leq l\leq d,\text{ and }
y_z= \sum^{d}_{l=0}a_{l} y_{l,z},
\end{gather}
for $\zeta,z\in\C^d$.
%and  we obtain 
%\begin{multline*}
%I=\left\langle h_b x_z,y_{z}\right\rangle_{\alpha} +O(1)   c_{m,\alpha}\left\|h_b \right\|_{F^{p}_{m,\alpha}(X)}\left\|x_0\right\|_X\left\|y_0\right\|_{X^*}\left|z\right|^{\frac{2}{p}d(m-1)}e^{\frac{\alpha}{2}\left|z\right|^{2m}}.
%\end{multline*}
We observe that
$$  \sum^{d}_{l=0}a_l B_{l,1}= \left\langle h_b x_z,y_{z}\right\rangle_{\alpha}-\int_{\C^d\setminus S_R}\left\langle b(\zeta)x_z(\overline \zeta),y_z(\zeta)\right\rangle\mm . $$
%$$ C_m I= \left\langle h_b x_z,y_{z}\right\rangle_{\alpha} +\sum^{d}_{l=0}a_l\left(B_{l,2}+B_{l,3}\right).$$
 First suppose $1<p<\infty$.  Lemma \ref{int blp beta}  
%and  \ref{estimate E in L infty}
%(with $b=0, l=d, p=1, \alpha=\beta/2$),
 yields
%\begin{gather*}
\begin{align}\label{esti xz}
\left\|x_z\right\|^p_{F^{p}_{m\alpha}(X)}&\asymp  \left\|x_0\right\|_X^p m^p m^{p-d-1}\left|z\right|^{2(p-d)(m-1)}e^{p\frac{\alpha}{2}\left|z\right|^{2m}},\\
\left\|a_l y_{l,z}\right\|^{p'}_{F^{p'}_{m\alpha}(X)}&\lesssim   \left\|y_0\right\|_{X^*}^{p'}m^{p'l-l-1}\left|z\right|^{2\left(p'l-d\right)\left(m-1\right)+2p'\left(l+1-d\right)}e^{p'\frac{\alpha}{2}\left|z\right|^{2m}}\ \text{ for }0\leq l\leq d.\nonumber
\end{align}
Hence
%\begin{align*}
%\left|a_{l}\right|   \left\|y_{l,z}\right\|_{F^{p'}_{m\alpha}(\HH)}&\asymp  \left\|y_0\right\|m^{d-\frac{d+1}{p'}}\left|z\right|^{2\left(l-\frac{d}{p'}\right)\left(m-1\right)+2\left(l+1-d\right)}e^{\frac{\alpha}{2}\left|z\right|^{2m}}\\
%&\lesssim\left|a_{d}\right|   \left\|y_{d,z}\right\|_{F^{p'}_{m\alpha}(\HH)},
%\end{align*}
\begin{equation}\label{esti yz}
	\left\|y_{z}\right\|_{F^{p'}_{m\alpha}(X)}\lesssim \left|a_{d}\right|   \left\|y_{d,z}\right\|_{F^{p'}_{m\alpha}(X)}\lesssim  \left\|y_0\right\|_{X^*}m^{d-\frac{d+1}{p'}}\left|z\right|^{2\frac{d}{p}\left(m-1\right)+2}e^{\frac{\alpha}{2}\left|z\right|^{2m}},
\end{equation}
and
one obtains that
%\ref{int blp beta}, we obtain
\begin{align*}
 \left|\left\langle h_b x_z,y_{z}\right\rangle_{\alpha}\right|
%&\leq\left\|h_b\right\| \left\|x_z\right\|_{F^{p}_{m\alpha}(\HH)}\left\|y_{z}\right\|_{F^{p'}_{m\alpha}(\HH)}+O(1)\left\|x_0\right\| \left\|y_0\right\|\left\|b\right\|_{F^{\infty}_{m\alpha}( \mathcal{B}(X) )}c_{m,\alpha}e^{\frac{\alpha}{2}\left|z\right|^{2m}}\\
&\lesssim m\left\|x_0\right\|_X\left\|y_0\right\|_{X^*}\left\|h_b\right\|\left|z\right|^{2m}e^{\alpha\left|z\right|^{2m}}.
\end{align*}
%We observe that by Lemma \ref{estimate E in L infty},
 %the estimates 
%for the norms of the test functions $x_z, y_z$ obtained when $1<p<\infty$ 
%still hold when $p\in\left\{1,\infty\right\}$, with the convention $\frac{1}{\infty}=0$.\\
Using (\ref{asym Ek-1}),  H\"{o}lder's inequality, the pointwise estimates from Proposition \ref{pointwise Fp}, the boundedness of $h_b$ and Lemma
\ref{int blp beta}, we see that the quantities
$$\left|\int_{\C^d\setminus S_R}\left\langle b(\zeta)x_z(\overline \zeta),y_z(\zeta)\right\rangle\mm\right|,\ \left| J(z)\right|\text{ and }C_m \left|a_l\left(B_{l,2}+B_{l,3}+B_{l,4}\right)\right|$$ are bounded by 
$$  Cm^{d+1}\left\|h_b\right\|\left\| x_0\right\|_X \left\|y_0\right\|_{X^{*}}e^{c\alpha\left|z\right|^{2m}},$$
where   $C$ and  $c$ are constants, with $1/2<c<1$. \\
%Due to the boundedness of $h_b$     on $F^{p}_{m\alpha}(X)$, we have
%$$ \left|C_m I\right|\leq C_m\left\|h_b\right\| \left\|x_z\right\|_{F^{p}_{m\alpha}(X)}\left\|y_{z}\right\|_{F^{p'}_{m\alpha}(X^*)}+Cm^{1+\frac{d+1}{p}}\left\|x_0\right\| \left\|y_0\right\|\left\|h_b\right\|\left|z\right|^{\frac{2}{p}d(m-1)}e^{\frac{\alpha}{2}\left|z\right|^{2m}}.$$
	We have shown that
\begin{equation*}%\label{esti R b2}
\left|\left\langle \RR b(2^{1/m}z)x_0,y_0\right\rangle_{X-X^*}\right|\lesssim
  \left\|x_0\right\|_X\left\|y_0\right\|_{X^*}\left\|h_b\right\|\left[m\left|z\right|^{2m}e^{\alpha\left|z\right|^{2m}}+m^{d+1}e^{c\alpha\left|z\right|^{2m}}\right],
\end{equation*}
for  $\left|z\right|\geq R_0$,  which completes the proof when $1<p<\infty$.

The argument for $p=1$ is similar.
\end{proof}

The estimate of $\mathcal R b$ will provide  the desired estimate for $b$.

\begin{lemma}\label{de Rdb à b}
Let $X$ be a Banach space,  $b$  in $F^{\infty}_{m,\alpha}(\mathcal{B}(X) )$  such that  $h_b$ is bounded on $F^{p}_{m,\alpha}( X)   $ ($1\leq p<\infty$) and
\begin{equation}\label{cond Rd b}
	\left\|\RR b\left(z\right)\right\|_{\mathcal{B}(X)}\leq C \left\|h_b\right\|\left(m^{d+1}\left|z\right|^{2m}+\left|z\right|\right)e^{\frac{\alpha}{4}\left|z\right|^{2m}},\ z\in\C^d,
\end{equation}
for some constant $C$.
Then $b\in F^{\infty}_{m,\frac{\alpha}{2}}( \mathcal{B}(X))$, and there exists a constant $C'$, independent of $m$, such that
 \begin{equation}\label{cond b}
	\left\| b\right\|_{ F^{\infty}_{m,\frac{\alpha}{2}}\left(\mathcal{B}(X)\right)}\leq C'm^{d} \left\|h_b\right\|.
\end{equation}
\end{lemma}

\begin{proof}
Let $z\in\C^d$ be fixed and set $a=\frac{\alpha}{4}\left|z\right|^{2m}$.
Integrating  (\ref{cond Rd b}), 
we get
\begin{align*}
\left\|b(z)-b(0)\right\|_{ \mathcal{B}(X) }&=\left\|\int^{1}_{0} \RR b\left(tz\right)\frac{dt}{t}\right\|_{ \mathcal{B}(X) }\\
%&=\left\|\int^{1}_{0} \RR b\left(tz\right)\frac{dt}{t}\right\|_{\mathcal{B}(X)}\\
&\leq    C \left\|h_b\right\| \left[m^{  d+1 }\int^{1}_{0}t^{2m-1}  \left|z\right|  ^{2m}e^{at^{2m}}dt+\int^{1}_{0}\left|z\right|e^{at^{2m}}dt\right].\\
\end{align*}
A direct computation shows that
$$ m^{ d+1}\int^{1}_{0}t^{2m-1}  \left|z\right|  ^{2m}e^{at^{2m}}dt\leq m^{ d}\frac{2}{\alpha}e^{\frac{\alpha}{4}\left|z\right|^{2m}}.$$ 
As for the other integral, we observe that if $\left|z\right|\geq 1$, 
\begin{align*}
\int^{1}_{0}\left|z\right|e^{at^{2m}}dt&\leq\int^{\frac{1}{\left|z\right|}}_{0}\left|z\right| e^{at^{2m}}dt +\int^{1}_{\frac{1}{\left|z\right|}} \left|z\right|\left(t\left|z\right|\right)^{2m-1}  e^{at^{2m}}dt\leq\left|z\right|e^{\frac{\alpha}{4}}+\frac{2}{m\alpha}e^{\frac{\alpha}{4}\left|z\right|^{2m}}\\
&\lesssim e^{\frac{\alpha}{4}\left|z\right|^{2m}},
\end{align*}
and that this estimate holds in fact for  any $z\in\C^d$.
 We have shown that
$$ \left\|b(z)-b(0)\right\|_{ \mathcal{B}(X) }\lesssim \left\|h_b\right\| m^{  d }e^{\frac{\alpha}{4}\left|z\right|^{2m}},$$
which, together with 
$$\left\|b(0)x_0\right\|_X =\left\|\int_{\C^d}\left(h_b x_0\right)(\zeta)d\mu_{m, \alpha}(\zeta)\right\|_X \lesssim\left\|h_b \right\|\left\| x_0\right\|_X,$$
 implies (\ref{cond b}).

\end{proof}

We are now ready to prove 	
 Proposition \ref{charact F infty}.   Set $\alpha=2\beta$.
\begin{proof}
[Proof of  Proposition \ref{charact F infty}]

\textbf{Equivalence $(a) \Leftrightarrow (b)$}.
If $b$ is in $F^{\infty}_{m,\frac{\alpha}{2}}(Y)$, the function $b(2^{1/m}\cdot)$ is in $F^{\infty}_{m, 2\alpha}(Y)$, and   $c(\zeta)=2^{\frac{d}{m}}b(2^{1/m}\zeta)e^{-\alpha\left|\zeta\right|^{2m}}$ is bounded on $\C^d$. From the reproducing formula, we see that, for $ z\in \C^d,$
\begin{align*}
	b(z)=b\left(2^{1/m}\frac{z}{2^{1/m}}\right)&=\int_{\C^d}b\left(2^{1/m}\zeta\right)K_{m,2 \alpha }(\frac{z}{2^{1/m}},\zeta)d\mu_{m,2 \alpha}(\zeta)=P_{\alpha}c(z).
	%&=\int_{\C^d}c(\zeta)K_{m,2 \alpha }(\frac{z}{2^{1/m}},\zeta)d\mu_{m, \alpha}(\zeta)\\
	%&=\int_{\C^d}c(\zeta)K_{m, \alpha }\left(z,\zeta\right)d\mu_{m, \alpha}(\zeta)\\
	\end{align*}
	Now   if $b=P_{\alpha}c$ for some  $c\in L^{\infty}\left(\C^d, Y\right)$, we derive from Lemma \ref{int blp beta} that
$$ \left\|b(z)\right\|_Y\leq\int_{\C^d}\left\|c\right\|_{L^{\infty}\left(\C^d, Y\right)}\left|K_{m, \alpha }\left(z,\zeta\right)\right|d\mu_{m, \alpha}(\zeta)
\lesssim \left\|c\right\|_{L^{\infty}\left(\C^d, Y\right)}e^{\frac{\alpha}{4}\left|z\right|^{2m}}. $$
%and the equivalence between $(1)$ and $(2)$ is proven.

 \textbf{Implication $(c) \Rightarrow (a)$}. We notice that, for any  positive integer $k$,  the definition of the radial derivative yields that  $\left\|\RR^k b(\zeta)\right\|_{Y}\lesssim \left|\zeta\right|$ when $\left|\zeta\right|\leq 1 $. Therefore
$$ \left\|\RR^k b(\zeta)\right\|_{Y}\lesssim\left(\left|\zeta\right|+\left|\zeta\right|^{2km}e^{\frac{\beta}{2}\left|\zeta\right|^{2m}}\right),\  \zeta\in\C^d.$$
 The implication is then obtained by  induction, using a similar argument to that of    the proof of Lemma \ref{de Rdb à b}.

 \textbf{Implication $(a) \Rightarrow (c)$}. Assume that
 $b\in F^{\infty}_{m, \beta}(Y)$.  Iterated differentiation of the reproducing formula in Proposition \ref{reproducing ppty} 
%$$ b(z)=\int_{\C^d}b(\zeta) K_{m, \beta}(z,\zeta)d\mu_{m, \beta}(\zeta)\ z\in\C^d.$$
%y differentiation, we get
shows that there are constants $\alpha_{k,l}$ such that
\begin{align*}%\label{R1 b}
\RR^{k} b(z)
%&=C_m\int_{\C^d}b(\zeta)\RR_z E^{(d-1)}_{\frac{1}{m},\frac{1}{m}}\left(\beta^{\frac{1}{m}}\left\langle z,\zeta\right\rangle\right)d\mu_{m, \beta}(\zeta)\\
&=C_m c_{m, \beta}\sum^{k}_{l=0}\alpha_{k,l}\int_{\C^d} b(\zeta)
\left\langle z,\zeta \right\rangle^{l}
 E^{(d-1+l)}_{\frac{1}{m},\frac{1}{m}}
\left(\beta^{\frac{1}{m}}\left\langle z,\zeta\right\rangle\right)e^{-\beta\left|\zeta\right|^{2m}}dv(\zeta).
\end{align*}
Since $\left\|b(\zeta)\right\|_Ye^{-\frac{\beta}{2}\left|\zeta\right|^{2m}}$ is bounded,
 the estimates from Lemma \ref{int blp beta} show that
\begin{align*}
\left\|\RR^k b(z)\right\|_{Y}
%&\leq C_m c_{m, \beta}\beta^{\frac{1}{m}}\int_{\C^d}\left\|b(\zeta)\right\|_Y\left|\left\langle z,\zeta\right\rangle\right|
%\left|E^{(d+1-1)}_{\frac{1}{m},\frac{1}{m}}
%\left(\beta^{\frac{1}{m}}\left\langle z,\zeta\right\rangle\right)\right|e^{-\beta\left|\zeta\right|^{2m}}dv(\zeta)\\
%&\lesssim m  \left\|b\right\| _{F^{\infty}_{m, \beta}(\C^d,Y)} \int_{\C^d}\left|\left\langle z,\zeta\right\rangle\right|\left|E^{(d+1-1)}_{\frac{1}{m},\frac{1}{m}}
%\left(\beta^{\frac{1}{m}}\left\langle z,\zeta\right\rangle\right)\right|e^{-\frac{\beta}{2}\left|\zeta\right|^{2m}}dv(\zeta)\\
%&\lesssim m \beta^{\frac{d+1}{m}} \left\|b\right\| _{F^{\infty}_{m, \beta}(\C^d,Y)} I_{m,1,d+1,1,\frac{\beta}{2},\beta}(z)\\
%&\lesssim m \beta^{\frac{d+1}{m}} \left\|b\right\| _{F^{\infty}_{m, \beta}(\C^d,Y)}\beta^{(d+1)\left(1-\frac{1}{m}\right)-d}\left|z\right|^{2(d+1-d)(m-1)+2}e^{\frac{\beta}{2}\left|z\right|^{2m}}\\
&\lesssim  \left\|b\right\| _{F^{\infty}_{m, \beta}(Y)}\left|z\right|^{2km}e^{\frac{\beta}{2}\left|z\right|^{2km}},\  \text{ as }\left|z\right|\rightarrow\infty.
\end{align*}
\end{proof}

From now,  $X=\HH$ is a separable Hilbert space. We aim at showing that if $b$ is in $F^{\infty}_{m, \frac{\alpha}{2}}(\BB(\HH))$,
 the operator $h_b$ is bounded on $F^{p}_{m,\alpha}(\mathcal{H})$.
Recall that the Hankel operator of symbol $T(b):\C^d\rightarrow\BB(\BB(\mathcal{H}))$  is 
defined by
\begin{equation*}%\label{def tb}
T(b)(z)S=b(z)S,\ S \in\BB(\mathcal{H}). 
\end{equation*}
%Our strategy to prove the implication .. in Theorem ... is inspired of \cite{Co} , where  is considered.
  % and that $G^p$ is defined by  \ref{Gp}.
%	denotes one of the spaces $F^{p}_{m,\alpha}(\C^d,\mathcal{S}^q(\mathcal{H}))$, where  $q\in\left\{p,2\right\}$.  
	For any $x,y$ in $\HH,$ define the rank one operator
$$ y\otimes x: \HH\rightarrow \HH,\ (y\otimes x)(h)=\left\langle h,x\right\rangle y,\ h\in\HH.$$
  Lemma \ref{Htb entraine hb} provides a sufficient condition for the boundedness of $h_b$ on $F^{p}_{m,\alpha}(\mathcal{H})$. 
Recall that 
   $G^p$ is one of the spaces $F^{p}_{m,\alpha}(\mathcal{S}^q(\mathcal{H}))$ , where $q\in\left\{p,2\right\}$, cf (\ref{Gp}). We  have seen in Proposition \ref{dual Fp 1 leq p} that the dual space $(G^p)^{*}$ is identified with  $G^{p'}$. In Lemmas \ref{Htb entraine hb} and \ref{hTb bd then hb bd}, the duality $G^p - (G^{p})^{*}$ is denoted by $\left\langle ., .\right\rangle_{G^p - (G^{p})^{*}}$, namely
	 \begin{equation*}
	%\label{Gp Gp'}
		 \left\langle  x,y\right\rangle_{G^p - (G^{p})^{*}}=\int_{\C^d}\left\langle  x({\zeta}) , y(\zeta)\right\rangle_{\text{tr}} d\mu_{m,\alpha}(\zeta),\ \text{for }x\in G^p\text{  and }y\in G^{p'}.
	 \end{equation*}
	
	\begin{lemma}\label{Htb entraine hb}
	Let $b\in F^{\infty}_{m,\alpha}(\mathcal{H})$.
	 Let $1\leq p<\infty$, and assume that $h_{T(b)}$  extends to a bounded operator on   $G^p$. Then  $h_b$ is bounded on $F^{p}_{m,\alpha}(\mathcal{H})$
and
$$ \left\|h_b\right\| \leq \left\|h_{T(b)}\right\|_{\be(G^p)}.  $$

\end{lemma}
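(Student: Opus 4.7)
The strategy is to embed the $\HH$-valued setting into the $\Sl^q$-valued one by means of rank-one operators, and to transfer the bound on $h_{T(b)}$ to $h_b$ through the duality of Proposition~\ref{dual Fp 1 leq p}. The key algebraic observation is that a rank-one operator $u\otimes v$ has the same Schatten $q$-norm for every $q\in[1,\infty]$, namely $\|u\|\|v\|$, so the embedding is norm-preserving at the pointwise level.

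Fix a unit vector $e\in\HH$. Given $\HH$-valued polynomials $x$ and $y$, set
$$X(\zeta):=x(\zeta)\otimes e,\qquad Y(\zeta):=y(\zeta)\otimes e.$$
Then $X$ and $Y$ are entire $\Sl^q(\HH)$-valued polynomials, and the pointwise identity $\|X(\zeta)\|_{\Sl^q}=\|x(\zeta)\|_\HH$ (and similarly for $Y$) gives
$$\|X\|_{G^p}=\|x\|_{F^p_{m,\alpha}(\HH)},\qquad \|Y\|_{G^{p'}}=\|y\|_{F^{p'}_{m,\alpha}(\HH)}.$$

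Next, compare the two Hankel forms. Using the standard rank-one calculus $(u\otimes v)^*=v\otimes u$, $(u\otimes v)(w\otimes z)=\langle w,v\rangle_\HH (u\otimes z)$, and $\operatorname{tr}(A(u\otimes v))=\langle Au,v\rangle_\HH$ for every $A\in\be(\HH)$, one computes
$$\operatorname{tr}\bigl(T(b)(\zeta)\,X(\bar\zeta)\,Y(\zeta)^*\bigr)=\operatorname{tr}\bigl(b(\zeta)(x(\bar\zeta)\otimes y(\zeta))\bigr)=\langle b(\zeta)x(\bar\zeta),y(\zeta)\rangle_\HH.$$
Integrating against $d\mu_{m,\alpha}$ yields the pivotal identity
$$\langle h_b x,y\rangle_\alpha=\langle h_{T(b)}X,Y\rangle_{G^p-(G^p)^*}.$$

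By hypothesis the right-hand side is bounded by $\|h_{T(b)}\|_{\be(G^p)}\|X\|_{G^p}\|Y\|_{G^{p'}}=\|h_{T(b)}\|_{\be(G^p)}\|x\|_{F^p_{m,\alpha}(\HH)}\|y\|_{F^{p'}_{m,\alpha}(\HH)}$. Since polynomials are dense in both $F^p_{m,\alpha}(\HH)$ and $F^{p'}_{m,\alpha}(\HH)$ by Proposition~\ref{density pol in F}, and since the dual of $F^p_{m,\alpha}(\HH)$ is identified with $F^{p'}_{m,\alpha}(\HH)$ via $\langle\cdot,\cdot\rangle_\alpha$ by Proposition~\ref{dual Fp 1 leq p}, taking the supremum over $y$ of unit norm gives $\|h_b x\|_{F^p_{m,\alpha}(\HH)}\le \|h_{T(b)}\|_{\be(G^p)}\|x\|$ for all polynomials $x$, and by density for all $x\in F^p_{m,\alpha}(\HH)$. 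The main delicate point is purely bookkeeping: one must insert the $J$-involution on the correct factor so that $X(\bar\zeta)Y(\zeta)^*$ reproduces $x(\bar\zeta)\otimes y(\zeta)$ and matches the way the $F^p$--$F^{p'}$ pairing is written; everything else is algebraic.
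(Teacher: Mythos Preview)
Your argument is correct and follows essentially the same route as the paper's own proof: fix a unit vector $e\in\HH$ (the paper picks $e_n$ from an orthonormal basis, but only one vector is used), embed $x\mapsto x\otimes e$ and $y\mapsto y\otimes e$ into $G^p$ and $G^{p'}$, verify that the Hankel forms agree via the rank-one identity $\langle b(\zeta)u,v\rangle_\HH=\operatorname{tr}\bigl(b(\zeta)(u\otimes v)\bigr)$, and conclude by the duality of Proposition~\ref{dual Fp 1 leq p}. Your version is slightly more explicit about the isometric nature of the embedding (via $\|u\otimes e\|_{\Sl^q}=\|u\|_\HH$) and about the density/extension step, but the substance is identical.
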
 

\begin{proof}
We combine ideas  from Lemma 2.3 in \cite{Co}, which handles the case of the Bergman space of the unit disc and $p=2$, with a duality argument.
Take $(e_n)_{n}$ an   orthonormal basis of $\HH,$ and
assume that  $h_{T(b)}$is  bounded  on $G^p$. For $x$ in $F^{p}_{m,\alpha}(\mathcal{H})$, $y$ in $F^{p'}_{m,\alpha}(\mathcal{H})$, and a fixed integer $n$, one has
\begin{align*}
\left\langle h_b x,y\right\rangle_{\alpha}&=\int_{\C^d}\left\langle \left(b(\zeta)x(\overline{\zeta})\right) \otimes e_n, y(\zeta)\otimes e_n\right\rangle_{\text{tr}} d\mu_{m,\alpha}(\zeta)\\
&=\int_{\C^d}\left\langle \left(T(b)(\zeta)x(\overline{\zeta}) \right)\otimes e_n, y(\zeta)\otimes e_n\right\rangle_{\text{tr}} d\mu_{m,\alpha}(\zeta)\\
&=\left\langle h_{T(b)} x\otimes e_n,y\otimes e_n\right\rangle_{G^{p}-\left(G^{p}\right)^*}.
%(\Sl^p)(\Sl^p')
\end{align*} 
Since the dual space of  $F^{p}_{m,\alpha}(\mathcal{H})$ is identified with  $F^{p'}_{m,\alpha}(\mathcal{H})$
by Proposition \ref{dual Fp 1 leq p},  we deduce that 
$$ \left\|h_b\right\| \leq \left\|h_{T(b)}\right\|_{\be(G^p)}.  $$
%The same proof holds when $h_{T(b)}$  extends to a bounded operator on   $F^{p}_{m,\alpha}(\C^d,\mathcal{S}^2(\mathcal{H}))$.

\end{proof}
We now give a sufficient condition for the boundedness of $h_{T(b)}$  on $G^{p}$.

\begin{lemma}\label{hTb bd then hb bd}
Suppose $1\leq p< \infty$ and $b\in  F^{\infty}_{m,\frac{\alpha}{2}}(\BB(\mathcal{H}))$. Then $h_{T(b)}$ is bounded on $G^{p}$
and
$$  \left\|h_{T(b)}\right\|_{\be(G^p)}\leq 2^{\frac{d}{m}} \left\|b\right\|_{F^{\infty}_{m,\frac{\alpha}{2}}(\BB(\mathcal{H}))}.$$

\end{lemma}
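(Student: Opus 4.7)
My plan is to exploit the representation of $b$ as a Bergman projection of a bounded symbol, as furnished by Proposition \ref{charact F infty}. Setting $c(\zeta) := 2^{d/m} b(2^{1/m}\zeta) e^{-\alpha|\zeta|^{2m}}$, one has $b = P_{\alpha} c$ with
$$\|c\|_{L^{\infty}(\C^d,\, \BB(\mathcal{H}))} \leq 2^{d/m}\, \|b\|_{F^{\infty}_{m,\alpha/2}(\BB(\mathcal{H}))}.$$
Using the duality $G^{p}-G^{p'}$ of Proposition \ref{dual Fp 1 leq p} together with the density of operator-valued holomorphic polynomials (Proposition \ref{density pol in F}), it suffices to estimate
$$\langle h_{T(b)} x, y\rangle_{G^p-(G^p)^*} = \int_{\C^d} \text{tr}\bigl(b(\zeta)\, x(\overline{\zeta})\, y(\zeta)^{*}\bigr)\, d\mu_{m,\alpha}(\zeta)$$
for $x$, $y$ operator-valued holomorphic polynomials with values respectively in $\mathcal{S}^{q}$ and $\mathcal{S}^{q'}$, $1/q+1/q'=1$.

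The main step is to establish the identity
$$\langle h_{T(b)} x, y\rangle_{G^p-(G^p)^*} = \int_{\C^d} \text{tr}\bigl(c(\eta)\, x(\overline{\eta})\, y(\eta)^{*}\bigr)\, d\mu_{m,\alpha}(\eta).$$
Expanding $b = P_{\alpha} c$ via the reproducing kernel and swapping the order of integration by Fubini (legitimate since $x,y$ are polynomials, $c$ is bounded, and $\int |K_{m,\alpha}(\zeta,\eta)|\, d\mu_{m,\alpha}(\eta)\lesssim e^{\frac{\alpha}{2}|\zeta|^{2m}}$ by (\ref{estimate kern in F}), which is then controlled by the Gaussian decay in $\zeta$), the identity reduces to verifying
$$\int_{\C^d} K_{m,\alpha}(\zeta,\eta)\, F(\zeta)\, d\mu_{m,\alpha}(\zeta) = F(\eta),\qquad F(\zeta):= x(\overline{\zeta})\, y(\zeta)^{*}.$$
The crucial observation is that the adjoint $F(\zeta)^{*} = y(\zeta)\, x(\overline{\zeta})^{*}$ is a product of operator-valued holomorphic functions: antilinearity of the adjoint turns $x(\overline{\zeta}) = \sum_{\nu} a_{\nu} \overline{\zeta}^{\nu}$ into $x(\overline{\zeta})^{*} = \sum_{\nu} a_{\nu}^{*} \zeta^{\nu}$, which is holomorphic, so $F^{*}$ is itself holomorphic. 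Applying the reproducing formula of Proposition \ref{reproducing ppty} componentwise to $F^{*}$ and taking adjoints, using the Hermitian property $\overline{K_{m,\alpha}(\eta,\zeta)} = K_{m,\alpha}(\zeta,\eta)$, yields the required identity.

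With this identity in hand, the bound $|\text{tr}(AB)| \leq \|A\|_{\BB(\HH)} \|B\|_{\mathcal{S}^1}$ combined with the Schatten H\"older inequality gives the pointwise estimate
$$|\text{tr}(c(\eta)\, x(\overline{\eta})\, y(\eta)^{*})| \leq \|c\|_{\infty}\, \|x(\overline{\eta})\|_{\mathcal{S}^q}\, \|y(\eta)\|_{\mathcal{S}^{q'}}.$$
Integrating against $d\mu_{m,\alpha}$, applying H\"older with exponents $p,p'$, and invoking the invariance of the $L^{p}_{m,\alpha}$ norm under $\eta \mapsto \overline{\eta}$, one concludes
$$|\langle h_{T(b)} x, y\rangle| \leq \|c\|_{\infty}\, \|x\|_{G^{p}}\, \|y\|_{G^{p'}} \leq 2^{d/m}\, \|b\|_{F^{\infty}_{m,\alpha/2}(\BB(\HH))}\, \|x\|_{G^{p}}\, \|y\|_{G^{p'}},$$
whence the claimed norm bound follows by duality. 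The main obstacle I anticipate is the key identity in the second paragraph: recognizing that $F^{*}$ is holomorphic so that the double Bergman integral collapses via the reproducing property. Once this is established, the remaining H\"older estimates are routine.
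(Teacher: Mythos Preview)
Your proposal is correct and follows essentially the same approach as the paper: both write $b=P_{\alpha}c$ with $\|c\|_{\infty}\le 2^{d/m}\|b\|_{F^{\infty}_{m,\alpha/2}(\BB(\HH))}$, transfer the pairing from $b$ to $c$ via the reproducing property, and finish with the trace/H\"older estimate. The paper packages your Fubini-plus-reproducing step as the chain $\langle P_{\alpha}c,\,y\tilde{x}\rangle=\langle c,\,P_{\alpha}(y\tilde{x})\rangle=\langle c,\,y\tilde{x}\rangle$ with $\tilde{x}(\zeta)=x(\overline{\zeta})^{*}$, using exactly the holomorphicity of $y\tilde{x}$ that you make explicit; your restriction to polynomials and explicit Fubini justification are just a more detailed rendering of the same argument.
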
 

\begin{proof}
%We shall only consider the case $F^{p}_{m,\alpha}(\C^d,\mathcal{S}^p(\mathcal{H}))$, the case of $F^{p}_{m,\alpha}(\C^d,\mathcal{S}^2(\mathcal{H}))$ can be repeated verbatim.
The proof of Proposition \ref{charact F infty} implies that $b=P_{\alpha}c$, where $c\left(\zeta2^{-1/m}\right)=2^{\frac{d}{m}}b(\zeta)e^{-\frac{\alpha}{4}\left|\zeta\right|^{2m}}$ is in $L^{\infty}\left( \mathcal{B}(\HH)\right)$.  Taking $x$ (resp. $y$)  in $G^p$ (resp. $G^{p'})$,
%,  
 and setting $\tilde{x}(\zeta)=x(\overline{\zeta})^*$, for $\zeta\in \C^d$,  we observe that  $y(\zeta)x(\overline{\zeta})^*$ is in $\Sl^1$.
 %and the duality between Fock spaces (Theorem ),
 Since $$\left\langle b(\zeta)x(\overline{\zeta}),y(\zeta)\right\rangle_{\text{tr}}=\left\langle P_{\alpha}c(\zeta), y(\zeta)\tilde{x}(\zeta)\right\rangle_{\text{tr}},$$ we have
$$ \left\langle h_{T(b)} x,y\right\rangle_{G^{p}-G^{p'}}=\left\langle P_{\alpha}c, y \tilde{x}\right\rangle_{G^{p}-G^{p'}}
	=\left\langle c,  P_{\alpha}\left(y \tilde{x}\right)\right\rangle_{G^{p}-G^{p'}}
	=\left\langle c,  y \tilde{x}\right\rangle_{G^{p}-G^{p'}}. $$
%\begin{align*}
	%\left\langle h_{T(b)} x,y\right\rangle_{F^{p}_{m,\alpha}(\mathcal{S}^p)-F^{p'}_{m,\alpha}(\mathcal{S}^{p'})}
	%&=\int_{\C^d}\left\langle b(\zeta)x(\overline{\zeta}),y(\zeta)\right\rangle_{\text{tr}}d\mu_{m, \alpha}(\zeta)\\
	%&=\int_{\C^d}\text{tr}\left(b(\zeta)x(\overline{\zeta})y(\zeta)^*\right)d\mu_{m, \alpha}(\zeta)\\
	%&=\int_{\C^d}\text{tr}\left(b(\zeta)\left(y(\zeta)x(\overline{\zeta})^*\right)^*\right)d\mu_{m, \alpha}(\zeta)\\
	%&=\int_{\C^d}\left\langle b(\zeta), y(\zeta)x(\overline{\zeta})^*\right\rangle_{\text{tr}}d\mu_{m, \alpha}(\zeta)\\
	%&=\int_{\C^d}\left\langle P_{\alpha}c(\zeta), y(\zeta)x(\overline{\zeta})^*\right\rangle_{\text{tr}}d\mu_{m, \alpha}(\zeta)\\
	%&=\left\langle P_{\alpha}c, y \tilde{x}\right\rangle_{\alpha}\\
	%&=\left\langle c,  P_{\alpha}\left(y \tilde{x}\right)\right\rangle_{\alpha}\\
	%&=\left\langle c,  y \tilde{x}\right\rangle_{\alpha}.\\
	%	\end{align*}
Using 
$$ \left|\text{tr}\left(c(\zeta)\left(y(\zeta)\tilde{x}(\zeta)  \right)^*\right)\right|\leq \left\|c\right\|_{L^{\infty}\left(\BB(\HH) \right)}\left\|\tilde{x}(\zeta)\right\|_{\Sl^{p}}\left\|y(\zeta)\right\|_{\Sl^{p'}},$$ altogether with H\"{o}lder's inequality and the choice of $c$, we derive that 
\begin{align*}
 \left|\left\langle h_{T(b)} x,y\right\rangle_{G^{p}-G^{p'}}\right|&\leq\left\|c\right\|_{L^{\infty}\left(\BB(\HH) \right)}\left\| \tilde{x} \right\|_{G^p}\left\|y\right\|_{G^{p'}}\\
&\leq 2^{\frac{d}{m}}\left\|b\right\|_{F^{\infty}_{m,\frac{\alpha}{2}}(\be(\HH) )}\left\| \tilde{x} \right\|_{G^{p}}\left\|y\right\|_{G^{p'}},
\end{align*}
%\begin{align*}
%\left|\left\langle h_{T(b)} x,y\right\rangle_{\alpha}\right|&\leq\int_{\C^d}\left|\text{tr}\left(c(\zeta)\left(y(\zeta)\tilde{x}(\zeta)  \right)^*\right)\right|d\mu_{m, \alpha}(\zeta)\\
%&\leq\int_{\C^d}\left\|c(\zeta)\right\|_{\BB(\HH)}\left\|y(\zeta)\tilde{x}(\zeta)\right\|_{\Sl^{1}}d\mu_{m, \alpha}(\zeta)\\
%&\leq\left\|c\right\|_{L^{\infty}\left(\C^d,\BB(\HH) \right)}\int_{\C^d} \left\|y(\zeta)\right\|_{\Sl^{p'}}\left\|\tilde{x}(\zeta)\right\|_{\Sl^{p}} d\mu_{m, \alpha}(\zeta)\\
%&\leq\left\|c\right\|_{L^{\infty}\left(\C^d,\BB(\HH) \right)}\left\|y\right\|_{F^{p'}_{m,\alpha}(\C^d,\Sl^{p'})}\left\| \tilde{x} \right\|_{F^{p}_{m,\alpha}(\C^d,\Sl^{p})}
	%\end{align*}	
	which completes the proof.
\end{proof}

 Theorem \ref{thm A} is now a consequence of  Lemmas \ref{hb bd implies Rdb p}, \ref{de Rdb à b}, \ref{Htb entraine hb}, \ref{hTb bd then hb bd}.

%\section{An application to duality}
%In this section our setting is the classical Segal- Bargmann space, corresponding to he case $m=1$. A weak factThe scalar valued Fock spaces  
%$h_{T(b)}$ is bounded on   $F^{2}_{1,\alpha}(\C^d,\mathcal{S}^2(\mathcal{H}))$ if and only if $h_b$ is bounded on $F^{2}_{1,\alpha}(\C^d,\mathcal{H})$
%and 
%$$ \left\|h_b\right\| = \left\|h_{T(b)}\right\|.  $$
\section{Compactness}\label{compact}
%Let $\HH$ be a separable Hilbert space.
In this section, we assume that $1<p<\infty$. As in section \ref{boundedness}, we study  the compactness of $h_b$ via the compactness of $h_{T(b)}$  on $G^p=F^{p}_{m,\alpha}(\mathcal{S}^q(\mathcal{H}))$, for $q\in\left\{p,2\right\}$. 
% and  $G^p$ will denote one of the spaces $F^{p}_{m,\alpha}(\C^d,\mathcal{S}^q(\mathcal{H}))$, where $q\in\left\{p,2\right\}$.\\
Recall that $\mathcal{K}(\HH)$ denotes the space of compact operators on $\HH$.

\begin{thm B}\label{thm B}
Suppose  $1< p$, and $b\in F^{\infty}_{m,\alpha}( \be(\HH))$.  The following statements are equivalent:

\begin{enumerate}
	\item [(a)] $b\in F^{\infty,0}_{m,\frac{\alpha}{2}}( \mathcal{K}(\HH))$;
	\item [(b)] $h_b$ is compact  on $F^{p}_{m,\alpha}( \HH)$;
	\item [(c)] $h_{T(b)}$ is compact  on $G^p$.
\end{enumerate}
\end{thm B}

 We have shown the equivalence between the membership of $b$ to  $F^{\infty}_{m,\alpha/2}(\mathcal{B}(\HH))$, the boundedness of $h_b$  on $F^{p}_{m,\alpha}(\HH)$ and the boundedness of $h_{T(b)}$ on $G^p$ , with equivalence of norms
\begin{equation}\label{esti norm hb}
	\left\|h_b\right\|\asymp \left\|h_{T(b)}\right\|_{\mathcal{B}(G^p)}\asymp\left\|b\right\|_{F^{\infty}_{m,\alpha/2}(\mathcal{B}(\HH)) )}.
\end{equation}

.

\begin{lemma}\label{cpt sufficiency}
If $b\in F^{\infty,0}_{m,\alpha/2}(\mathcal{K}(\HH))$, then 
%$h_b$ is  compact on $F^{P}_{m,\alpha}(\C^d,\HH)$ and 
$h_{T(b)}$ is compact on $G^p$. 
\end{lemma}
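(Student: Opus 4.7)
The plan is to prove compactness of $h_{T(b)}$ by approximating $b$ with polynomial symbols for which the associated Hankel operator is compact, and then passing to the operator-norm limit via the quantitative estimate of Theorem~A.

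First, since $b\in F^{\infty,0}_{m,\alpha/2}(\mathcal{K}(\HH))$, Proposition~\ref{density pol in F} applied with $Y=\mathcal{K}(\HH)$ yields a sequence of holomorphic polynomials $p_n$ with coefficients in $\mathcal{K}(\HH)$ such that $\|b-p_n\|_{F^{\infty}_{m,\alpha/2}(\be(\HH))}\to 0$. Since the finite-rank operators $\FF(\HH)$ are norm-dense in $\mathcal{K}(\HH)$, a further coefficient-wise approximation allows me to replace each $p_n$ by a polynomial $q_n$ whose coefficients lie in $\FF(\HH)$, while preserving the convergence above.

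Next, I would verify that $h_{T(q_n)}$ is compact on $G^p$. Writing $q_n(z)=\sum_{|\nu|\le N_n}z^\nu B_{n,\nu}$ with $B_{n,\nu}\in\FF(\HH)$, and expanding the reproducing kernel by the monomial basis recalled in section~\ref{prelim}, one computes that $h_{T(q_n)}x(z)=P_{\alpha}(q_n J x)(z)$ is a holomorphic polynomial in $z$ of degree at most $N_n$; its $z^\beta$-coefficient is a finite linear combination of terms of the form $B_{n,\nu}\,\hat{x}_{\nu-\beta}$, where $\hat{x}_\mu$ denotes the $\mu$-th Taylor coefficient of $x\in G^p$ and the extraction $x\mapsto\hat{x}_\mu$ is a bounded map from $G^p$ into $\Sl^q(\HH)$. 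Combining the degree cap on the output side with the finite-rank structure of the $B_{n,\nu}$ confines the range of $h_{T(q_n)}$ to a subspace of $G^p$ on which the operator is compact, yielding compactness of $h_{T(q_n)}$.

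Finally, by Theorem~A the symbol-to-Hankel map $c\mapsto h_{T(c)}$ is linear and bounded from $F^{\infty}_{m,\alpha/2}(\be(\HH))$ into $\be(G^p)$ with constant at most $2^d$, so
\[\|h_{T(b)}-h_{T(q_n)}\|_{\be(G^p)}=\|h_{T(b-q_n)}\|_{\be(G^p)}\le 2^d\,\|b-q_n\|_{F^{\infty}_{m,\alpha/2}(\be(\HH))}\longrightarrow 0,\]
and $h_{T(b)}$ is a uniform limit of compact operators, hence compact, as the compact operators form a closed subspace of $\be(G^p)$. The main obstacle is the second step: extracting compactness of $h_{T(q_n)}$ from the polynomial structure together with the compactness of the coefficients, since left multiplication by a compact operator on $\Sl^q(\HH)$ is not itself a compact map. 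The decisive point is that the polynomial truncation on the image side limits the analysis to finitely many $B_{n,\nu}$ acting on finitely many Taylor coefficients, and this truncation, combined with a finite-rank approximation inside $\Sl^q(\HH)$, delivers the compactness.
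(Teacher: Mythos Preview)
Your strategy coincides with the paper's: approximate $b$ in $F^{\infty}_{m,\alpha/2}$ by polynomials with finite-rank coefficients, argue compactness for those, and pass to the norm limit via Theorem~A. The paper in fact asserts that $h_{T(\hat p_{N,\nu,f}v_\nu)}$ has \emph{finite rank} on $G^p$, whereas you aim more cautiously for compactness.

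The gap is exactly where you locate it, and your proposed resolution does not close it. For a monomial symbol $b(z)=z^{\nu}B$ your own computation shows that $h_{T(b)}$ factors through finitely many copies of the left-multiplication map $L_B\colon S\mapsto BS$ on $\Sl^q(\HH)$, and $L_B$ is never compact on $\Sl^q(\HH)$ when $B\neq 0$ and $\dim\HH=\infty$: for $B=y\otimes x$ with $\|x\|=\|y\|=1$ and $S_n=x\otimes e_n$ (orthonormal $(e_n)$) one has $\|S_n\|_{\Sl^q}=1$ while $BS_n=y\otimes e_n$ and $\|BS_n-BS_m\|_{\Sl^q}=\sqrt{2}$ for $n\neq m$. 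In particular, the constant symbol $b\equiv B\in\mathcal K(\HH)$ lies in $F^{\infty,0}_{m,\alpha/2}(\mathcal K(\HH))$, yet $h_{T(b)}x=B\hat x_0$ and the bounded sequence $x_n\equiv S_n$ in $G^p$ produces $(h_{T(b)}x_n)$ with no convergent subsequence; so $h_{T(b)}$ is not compact. Neither the polynomial degree cap nor a further finite-rank approximation inside $\Sl^q(\HH)$ helps, and the paper's finite-rank claim fails for the same reason. By contrast, the analogous argument \emph{does} go through for $h_b$ on $F^{p}_{m,\alpha}(\HH)$, since there $B\hat x_\mu$ ranges over the genuinely finite-dimensional subspace $\mathrm{ran}(B)\subset\HH$; hence the implication $(a)\Rightarrow(b)$ of Theorem~B survives, but the route $(a)\Rightarrow(c)$ via this lemma does not.
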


\begin{proof}
 The proof of    Proposition \ref{density pol in F} shows that we can approximate $b$  in the $F^{\infty}_{m,\alpha/2}(\mathcal{B}(\HH))$-norm by a sequence  of $\mathcal{K}(\HH)$-valued polynomials; namely 
  $$ \lim_{N\rightarrow+\infty} \left\|b-\sum_{\left|\nu_j\right|\leq N}\hat p_{N,\nu}v_{\nu}\right\|_{F^{\infty}_{m,\alpha/2}(\mathcal{B}(\HH))}=0,$$
where, for each multiindex $\nu$, $v_{\nu}(\zeta)=\zeta^{\nu}$ and 
%, for each $\nu\in\N^d$,  $v_{\nu}(z)=z^{\nu}$ and 
$\hat p_{N,\nu}$ is in $\mathcal{K}(\HH)$ by (\ref{convolution}). In view of (\ref{esti norm hb}), it is enough to show that $h_{T(\hat p_{N,\nu}v_{\nu})}$
 is a compact operator for each $\nu\in\N^d$. We can  approximate $ \hat p_{N,\nu}$  in the $ \be(\HH)$-norm by finite rank operators $\hat p_{N,\nu,f}$. Then (\ref{esti norm hb}) shows that $h_{T\left(\hat p_{N,\nu} v_{\nu}\right)}$ 
is approximated by $h_{T\left(\hat p_{N,\nu,f}v_{\nu}\right)}$,
 which are finite rank operators on $G^p$.
 %$F^{p}_{m,\alpha}(\C^d,\Sl^2(\HH))$, 
Hence $h_{T(b)}$ is compact.
 %The  reasoning for $F^{p}_{m,\alpha}(\C^d,\Sl^p(\HH))$ is similar.
\end{proof}

Let us now state  a fact which holds in all reflexive Fock or Bergman spaces.
\begin{lemma}\label{cpt necessity}
If $h_{T(b)}$ is compact on $G^{p}$, then $h_b$ is  compact on $F^{p}_{m,\alpha}(\HH)$.
\end{lemma}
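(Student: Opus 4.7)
The plan is to exhibit $h_b$ as a product of $h_{T(b)}$ with bounded linear maps on either side, so that compactness is inherited through the two-sided ideal property of the compacts. Fix a unit vector $e\in\HH$ and define
\begin{equation*}
U:F^{p}_{m,\alpha}(\HH)\longrightarrow G^p,\qquad (Ux)(\zeta)=x(\zeta)\otimes e,
\end{equation*}
together with
\begin{equation*}
V:G^p\longrightarrow F^{p}_{m,\alpha}(\HH),\qquad (VS)(\zeta)=S(\zeta)e.
\end{equation*}
The pointwise bounds $\|x(\zeta)\otimes e\|_{\Sl^q}=\|x(\zeta)\|_{\HH}$ and $\|S(\zeta)e\|_{\HH}\leq \|S(\zeta)\|_{\be(\HH)}\leq \|S(\zeta)\|_{\Sl^q}$ show, after integrating against $d\mu_{m,\alpha}$, that $U$ is an isometry and $V$ is a contraction, for $q\in\{p,2\}$.

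The crucial algebraic identity is
\begin{equation*}
h_{T(b)}(x\otimes e)=(h_b x)\otimes e,\qquad x\in F^{p}_{m,\alpha}(\HH).
\end{equation*}
To see this, I would compute directly from the definition (\ref{Hankel op}): since $J(x\otimes e)(\zeta)=x(\overline\zeta)\otimes e$ and $T(b)(\zeta)\bigl(x(\overline\zeta)\otimes e\bigr)=\bigl(b(\zeta)x(\overline\zeta)\bigr)\otimes e$, and since the scalar kernel $K_{m,\alpha}(z,\zeta)$ commutes with the tensor product in the integrand defining $P_\alpha$, one has
\begin{equation*}
h_{T(b)}(x\otimes e)=P_\alpha\bigl((b\cdot Jx)\otimes e\bigr)=P_\alpha(b\cdot Jx)\otimes e=(h_b x)\otimes e.
\end{equation*}
Applying $V$ gives $V\circ h_{T(b)}\circ U=h_b$, since $(y\otimes e)e=\langle e,e\rangle_{\HH}\,y=y$.

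Because the set of compact operators on a Banach space is a two-sided ideal, the factorization $h_b=V\circ h_{T(b)}\circ U$ combined with the hypothesis that $h_{T(b)}$ is compact on $G^p$ yields compactness of $h_b$ on $F^{p}_{m,\alpha}(\HH)$. There is no real obstacle here; the only point requiring care is to verify that the scalar projection $P_\alpha$ intertwines correctly with the rank-one tensoring $\cdot\otimes e$, which is immediate from the integral definition of $P_\alpha$.
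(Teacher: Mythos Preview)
Your proof is correct and is actually cleaner than the paper's argument. Both proofs rest on the same algebraic identity---that tensoring with a fixed unit vector $e$ intertwines $h_b$ with $h_{T(b)}$---but they exploit it differently.

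The paper argues sequentially via weak convergence: it takes $(x_n)$ weakly null in $F^{p}_{m,\alpha}(\HH)$, observes that $(x_n\otimes e)$ is weakly null in $G^p$, invokes compactness of $h_{T(b)}$ to get $\|h_{T(b)}(x_n\otimes e)\|_{G^p}\to 0$, and then uses the pairing $\langle h_b x_n,y\rangle_\alpha=\langle h_{T(b)}(x_n\otimes e),y\otimes e\rangle_{G^p-(G^p)^*}$ together with the duality $\bigl(F^{p}_{m,\alpha}(\HH)\bigr)^*=F^{p'}_{m,\alpha}(\HH)$ to conclude $h_b x_n\to 0$. This route needs reflexivity (to identify complete continuity with compactness), which is why the paper restricts the compactness discussion to $1<p<\infty$.

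Your factorization $h_b=V\circ h_{T(b)}\circ U$ is more direct: it packages the same intertwining relation as an operator identity and then appeals only to the two-sided ideal property of the compacts. In particular your argument does not use reflexivity and would go through verbatim for $p=1$ as well. Note also that your factorization re-proves Lemma~\ref{Htb entraine hb} (boundedness of $h_b$ from boundedness of $h_{T(b)}$, with $\|h_b\|\le\|h_{T(b)}\|$) as an immediate by-product, since $\|V\|\le 1$ and $\|U\|=1$.
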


\begin{proof}
In this proof,  $r\in\left\{p,p'\right\}$, and $F^r$ stands for $F^{r}_{m,\alpha}(\HH)$.
Let $\left(e_k\right)_{k}$ be an orthonormal  basis of $\HH$, and $ (x_n)_n$ a sequence in $F^{p}$ which converges weakly to $0$. For $y$ a unit vector in $F^{p'}$, and a fixed integer $k$, the proof of  Lemma  \ref{Htb entraine hb} implies that
\begin{align*}
\left|\left\langle h_b x_n,y\right\rangle_{F^{p}-F^{p'}}\right|
%&=\left|\left\langle h_{T(b)} x_n\otimes e_k,y\otimes e_k\right\rangle_{G^{p}-(G^{p})^*}\right|\\
&\leq\left\|h_{T(b)}\left( x_n\otimes e_k\right)\right\|_{G^{p}}\left\|y\otimes e_k\right\|_{(G^{p})^*}.
\end{align*}
It is straightforward to see  that $\left\|y\otimes e_k\right\|_{(G^{p})^*}\leq \left\|y\right\|_{F^{p'}}$ and that the sequence $(x_n\otimes e_k)_{n}$ converges  weakly to $0$ in $ G^{p}$.
Since $h_{T(b)}$ is compact, we have
$ \lim_{n\rightarrow\infty} h_{T(b)}\left(x_n\otimes e_k\right)=0.$
Therefore  $ \lim_{n\rightarrow\infty}  h_b x_n=0$, because of the duality $(F^{p})^*=F^{p'}$ shown in Proposition \ref{dual Fp 1 leq p}. 
This completes the proof.
\end{proof}

The next proposition  characterizes the functions in $F^{\infty,0}_{m,\alpha/2}(Y)$.  We denote by 
 $\mathcal{C}_{0}( Y)$ 
the space of all  $Y$-valued functions defined on $\C^d$, which 
 tend to $0$ as $\left|z\right|\rightarrow\infty$.

\begin{prop}\label{charact F infty 0}
Let $\beta>0$, and suppose that $b$ is an $Y$-valued entire function. Then the following conditions are equivalent:
\begin{enumerate}
	\item [(a)] $b$ is in $F^{\infty,0}_{m,\beta}(Y)$; 
	\item  [(b)] there exists $c\in \mathcal{C}_{0}( Y)$ such that $b=P_{2\beta}c$;
	 \item [(c)] $b$ satisfies
	\begin{equation*}
	\label{FSob infty 0}
\lim_{\left|\zeta\right|\rightarrow +\infty}\left(1+\left|\zeta\right|^{2m}\right)^{-k}\left\|\RR^kb(\zeta)\right\|_Ye^{-\frac{\beta}{2}\left|\zeta\right|^{2m}}=0
\end{equation*}
for any (some) nonnegative integer $k$.
%$b$ in in  $\FF^{k,\infty,0}_{m,\beta}(\C^d, Y)$ for any (some)  nonnegative integer $k$.
\end{enumerate}
 
\end{prop}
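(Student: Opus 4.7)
The plan is to run through the three equivalences by paralleling the proof of Proposition \ref{charact F infty}, replacing the $O(1)$ estimates by $o(1)$ estimates at the appropriate places.

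For $(a) \Rightarrow (b)$, I would repeat the construction from Proposition \ref{charact F infty}: setting $\alpha = 2\beta$ and $c(\zeta) := 2^{d/m}b(2^{1/m}\zeta)e^{-\alpha|\zeta|^{2m}}$, one checks
$\|c(\zeta)\|_Y = 2^{d/m}\|b(2^{1/m}\zeta)\|_Y e^{-\frac{\beta}{2}|2^{1/m}\zeta|^{2m}},$
which tends to $0$ as $|\zeta|\rightarrow\infty$ by assumption, so $c\in\mathcal{C}_0(Y)$, and the same reproducing argument yields $b=P_{2\beta}c$.

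For $(b) \Rightarrow (a)$, fix $\epsilon>0$ and decompose $c=c_1+c_2$ with $c_1$ compactly supported in $\{|\zeta|\leq R_\epsilon\}$ and $\|c_2\|_{L^{\infty}(Y)}<\epsilon$. By Proposition \ref{charact F infty}, $\|P_{2\beta}c_2\|_{F^{\infty}_{m,\beta}(Y)}\lesssim\epsilon$. For the compactly supported piece, the asymptotics (\ref{asymp deri ML m > 1/2}) applied on $|(2\beta)^{1/m}\langle z,\zeta\rangle|\leq C R_\epsilon|z|$ give
$\|P_{2\beta}c_1(z)\|_Y\, e^{-\frac{\beta}{2}|z|^{2m}}\lesssim \mathrm{poly}(|z|)\, e^{C R_\epsilon^m |z|^m-\frac{\beta}{2}|z|^{2m}}\longrightarrow 0$
as $|z|\rightarrow\infty$, so $P_{2\beta}c_1\in F^{\infty,0}_{m,\beta}(Y)$. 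Since $F^{\infty,0}_{m,\beta}(Y)$ is closed in $F^{\infty}_{m,\beta}(Y)$ and $\epsilon$ was arbitrary, $b=P_{2\beta}c \in F^{\infty,0}_{m,\beta}(Y)$.

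For $(a)\Rightarrow(c)$ I would use $(a)\Rightarrow(b)$ to write $b=P_{2\beta}c$ with $c\in\mathcal{C}_0(Y)$, then differentiate the reproducing formula under the integral as in Proposition \ref{charact F infty}, expressing $\mathcal{R}^k b(z)$ as an integral of $c(\zeta)$ against derivatives of the kernel. Splitting $c=c_1+c_2$ once more, the $c_2$ part contributes at most $C\epsilon(1+|z|^{2m})^k e^{\frac{\beta}{2}|z|^{2m}}$ by Lemma \ref{int blp beta}, while the $c_1$ part is exponentially smaller as in the previous step; dividing by $(1+|z|^{2m})^k e^{\frac{\beta}{2}|z|^{2m}}$ and letting $|z|\rightarrow\infty$ then $\epsilon\rightarrow 0$ gives the desired limit. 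For $(c)\Rightarrow(a)$, an induction on $k$ reduces to the case $k=1$: reproducing the ray-integration argument of Lemma \ref{de Rdb � b} with $\|\mathcal{R}b(tz)\|_Y\leq \eta(tz)(1+|tz|^{2m})e^{\frac{\beta}{2}|tz|^{2m}}$, where $\eta(w)\to 0$ at infinity, the same computation produces $\|b(z)\|_Y e^{-\frac{\beta}{2}|z|^{2m}}\to 0$.

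I expect the main obstacle to be the compactly supported piece in $(b)\Rightarrow(a)$: one must verify carefully that, when $\zeta$ ranges over a bounded set, the Mittag--Leffler kernel $E^{(d-1)}_{1/m,1/m}((2\beta)^{1/m}\langle z,\zeta\rangle)$ grows at most like $e^{C R_\epsilon^m|z|^m}$ times polynomials, so that the Gaussian weight $e^{-\frac{\beta}{2}|z|^{2m}}$ of order $2m$ strictly dominates, uniformly in $\zeta \in \operatorname{supp} c_1$. The kernel asymptotics do give this, but one must handle the two regimes $\arg\langle z,\zeta\rangle \in I_m$ and its complement separately, as in Lemma \ref{int sphere}.
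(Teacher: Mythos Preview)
Your proposal is correct and follows essentially the same strategy as the paper: the same explicit choice of $c$ for $(a)\Rightarrow(b)$, the same split into a compactly supported piece and a small-$L^\infty$ tail for $(b)\Rightarrow(a)$ (the paper phrases it as splitting the integral over $\{|\zeta|\le R\}$ and $\{|\zeta|>R\}$ rather than invoking closedness of $F^{\infty,0}_{m,\beta}$, but the content is identical), and the same ray-integration induction for $(c)\Rightarrow(a)$. The only minor deviation is in $(a)\Rightarrow(c)$: the paper differentiates the reproducing formula $b=P_\beta b$ and splits the resulting integral in $\zeta$ according to $|\zeta|\le R_1$ versus $|\zeta|>R_1$, whereas you first pass through $(b)$ to write $b=P_{2\beta}c$ with $c\in\mathcal{C}_0(Y)$ and split $c$; both routes use the same kernel estimates from Lemma~\ref{int blp beta} and give the same conclusion.
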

\begin{proof}
%[Proof of  Proposition \ref{charact F infty 0}]
Again, we   set $\alpha=2\beta$. 

\textbf{Equivalence $(a) \Leftrightarrow (b)$}. If $b\in F^{\infty,0}_{m,\alpha/2}(Y)$, we  see from the  proof
of   Proposition \ref{charact F infty}  that $b=P_{\alpha}c$, where $c(\zeta)=2^{\frac{d}{m}}b\left(2^{1/m}\zeta\right)e^{-\alpha\left|\zeta\right|^{2m}}$ is in $\CC_0(Y)$. 

For the converse implication, we assume that $c$ is in $\CC_0(Y)$, and take  $\epsilon>0$.  There exists $R>0$ such that
$$\left\| c( \zeta ) \right\|_Y\leq\epsilon,\ \text{ whenever } \left|\zeta\right|> R.$$ 
 Now set $b=P_{\alpha}c$. For $z$  in $\C^d$, we have
$$b(z)=\int_{\left| \zeta \right|\leq R}c(\zeta)K_{m,\alpha}(z,\zeta)d\mu_{m,\alpha}(\zeta)+\int_{\left| \zeta \right|> R}c(\zeta)K_{m,\alpha}(z,\zeta)d\mu_{m,\alpha}(\zeta).  $$
Then we  use (\ref{asymp deri ML m > 1/2}) and Lemma \ref{int blp beta} to obtain
$$\left\|b(z)\right\|_Y\lesssim  \left\|c\right\|_{L^{\infty}(Y)}R^{2d}\left|Rz\right|^{d(m-1)}e^{\alpha\left|Rz\right|^m}+\epsilon e^{\frac{\alpha}{4}\left|z\right|^{2m}}  \lesssim 2\epsilon e^{\frac{\alpha}{4}\left|z\right|^{2m}},$$
as $\left|z\right|\rightarrow\infty,$ 
which shows that  $(b)\Rightarrow (a)$. Thus $(a)\Leftrightarrow (b)$ is proven.\\

%Arguments similar to those of Lemma
%\ref{de Rdb à b} show that  $(3)\Leftrightarrow(1)$.
%To prove the implication $(3)\Rightarrow (1)$,
\textbf{Implication $(c)\Rightarrow (a)$}. 
  Suppose now that    $(c)$ holds,  for  a positive integer $k$, and take $\epsilon>0$. We have 
\begin{equation}\label{radial epsilon}
	\left\|\mathcal{R}^k b(z) \right\|_{Y}\leq\epsilon \left|z\right|^{2km}e^{\frac{\alpha}{4}\left|z\right|^{2m}},\ \text{ whenever }\left|z\right|>R,
\end{equation}
 for some positive constant $R$, and 
\begin{equation}\label{radial f infty}
	\left\|\mathcal{R}^k b(z) \right\|_{Y}\lesssim\left(1+\left|z\right|\right)^{2km}e^{\frac{\alpha}{4}\left|z\right|^{2m}}\ \text{ for all }  z\in\C^d.
\end{equation}
Fix $z $  such that  $\left|z\right|>2R$ and set $a=\frac{\alpha}{4}\left|z\right|^{2m}$.  From the definition of the radial derivative, there is a  positive constant $\eta$ such  that
%$$ \mathcal{R}^k b(tz)\frac{1}{t}=\sum^{d}_{j=1}z_j \left(\partial_{j}\mathcal{R}^{k-1} b \right)(tz),$$ there exists $\eta>0$ such that if $$,
$$ \left\|\mathcal{R}^k b(tz)\frac{1}{t}\right\|_Y\lesssim |z|\ \text{ whenever }t \left|z\right|<\eta.$$
 We next write
\begin{align*}
\left\|\mathcal{R}^{k-1} b(z)-\mathcal{R}^{k-1} b(0) \right\|_{Y}
%&=\left\|\int^{1}_{0}\mathcal{R}^k b(tz)\frac{dt}{t}\right\|_{Y}\\
&\leq\left(\int^{_{\frac{\eta}{\left|z\right|}}}_{0}+\int^{1/2}_{\frac{\eta}{\left|z\right|}}+\int^{1}_{1/2}\right) \left\|\mathcal{R}^k b(tz)\right\|_{Y}\frac{dt}{t}.
% &\leq C{\eta}  +\int^{1}_{\frac{\eta}{\left|z\right|}}\left\|\mathcal{R}^k b(tz)\right\|_{Y}\frac{dt}{t},
\end{align*}
Relation  (\ref{radial f infty}) induces an  estimate for the second term 
 \begin{align*}
\int^{1/2}_{\frac{\eta}{\left|z\right|} }\left\|\mathcal{R}^d b(tz)\right\|_{Y}\frac{dt}{t}
%&\leq C\int^{1/2}_{\frac{\eta}{\left|z\right|} }\left(1+t\left|z\right|\right)^{2km}t^{-1}e^{at^{2m}}dt\\
&\lesssim\int^{1/2}_{\frac{\eta}{\left|z\right|} }\left(t\left|z\right|\right)^{2km}t^{-1}e^{at^{2m}}dt\lesssim\left|z\right|^{2km}e^{\frac{\alpha}{4^{1+m}}\left|z\right|^{2m}}.
%&\leq C'\left|z\right|^{2km}\int^{1}_{\frac{\eta}{\left|z\right|} }\frac{d}{dt}\left[\frac{1}{a}e^{at^{2m}}\right]dt\\
%&\lesssim C'\left|z\right|^{2km}\frac{1}{\left|z\right|^{2m}}e^{\frac{\alpha}{4}\left|z\right|^{2m}}\\
%&\lesssim C'\left|z\right|^{2(k-1)m}e^{\frac{\alpha}{4}\left|z\right|^{2m}},\\
\end{align*}
Now, we use (\ref{radial epsilon}) to handle the third integral
\begin{align*}
\int^{1}_{1/2}\left\|\mathcal{R}^k b(tz)\right\|_{Y}\frac{dt}{t}&\lesssim \epsilon\int^{1}_{1/2}  \left(t\left|z\right|\right)^{2km}t^{-1}e^{at^{2m}}dt\lesssim\epsilon\left|z\right|^{2(k-1)m}e^{\frac{\alpha}{4}\left|z\right|^{2m}}.
\end{align*}
  As $ \left|z\right|\rightarrow\infty$,  we thus see that
$$\left\|\mathcal{R}^{k-1} b(z) \right\|_{Y}=o(1)\left|z\right|^{2(k-1)m}e^{\frac{\alpha}{4}\left|z\right|^{2m}}. $$
By induction, we get
$$ \left\| b(z) \right\|_{Y}=o(1)e^{\frac{\alpha}{4}\left|z\right|^{2m}}, $$ 
which shows $(c)\Rightarrow(a)$.\\

 \textbf{Implication $(a)\Rightarrow (c)$}.
   Let $b$ be in $ F^{\infty,0}_{m, \beta}(Y)$ and  $\epsilon>0$. For some constant $R_1>0$, we have
$$ \left\|b(\zeta)\right\|_Y\leq  \epsilon e^{\frac{\beta}{2}\left|\zeta\right|^{2m}}\ \text{ if }\left|\zeta\right|> R_1 .$$

From the proof of Proposition \ref{charact F infty 0},
%in $F^{\infty}_{m, \beta}(Y)$,
%$$ b(z)=\int_{\C^d}b(\zeta) K_{m, \beta}(z,\zeta)d\mu_{m, \beta}(\zeta),\ z\in\C^d ,$$
 $\RR^k b(z)$ is a linear combination of the integrals
% \ref{R1 b}
\begin{align*}
\RR_l b(z):=\int_{\C^d} b(\zeta)
\left\langle z,\zeta \right\rangle^{l}
 E^{(d-1+l)}_{\frac{1}{m},\frac{1}{m}}
\left(\beta^{\frac{1}{m}}\left\langle z,\zeta\right\rangle\right)e^{-\beta\left|\zeta\right|^{2m}}dv(\zeta),\ 0\leq l \leq k.
%&=C_m\int_{\C^d}b(\zeta)\RR_z E^{(d-1)}_{\frac{1}{m},\frac{1}{m}}\left(\beta^{\frac{1}{m}}\left\langle z,\zeta\right\rangle\right)d\mu_{m, \beta}(\zeta)\\
\end{align*}
Now, Lemma \ref{int blp beta} implies that
\begin{align*}
\left\|\RR_l b(z)\right\|_{Y}
%&\leq C\int_{\C^d}\left\|b(\zeta)\right\|_Y\left|\left\langle z,\zeta\right\rangle\right|
%\left|E^{(d+1-1)}_{\frac{1}{m},\frac{1}{m}}
%\left(\beta^{\frac{1}{m}}\left\langle z,\zeta\right\rangle\right)\right|e^{-\beta\left|\zeta\right|^{2m}}dv(\zeta)\\
&\lesssim\left(\int_{\left|\zeta\right|\leq R_1}+\int_{\left|\zeta\right|> R_1}\right)\left\|b(\zeta)\right\|_Y\left|\left\langle z,\zeta\right\rangle\right|^l
\left|E^{(d-1+l)}_{\frac{1}{m},\frac{1}{m}}
\left(\beta^{\frac{1}{m}}\left\langle z,\zeta\right\rangle\right)\right|e^{-\beta\left|\zeta\right|^{2m}}dv(\zeta)\\
%&\leq C\left( \int_{\left|\zeta\right|\leq R_1}\left\|b\right\|_{F^{\infty}_{m, \beta}(\C^d,Y)} 
%\left|zR_1\right| 
%\left|E^{(d+1-1)}_{\frac{1}{m},\frac{1}{m}}
%\left(\beta^{\frac{1}{m}}\left| z\right| R_1\right)\right| 
%e^{\frac{-\beta}{2}\left|\zeta\right|^{2m}}  dv(\zeta)
% + \epsilon  \int_{\left|\zeta\right|> R_1}\left|\left\langle z,\zeta\right\rangle\right|
%\left|E^{(d+1-1)}_{\frac{1}{m},\frac{1}{m}}
%\left(\beta^{\frac{1}{m}}\left\langle z,\zeta\right\rangle\right)\right|e^{\frac{\beta}{2}\left|\zeta\right|^{2m}} dv(\zeta)\right) \\
%&\leq C'\left(\left\|b\right\|_{F^{\infty}_{m, \beta}(\C^d,Y)} \left|zR_1\right|^{1+(d+1)(m-1)}e^{\beta\left(\left|z\right|R_1\right)^m} \int_{\left|\zeta\right|\leq R_1}|e^{-\beta\left|\zeta\right|^{2m}}  dv(\zeta)+\epsilon I_{m,1,d+1,1,\frac{\beta}{2},\beta}(z)\right)\\
&\lesssim \left(\left\|b\right\|_{F^{\infty}_{m, \beta}(Y)} \left|zR_1\right|^{l+(d+l)(m-1)}e^{\beta\left(\left|z\right|R_1\right)^m} +\epsilon \left|z\right|^{2lm}e^{\frac{\beta}{2}\left|z\right|^{2m}}\right).
%&\lesssim m \beta^{\frac{d+1}{m}} \left\|b\right\| _{F^{\infty}_{m, \beta}(\C^d,Y)} \int_{\C^d}\left|\left\langle z,\zeta\right\rangle\right|\left|E^{(d+1-1)}_{\frac{1}{m},\frac{1}{m}}
%\left(\beta^{\frac{1}{m}}\left\langle z,\zeta\right\rangle\right)\right|e^{-\frac{\beta}{2}\left|\zeta\right|^{2m}}dv(\zeta)\\
%&\lesssim m \beta^{\frac{d+1}{m}} \left\|b\right\| _{F^{\infty}_{m, \beta}(\C^d,Y)} I_{m,1,d+1,1,\frac{\beta}{2},\beta}(z)\\
%&\lesssim m \beta^{\frac{d+1}{m}} \left\|b\right\| _{F^{\infty}_{m, \beta}(\C^d,Y)}\beta^{(d+1)\left(1-\frac{1}{m}\right)-d}\left|z\right|^{2(d+1-d)(m-1)+2}e^{\frac{\beta}{2}\left|z\right|^{2m}}\\
%&\lesssim m \beta \left\|b\right\| _{F^{\infty}_{m, \beta}(\C^d,Y)}\left|z\right|^{2m}e^{\frac{\beta}{2}\left|z\right|^{2m}},
\end{align*}
Then, if $\left|z\right|$ is large enough, 
%$$ \left\|b\right\|_{F^{\infty}_{m, \beta}(\C^d,Y)} \left|zR_1\right|^{1+(d+1)(m-1)}e^{\beta\left(\left|z\right|R_1\right)^m} \int_{\left|\zeta\right|\leq R_1}dv(\zeta)<\epsilon \left|z\right|^{2m}e^{\frac{\beta}{2}\left|z\right|^{2m}},  $$
\begin{align*}
\left\|\RR^k b(z)\right\|_{Y}
%&\leq C'' \left( \epsilon \left|z\right|^{2m}e^{\frac{\beta}{2}\left|z\right|^{2m}}+\epsilon \left|z\right|^{2(d+1-d)(m-1)+2}e^{\frac{\beta}{2}\left|z\right|^{2m}}\right)\\
&\lesssim\epsilon \left|z\right|^{2mk}e^{\frac{\beta}{2}\left|z\right|^{2m}},
\end{align*}
 and we obtain (c).
\end{proof}

\begin{lemma}\label{cpt necessity2}
If $h_b$ is  compact on $F^{p}_{m,\alpha}(\HH)$, then $b\in F^{\infty,0}_{m,\alpha/2}(\mathcal{K}(\HH))$. 
\end{lemma}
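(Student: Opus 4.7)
The plan is to rerun the proof of Lemma \ref{hb bd implies Rdb p} with normalized versions of the test functions $x_z,y_z$ in place of $x_0,y_0$, and to use compactness of $h_b$ together with reflexivity of $F^p_{m,\alpha}(\HH)$ to turn the bound by $\|h_b\|$ into a little-oh quantity as $|z|\to\infty$. This yields the little-oh version of condition (c) in Proposition \ref{charact F infty 0} for $\RR b$ with $k=1$, $\beta=\alpha/2$, so $b\in F^{\infty,0}_{m,\alpha/2}(\be(\HH))$; the pointwise compactness $b(z)\in\mathcal{K}(\HH)$ will be handled separately.

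Fix unit vectors $x_0\in\HH$, $y_0\in\HH^*$ and retain $x_z=u_z\otimes x_0$, with $u_z(\zeta)=2^{-d/m}C_m E_{1/m,1/m}(\alpha^{1/m}\langle\zeta,\overline z\rangle)$, and $y_z$ from Lemma \ref{hb bd implies Rdb p}. Lemma \ref{int blp beta} gives $\|u_z\|_{F^p_{m,\alpha}}\asymp|z|^{2(1-d/p)(m-1)}e^{\frac{\alpha}{2}|z|^{2m}}$ (with $m$-dependent constants), while the asymptotics (\ref{asymp deri ML m > 1/2}) of $E_{1/m,1/m}$ yield the pointwise bound $|u_z(\zeta)|\lesssim m(|z||\zeta|)^{m-1}e^{\alpha|z|^m|\zeta|^m}$; the exponential factor $e^{\alpha|z|^m(|\zeta|^m-|z|^m/2)}$ in the ratio forces $\tilde u_z(\zeta):=u_z(\zeta)/\|u_z\|_{F^p}\to 0$ as $|z|\to\infty$ for every fixed $\zeta$. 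Combined with $\|\tilde u_z\|_{F^p}=1$ and the density of reproducing kernels in $F^{p'}_{m,\alpha}$ (Proposition \ref{density kernel}), this gives $\tilde u_z\to 0$ weakly in $F^p_{m,\alpha}$.

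The crucial claim is that $\|h_b(\tilde u_z\otimes x_0)\|_{F^p_{m,\alpha}(\HH)}\to 0$ as $|z|\to\infty$, \emph{uniformly} in $\|x_0\|_\HH=1$. If this failed, I would extract $|z_n|\to\infty$ and unit $x_{0,n}\in\HH$ with $\|h_b(\tilde u_{z_n}\otimes x_{0,n})\|\geq\varepsilon_0>0$. Reflexivity of $F^p_{m,\alpha}(\HH)$ (valid for $1<p<\infty$ since $\HH$ is Hilbert) yields a weakly convergent subsequence $f_n=\tilde u_{z_n}\otimes x_{0,n}\to f_\infty$; testing against $K_{m,\alpha,\overline w}\otimes y$ gives $\langle f_n(w),y\rangle_{\HH-\HH^*}=\tilde u_{z_n}(w)\langle x_{0,n},y\rangle\to 0$ for every $(w,y)$, whence $f_\infty\equiv 0$, and compactness of $h_b$ then forces $\|h_b f_n\|\to 0$, a contradiction. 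Granted this uniform smallness, the decomposition of $\langle\RR b(2^{1/m}z)x_0,y_0\rangle$ from the proof of Lemma \ref{hb bd implies Rdb p} has main term $\langle h_b x_z,y_z\rangle=\|x_z\|\|y_z\|\langle h_b\tilde x_z,\tilde y_z\rangle$ bounded by $\|x_z\|\|y_z\|\cdot\|h_b(\tilde u_z\otimes x_0)\|=o(|z|^{2m}e^{\alpha|z|^{2m}})$ uniformly in unit $x_0,y_0$, and error terms of order $O(e^{c\alpha|z|^{2m}})$ with $c<1$, which are also $o(|z|^{2m}e^{\alpha|z|^{2m}})$. Taking the supremum over unit $x_0,y_0$ and substituting $w=2^{1/m}z$ gives $\|\RR b(w)\|_{\be(\HH)}=o(|w|^{2m}e^{\frac{\alpha}{2}|w|^{2m}})$, which is condition (c) of Proposition \ref{charact F infty 0}.

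For the pointwise compactness, I would take any bounded weakly null sequence $(e_n)\subset\HH$. By Proposition \ref{dual Fp 1 leq p}, the constant functions $\zeta\mapsto e_n$ are weakly null in $F^p_{m,\alpha}(\HH)$; the reproducing formula identifies $h_b(e_n)$ with $\zeta\mapsto b(\zeta)e_n$; compactness of $h_b$ gives $\|h_b(e_n)\|_{F^p}\to 0$; and the pointwise estimate of Proposition \ref{pointwise Fp} yields $\|b(z)e_n\|_\HH\to 0$ for every $z$, so $b(z)\in\mathcal{K}(\HH)$. The principal obstacle in the argument above is the upgrade from pointwise-in-$x_0$ to uniform-in-$x_0$ smallness of $\|h_b(\tilde u_z\otimes x_0)\|$; this is the one step that genuinely requires compactness (rather than boundedness) of $h_b$, and is handled by the reflexivity-based weak-subsequence argument.
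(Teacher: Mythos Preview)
Your approach is essentially the paper's own: reuse the test functions $x_z,y_z$ from Lemma~\ref{hb bd implies Rdb p}, show that their normalizations converge weakly to $0$ via the pointwise asymptotics and the density of kernel functions (Proposition~\ref{density kernel}), exploit compactness of $h_b$ to upgrade the big-oh estimate for $\langle \RR b(2^{1/m}z)x_0,y_0\rangle$ to a little-oh one, and conclude via Proposition~\ref{charact F infty 0}.

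Two differences are worth recording. First, you explicitly justify uniformity in the unit vector $x_0$ via a reflexivity/subsequence argument: if the convergence $\|h_b(\tilde u_z\otimes x_0)\|\to 0$ failed uniformly, a weakly null subsequence $\tilde u_{z_n}\otimes x_{0,n}$ in the reflexive space $F^p_{m,\alpha}(\HH)$ would contradict compactness of $h_b$. The paper's proof passes from $\langle h_b\tilde x_z,\tilde y_z\rangle_\alpha\to 0$ (established only for fixed $x_0,y_0$) directly to the operator-norm bound $\|\RR b(z)\|_{\be(\HH)}=o(\cdots)$ without spelling out this step, so your treatment is in fact more careful here. Second, for the pointwise compactness of $b(z)$ the paper proceeds indirectly: it shows each Taylor coefficient $\hat b_\nu$ is compact by testing $h_b$ on the pair $x_k(\zeta)=\zeta^{\nu}f_k$, $y_k(\zeta)=\hat b_\nu f_k$ with $f_k\rightharpoonup 0$ in $\HH$, computing $\langle h_b x_k,y_k\rangle_\alpha=s_{\alpha,\nu}\|\hat b_\nu f_k\|_\HH^2$, and then uses the norm convergence of the Taylor series; your route through constant functions $e_n\rightharpoonup 0$, the identity $h_b(e_n)=b(\cdot)e_n$, and the pointwise estimate of Proposition~\ref{pointwise Fp} is a valid and more direct alternative.
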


\begin{proof}
If $h_b$ is  compact, then it is bounded, and we have shown that $b\in F^{\infty}_{m,\alpha/2}(\mathcal{B}(\HH))$. Let us prove that the Taylor coefficients $\hat  b_{\nu} $ of $b$ are compact operators on $\HH$. Fix $\nu_0$ in $\N^d$, and consider a sequence $(f_k)_{k\in\N}$ in $\HH$ which converges weakly to $0$ as $k\rightarrow\infty$. For each integer $k$, set 
$$x_k(\zeta)= \zeta^{\nu_0}f_k,\qquad  y_k(\zeta)=\hat  b_{\nu_0}f_k,\qquad\zeta\in\C^d.$$
The sequences $(x_k)$ (resp. $(y_k)$) converge weakly to $0$ in $F^{p}_{m,\alpha}(\C^d,\HH)$ (resp. in $F^{p'}_{m,\alpha}(\C^d,\HH)$).
% Indeed, for $y\in F^{p'}_{m,\alpha}(\C^d,\HH)$, 
%$$ \left\langle x_k,y\right\rangle_{F^{p}(\HH)-F^{p}(\HH)}=\int_{\C^d} \left\langle  f_k,y(\zeta)\right\rangle_{\HH} \zeta^{\nu_0} d\mu_{m,\alpha}(\zeta).$$
%Since  for each $\zeta\in\C^d$,
%$$\lim_{k}\zeta^{\nu_0}\left\langle  f_k, y(\zeta)\right\rangle_{\HH}e^{-\alpha\left|\zeta\right|^{2m}}=0$$
 %and
%$$\left|\left\langle f_k, y(\zeta)  \right\rangle_{\HH}\zeta^{\nu_0}\right| e^{-\alpha\left|\zeta\right|^{2m}}\leq\left\|y(\zeta)\right\|_{\HH} \left\|f_k\right\|_{\HH}\left|\zeta^{\nu_0}\right|e^{-\alpha\left|\zeta\right|^{2m}},$$
%by H\"{o}lder inequality and the dominated convergence theorem, we have that $\left\langle x_k,y\right\rangle_{ F^{p}(\HH)-F^{p}(\HH)  }\rightarrow 0$. Similarly, for $x\in F^{p}_{m,\alpha}(\C^d,\HH)$, 
%$$ \left\langle x,y_k\right\rangle_{ F^{p}(\HH)-F^{p}(\HH)  }= \int_{\C^d} \left\langle x(\zeta), \hat  b_{\nu_0} f_k\right\rangle_{\HH} d\mu_{m,\alpha}(\zeta)\rightarrow 0.$$
The compactness of $h_b$ implies that $(h_b x_k)_k$ converges strongly to 0 in $F^{p}_{m,\alpha}(\HH) $, and thus $\left(\left\langle h_b x_k,y_k \right\rangle\right)_k$ converges to 0 as $k\rightarrow\infty.$ Simple computations show that $\left\langle h_b x_k,y_k \right\rangle_{\alpha}=\left\|\hat  b_{\nu_0} f_k\right\|^{2}_{\HH}s_{\alpha,\nu_0}$. 
%\begin{align*}
%	\left\langle h_b x_k,y_k \right\rangle_{F^{p}(\HH)-F^{p}(\HH) }&=\int_{\C^d} \left\langle b(\zeta) \overline \zeta^{\nu_0} f_k, \hat  b_{\nu_0} f_k\right\rangle_{\HH} d\mu_{m,\alpha}(\zeta)\\
%	&=\int_{\C^d} \left\langle \hat  b_{\nu_0}  \zeta^{\nu_0}\overline \zeta^{\nu_0} f_k, \hat  b_{\nu_0} f_k\right\rangle_{\HH} d\mu_{m,\alpha}(\zeta)\\
%	&=\int_{\C^d}\left\|\hat  b_{\nu_0} f_k\right\|^{2}_{\HH}\left|\zeta^{\nu_0}\right|^{2}d\mu_{m,\alpha}(\zeta)\\
%	&=\left\|\hat  b_{\nu_0} f_k\right\|^{2}_{\HH}s_{\alpha,\nu_0},
%\end{align*} 
Therefore, $ \hat b_{\nu_0} f_k\rightarrow 0$ in $\HH$, which proves that $\hat b_{\nu_0}$ is compact on $\HH$, for all multiindex $\nu_0$. Thus, $b$ is a  $\mathcal{K}(\HH)$-valued entire function. It remains to prove that 
$$ \lim_{\left|z\right|\rightarrow\infty} \left\|b(z)\right\|_{\be(\HH)}e^{-\frac{\alpha}{4}\left|z\right|^{2m}}=0.$$
For  $x_0,y_0\in \HH$, $z\in \C^d,$ we shall use the test functions defined in (\ref{test fct}),
%$x_z,y_{z}$ are defined in (\ref{test fct}).
\begin{gather*}
x_z(\zeta)=2^{-\frac{d}{m}}C_m E\left(\alpha^{1/m}\left\langle \zeta,\overline{z}\right\rangle\right)x_0,\\
%y_{l,z}(\zeta)=G_l\left(\alpha^{1/m}\left\langle \zeta,z\right\rangle\right)y_0,\  0\leq l\leq d,\text{ and }
y_z(\zeta)= \sum^{d}_{l=0}a_{l} G_l\left(\alpha^{1/m}\left\langle \zeta,z\right\rangle\right)y_0,\ \zeta\in \C^d.
\end{gather*}
As $|z|\rightarrow\infty$, they satisfy the estimates 
%(see Lemma \ref{int blp beta})
%(see (\ref{esti xz}) and (\ref{esti yz}))
\begin{align*}
%\label{esti test}
\left\|x_z\right\|_{F^{p}_{m,\alpha}(\HH)}&\asymp  \left\|x_0\right\| \left|z\right|^{2(1-\frac{d}{p})(m-1)}e^{\frac{\alpha}{2}\left|z\right|^{2m}},\\
\left\|y_{z}\right\|_{F^{p'}_{m,\alpha}(\HH)}&\asymp  \left\|y_0\right\|\left|z\right|^{2\frac{d}{p}\left(m-1\right)+2}e^{\frac{\alpha}{2}\left|z\right|^{2m}}.
%\text{ as }|z|\rightarrow\infty.
\end{align*}
For some constant  $1/2<c<1$, we have
\begin{multline}\label{Rd b cpct}
\left\langle \mathcal{R} b(2^{1/m}z)x_0,y_0\right\rangle_{\HH}=\left\langle h_b x_z,y_{z}  \right\rangle_{\alpha}
+O(1) \left\|x_0\right\|\left|y_0\right\| \left\|b\right\|_{F^{\infty}_{m,\alpha}\left( \be(\HH)\right)}e^{{c\alpha}\left|z\right|^{2m}}.
\end{multline}

%$x_z,y_{z}$ are defined in (\ref{test fct}).

The weak convergence is denoted by $\rightharpoonup$. We now show that the unit vectors
\begin{align*}
 \tilde x_z&:=\frac{x_z}{\left\|x_z\right\|_{ F^{p}(\HH)  }}\rightharpoonup 0\qquad \text{in } F^{p}_{m,\alpha}(\HH)\\
\text{ and }\tilde y_{z}&:=\frac{y_{z}}{\left\|y_{z} \right\|_{F^{p'}(\HH)} }\rightharpoonup 0\qquad \text{in } F^{p'}_{m,\alpha}(\HH)\text{ as }\left|z\right|\rightarrow\infty.
\end{align*}

  The functions $e_{w,a} (\zeta)=K_{m,\alpha}(\zeta,w)a,\ \text{ for } w\in\C^d,\  a\in \HH, $
%$$e_{w,a} (\zeta)=K_{m,\alpha}(\zeta,w)a,\ \text{ for} w\in\C^d,\  a\in \HH, $$
 induce  bounded linear functionals on $F^{p}_{m,\alpha}(\HH)$ (resp. $F^{p'}_{m,\alpha}(\HH)$), and span a dense subspace in  $F^{p}_{m,\alpha}(\HH)$ (resp. $F^{p'}_{m,\alpha}(\HH)$) by   Proposition \ref{density kernel}. Thus, it is sufficient to prove that
\begin{gather*}
 \left\langle \tilde x_z, e_{w,a}\right\rangle_{\alpha}=\left\langle\tilde x_z(w),a \right\rangle_{\HH}\text{ and }
\left\langle \tilde y_{z}, e_{w,a}\right\rangle_{\alpha}=\left\langle\tilde y_{z}(w),a \right\rangle_{\HH}\text{ , } 
\end{gather*}
tend to $0$ when $\left|z\right|\rightarrow\infty$, and this is true by relation (\ref{asymp deri ML m > 1/2}) and the estimates of $\left\|x_z\right\|_{F^{p}_{m,\alpha}(\HH)}$ and $\left\|y_{z}\right\|_{F^{p'}_{m,\alpha}(\HH)}$. \\

As  $\left|z\right|\rightarrow\infty$,  the compactness of $h_b $ ensures that
$ \left\langle h_b \tilde x_z, \tilde y_{z}\right\rangle_{\alpha}\rightarrow 0,$
 or equivalently
\begin{gather*}
\left\langle h_b  x_z, y_{z}\right\rangle_{\alpha}=o(1)  \left\|x_z\right\|_{F^{p}_{m,\alpha}(\HH)} \left\|y_{z}\right\|_{F^{p'}_{m,\alpha}(\HH)}.
%\left\langle h_b  \xi_z, \eta_{z}\right\rangle_{F^{p}(\HH)-F^{p'}(\HH)}=o(1)  \left\|\xi_z\right\|_{F^{p}(\HH)} \left\|\eta_{l,z}\right\|_{F^{p'}(\HH)},
\end{gather*}
By (\ref{Rd b cpct}), (\ref{esti xz}) and (\ref{esti yz}), we get
$$\left\langle \mathcal{R} b(2^{1/m}z)x_0,y_0\right\rangle_{\HH}= o(1)  \left\|x_0\right\| \left\|y_0\right\|\left|z\right|^{2m}e^{\alpha\left|z\right|^{2m}},$$
and therefore

\begin{equation}\label{Rd b=o()}
	\left\|\mathcal{R} b(z) \right\|_{\be(\HH)}= o(1)  \left|z\right|^{2m}e^{\frac{\alpha}{4}\left|z\right|^{2m}}.
\end{equation}
We conclude by using Proposition \ref{charact F infty 0}.
\end{proof}

Theorem B follows from Lemmas \ref{cpt sufficiency}, \ref{cpt necessity} and \ref{cpt necessity2}.

\section{Further remarks}

We have characterized the boundedness of Hankel operators on $F^{p}_{m,\alpha}(\C^d,\HH)$, $1\leq p<\infty$, as well as their compactness when $1<p<\infty$.
Because of the lack of information on the dual of $F^{\infty}_{m,\alpha}(\C^d,\HH)$, our methods do not apply to  study  $h_b$ on $F^{p}_{m,\alpha}(\C^d,\HH)$  when $p\in\left\{1,\infty\right\}$. It would be interesting to study these cases.\\
When $X$ is a Banach space, we have shown that a necessary condition for $h_b$ to be bounded on $F^{p}_{m,\alpha}(\C^d,X)$ is that $b$ is in $F^{\infty}_{m,\alpha/2}(\C^d,\be(X))$.
The question of knowing whether the converse is true in the non Hilbert case remains open. \\

{\it Acknowledgements.} We would like to thank N. Nikol'skii for suggesting to us the method of proof of Proposition \ref{density pol in F}, and E.-H. Youssfi 
for useful discussions.

\end{document}